\theoremstyle{plain}
\newtheorem{thm}{Theorem}[section]
\newtheorem{lem}[thm]{Lemma}
\newtheorem{cor}[thm]{Corollary}
\newtheorem{prop}[thm]{Proposition}
\newtheorem{conj}[thm]{Conjecture}
\theoremstyle{definition}
\newtheorem{dfn}[thm]{Definition}
\newtheorem{Q}[thm]{Question}
\newtheorem{claim}[thm]{Claim}
\def\dnfo{\;\raise.2em\hbox{$\mathrel|\kern-.9em\lower.4em\hbox
{$\smile$}$}}
\def\dnf#1{\lower.9em\hbox{$\buildrel\dnfo\over{ \scriptstyle  #1}$}}
\def\dfo{\;\raise.2em\hbox{$\mathrel|\kern-.9em\lower.4em\hbox{$\smile$}
\kern-.72em\lower.07em\hbox{\char'57}$}\;}
\def\df#1{\lower1em\hbox{$\buildrel\dfo\over{\scriptstyle #1}$}}
\newcommand{\sA}{\mathcal{A}}
\newcommand{\sB}{\mathcal{B}}
\newcommand{\sC}{\mathcal{C}}
\newcommand{\sH}{\mathcal{H}}
\newcommand{\sM}{\mathcal{M}}
\newcommand{\sO}{\mathcal{O}}
\newcommand{\sT}{\mathcal{T}}
\newcommand{\converge}{\!\!\downarrow}
\newcommand{\nat}{\in\omega}
\newcommand{\seq}[1]{\langle #1 \rangle}
\newcommand{\+}[1]{\mathcal{#1}}
\renewcommand{\*}[1]{\mathbf{#1}}
\title{Failure Modes for Structural Highness Notions}
\author[Calvert]{Wesley Calvert}
\address[Calvert]{School of Mathematical and Statistical Sciences\\ Mail Code 4408\\ 1245 Lincoln Drive\\ Southern Illinois University\\ Carbondale, Illinois 62901\\USA}
\email{wcalvert@siu.edu}
\urladdr{http://lagrange.math.siu.edu/calvert}
\author[Franklin]{Johanna N.Y.\ Franklin}
\address[Franklin]{Department of Mathematics \\ Room 306, Roosevelt Hall \\ Hofstra University \\ Hempstead, NY 11549-0114 \\ USA}
\email{johanna.n.franklin@hofstra.edu}
\urladdr{http://www.johannafranklin.net}
\author[Turetsky]{Dan Turetsky}
\address[Turetsky]{School of Mathematics and Statistics \\ Victoria University of Wellington \\ Wellington \\ New Zealand}
\email{dan.turetsky@vuw.ac.nz}
\thanks{The work of the first and second authors was partially supported by NSF grant DMS-2404023.  The third author was supported by Marsden grant 20-VUW-104.}
\date{\today}
\begin{document}

\begin{abstract}
In \cite{CFT2023}, we defined several classes of degrees that are high in senses related to computable structure theory.  Each class of degrees is characterized by a structural feature (e.g., an isomorphism) that it can compute if such a feature exists.  In this paper, we examine each of these classes and characterize them based on what they do if no such object exists.

We describe, in particular, reticent, loquacious, and collegiate senses of being high.  These, respectively, reflect the case where a computation from the degree can give output only if the desired feature exists, the case where it will give output of some kind whether or not the feature exists, and the case where the degree will either compute the feature or the best available approximation to it.
\end{abstract}

\maketitle

\section{Introduction}

If we tell you a joke, our intended outcome is that you laugh.  If we are successful, that is exactly what will happen.  On the other hand, there are many possible outcomes if we are unsuccessful in telling a joke.  You could groan, have no reaction, or even become angry.\footnote{This analogy was suggested to the first author by communications scholar Jason Jarvis and is explored further in \cite{Calvert2011}.}

In \cite{CFT2023}, we described degrees that are high in certain structural senses, for instance, high for isomorphism.  A degree $\mathbf{d}$ is high for isomorphism if for any computable structures $\sA$ and $\sB$ that are actually isomorphic, $\mathbf{d}$ computes an isomorphism between them.  This is success.  In this paper, we explore the ways in which such a degree $\mathbf{d}$ can be unsuccessful --- that is, what happens if $\sA$ and $\sB$ are \emph{not} isomorphic.

In giving talks about \cite{CFT2023}, the authors have often been asked about these failure modes of degrees which are high for isomorphism.  Several particular modes have been proposed in these questions.  We focus on three of them. Degrees that are \emph{reticently} high for isomorphism, previously mentioned in \cite{CFT2023}, are those which will produce an isomorphism if the structures in question are isomorphic but nothing at all if they are not. Degrees that are \emph{loquaciously} high for isomorphism will always produce a purported isomorphism whether or not the structures in question are isomorphic. Finally, degrees that are \emph{collegiate} at a level $\alpha$ will produce functions which are plausibly approximate isomorphisms at this level.

\subsection{Background and definitions}

Our notation is standard; we refer the reader to \cite{ashknight} for general background on computable structure theory. The following definitions were given in \cite{CFT2023}.

\begin{dfn}
We call a degree $\mathbf{d}$ {\em high for paths} if for every nonempty $\Pi^0_1$ class $\+ P$ on Baire space,\footnote{If we restricted the $\Pi^0_1$ classes to Cantor space, these would be better known as the PA degrees, which are well studied.} the degree $\mathbf{d}$ computes an element of $\+ P$. Similarly, a degree $\mathbf{d}$ is called {\em uniformly high for paths} if there is a $D \in \mathbf{d}$ and a total computable $f$ such that for every nonempty $\Pi^0_1$ class of functions $\+ P_i$, the function $\{f(i)\}^D$ is an element of $\+ P_i$.
\end{dfn}

Note that instead of phrasing this in terms of computing elements of nonempty $\Pi^0_1$ classes, we could phrase this as computing paths through ill-founded trees on $\omega^{<\omega}$.  We will use both of these interpretations throughout the paper.

\begin{dfn}
We call a degree $\mathbf{d}$ {\em high for isomorphism} if for any two computable structures $\+ M$ and $\+ N$ with $\+ M \cong \+ N$, there is a $\mathbf{d}$-computable isomorphism from $\+ M$ to $\+ N$. Similarly, we call a degree $\mathbf{d}$ {\em uniformly high for isomorphism} if there is a $D \in \mathbf{d}$ and a total computable $f$ such that for any computable structures $\+M_i \cong \+M_j$, the function $\{f(i,j)\}^D$ is an isomorphism from $\+M_i$ to $\+M_j$.
\end{dfn}

We made limited use in \cite{CFT2023} of a local version of these notions, which we will use more freely here.

\begin{dfn}
    Given a class $K$ of countable structures, we call a degree $\mathbf{d}$ \emph{high for isomorphism for $K$} if for any two $\+ M, \+ N \in K$ with $\+ M \cong \+ N$, there is a $\mathbf{d}$-computable isomorphism from $\+ M$ to $\+ N$.  We call a degree $\mathbf{d}$ \emph{uniformly high for isomorphism for $K$} under the natural uniformization of these conditions.
\end{dfn}

Now we turn our attention to the ability to compute infinite descending sequences within ill-founded linear orders.

\begin{dfn}
If $\+L$ is a linear order, a {\em tight descending sequence} in $L$ is an infinite descending sequence which is unbounded below in the ill-founded part of $\+L$, i.e., if $f$ is a tight descending sequence and $g$ is any descending sequence, then for every $n$ there is an $m$ with $f(m) <_{\+L} g(n)$.
\end{dfn}

\begin{dfn}
A degree $\mathbf{d}$ is {\em high for (tight) descending sequences} if any computable ill-founded linear order $\+L$ has a $\mathbf{d}$-computable (tight) descending sequence.
\end{dfn}

Degrees that are uniformly high for (tight) descending sequences are defined analogously.

\subsection{Plan of the Paper} For various notions of structural highness, we consider three variants.  In Section \ref{SecReticent}, we describe degrees which are high in a \emph{reticent} sense, including a review of some material from \cite{CFT2023}, but adding a new result on when reticent highness for isomorphism for a class implies reticent highness for isomorphism in general.

In Section \ref{SecLoquacious}, we consider a new variant, the degrees which are \emph{loquaciously} high --- that is, the highness is witnessed by a total functional in a way we will make precise.  This section leaves open two problems that we consider very interesting.

Section \ref{SecCollegiate} describes a form of graceful degradation in the form of degrees which are \emph{collegiately} high.  These degrees, if unable to compute what we had hoped they might, will compute some approximation to it.  The degrees are graded by the quality of the approximation.  We find that, globally, the hierarchy of notions collapses, but that at least on Abelian $p$-groups the hierarchy is proper.

This leaves the reticent and loquacious variants for global nontrivial comparison.  In Section \ref{SecRelationships}, we describe implications (or lack thereof) between these notions.

In the brief Section \ref{SecNonlow}, we show that all notions presented in the paper collapse for nonlowness for isomorphism.

Finally, Section \ref{sec:other.stuff} collects additional results on structural highness notions that many colleagues have asked about but that do not seem likely to be published elsewhere in the short term.

\section{Degrees that will never speak unless they have something to say}\label{SecReticent}

We begin by recalling the definition of reticence from \cite{CFT2023}. This definition is written here in full generality so that we can later apply it to highness in different contexts.

\begin{dfn}
Let $\*d$ be a Turing degree which is uniformly high for a given notion $\+N$; that is, there is some $I \subseteq \omega$ (depending on $\+N$) and a $\*d$-computable function $F:\omega\rightarrow\omega^\omega$ such that if $i \in I$, then $F(i)$ has some $\+N$-appropriate relation to $i$.

We say that $\*d$ is {\em reticently high for $\+N$} if $F$ can be chosen such that for every $i \not \in I$, the partial function $F(i)$ diverges for all inputs.
\end{dfn}

This notion is the same as the notion we called \emph{uniformly high for $\+N$ in the reticent sense} in \cite{CFT2023}. We choose a shorter term here because there is not the same need to emphasize uniformity in the context of this paper. 

Now we give an example. If we are considering uniform highness for paths, we will have an effective enumeration $(\mathcal{P}_i : i\in\omega)$ of $\Pi^0_1$ classes. Our $I$ will be the set of indices for nonempty $\Pi^0_1$ classes, and $F(i)$ will be an element of the $i^{th}$ $\Pi^0_1$ class if such an element exists. If no such element exists, then $F$ diverges at $i$. Similarly, if we are considering uniform highness for isomorphism, we will have an effective enumeration $( (\sA_i, \sB_i) : i\in\omega)$ of pairs of computable structures, and our $I$ will be the set of $i$ such that $\sA_i$ and $\sB_i$ are isomorphic. Our $F(i)$ will then be an isomorphism from $\sA_i$ to $\sB_i$ if one exists; if $\sA_i$ and $\sB_i$ are not isomorphic, then $F$ diverges at $i$.

In \cite{CFT2023}, we found the following characterizations of reticent highness for various classes:

\begin{thm}[\cite{CFT2023}, Theorem 5.2]\label{thm:reticence_all_the_same}
The following properties of a degree $\mathbf{d}$ are equivalent:
\begin{enumerate}
    \item $\mathbf{d}$ enumerates all $\Sigma^1_1$ sets.
    \item $\mathbf{d}$ is reticently high for paths.
    \item $\mathbf{d}$ is reticently high for isomorphism.
    \item $\mathbf{d}$ is reticently high for descending sequences.
    \item $\mathbf{d}$ is reticently high for tight descending sequences.
    \item $\mathbf{d}$ is uniformly high for tight descending sequences.
\end{enumerate}
\end{thm}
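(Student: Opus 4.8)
The plan is to prove the cycle of equivalences by establishing the implications $(1)\Rightarrow(2)\Rightarrow(3)\Rightarrow(4)\Rightarrow(5)\Rightarrow(6)\Rightarrow(1)$, using the $\Sigma^1_1$-enumeration property as the ``hub'' and exploiting the fact that the relevant existence questions (nonemptiness of a $\Pi^0_1$ class, isomorphism of two computable structures, ill-foundedness of a computable linear order) are all $\Sigma^1_1$, and moreover \emph{uniformly} so. First I would recall the standard fact that each of these index sets is $\Sigma^1_1$-complete and that the witnessing objects (paths, isomorphisms, descending sequences) can themselves be extracted uniformly from a $\Sigma^1_1$-enumeration: if $\*d$ can enumerate the $\Sigma^1_1$ set $\{(i,\sigma) : \sigma$ extends to an element of $\+P_i\}$, then $\*d$ can, for each $i$ in the nonemptiness index set, build a path by repeatedly asking which one-step extensions lie in this set, and the search never terminates (so $F(i)$ diverges everywhere) precisely when $\+P_i=\emptyset$. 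This handles $(1)\Rightarrow(2)$, and the analogous bookkeeping with the Ash--Knight style $\Sigma^1_1$ descriptions of ``$\exists$ isomorphism'' and ``$\exists$ descending sequence'' gives $(1)\Rightarrow(3)$ and $(1)\Rightarrow(4)$; in fact one can route all three through $(2)$ via the standard effective reductions (coding the isomorphism problem and the descending-sequence problem as $\Pi^0_1$-class problems on Baire space) and observe that these reductions preserve divergence-off-$I$, which is the whole point of reticence.

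Next I would treat the implications among the descending-sequence notions. The implication $(4)\Rightarrow(5)$ is the interesting structural step: from a reticent procedure producing \emph{some} descending sequence, I want to produce a \emph{tight} one. The idea is the usual trick of working with modified linear orders $\+L \cdot n$ or the ``$\omega$-sum below'' constructions: to force tightness, one builds, for the given computable ill-founded $\+L$, an auxiliary computable family of linear orders whose ill-founded descending sequences, when they exist, are cofinal in the ill-founded part of $\+L$; feeding these to the reticent descending-sequence functional and amalgamating yields a tight descending sequence, and if $\+L$ is actually well-founded every auxiliary order is too, so everything diverges as required. Then $(5)\Rightarrow(6)$ is immediate since a reticent witness is in particular a witness, i.e.\ reticent highness for tight descending sequences trivially entails uniform highness for tight descending sequences (the reticent $F$ is a fortiori a uniform one).

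The main obstacle I expect is the closing implication $(6)\Rightarrow(1)$: upgrading \emph{uniform} (not necessarily reticent) highness for tight descending sequences all the way to enumerating every $\Sigma^1_1$ set. The key insight to exploit here is that \emph{tightness is self-certifying in a $\Sigma^1_1$ way}: given a computable linear order $\+L$ coding a $\Sigma^1_1$ fact, the functional $\{f(i)\}^D$ is promised to be a tight descending sequence \emph{whenever} $\+L$ is ill-founded, and one can arrange the coding so that the mere convergence of $\{f(i)\}^D$ on enough inputs — or the behavior of the approximation to it — already reveals whether the $\Sigma^1_1$ fact holds. Concretely, I would take an arbitrary $\Sigma^1_1$ set $S$, fix a uniformly computable sequence of linear orders $\+L_n$ with $\+L_n$ ill-founded iff $n\in S$, but designed (via a Harrison-order-style construction with a controllable well-founded part) so that tightness pins down the truth of $n\in S$: either the sequence genuinely descends into the ill-founded part (so $n\in S$), or, because tightness would be violated, the functional cannot have been correct, which for a \emph{uniformly} high degree can only happen when $\+L_n$ is well-founded. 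Turning this ``can only happen when'' into an actual $\*d$-computable enumeration of $S$ — i.e., extracting a positive, $\Sigma^1_1$-style enumeration rather than merely a $\Pi^1_1$-flavored test — is the delicate part, and I would expect to need a careful priority/waiting argument that enumerates $n$ into (the $\*d$-computable approximation of) $S$ once the tight descending sequence functional has demonstrated descent past a prescribed cofinal sequence of stages. Everything else is routine uniformization bookkeeping.
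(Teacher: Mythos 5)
Most of your plan is sound and matches the standard argument: $(1)$ implies each reticent notion because the relevant existence problems (``$\sigma$ extends to a path of $T_i$'', ``$p$ extends to an isomorphism'', ``$a$ lies in the ill-founded part of $\+L$'') are uniformly $\Sigma^1_1$, and a $\*d$-enumeration of these sets lets you grow the witness step by step, diverging from the outset when the index is bad; your $(4)\Rightarrow(5)$ via the auxiliary family $\+L_{<a}$ is exactly right (reticence turns convergence on $\+L_{<a}$ into a positive certificate that $a$ is in the ill-founded part, and you interleave the witnessing sequences); and $(5)\Rightarrow(6)$ is indeed definitional. Two problems remain. The minor one is logical bookkeeping: as actually written you derive $(3)$ only as a \emph{consequence} of $(1)$ (or of $(2)$), and never show $(3)$ implies anything, so the equivalence is not closed. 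The easy repair is to note that for each reticent notion the converse to $(1)$ is immediate: by $\Sigma^1_1$-completeness of the isomorphism problem (resp.\ nonemptiness, ill-foundedness), given a $\Sigma^1_1$ set $X$ one gets uniformly computable pairs $(\sA_i,\sB_i)$ with $\sA_i\cong\sB_i$ iff $i\in X$, and reticence means that convergence of $F(i)$ on any input is a finite, $\*d$-r.e.\ certificate that $i\in X$ --- this is precisely the argument of Theorem \ref{RHFIComplete}.

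The substantive gap is $(6)\Rightarrow(1)$, which you correctly flag as delicate but then propose to handle with a ``priority/waiting argument'' that enumerates $n$ once the functional ``has demonstrated descent past a prescribed cofinal sequence of stages.'' Applied to $\+L_n$ itself this cannot work: when $\+L_n$ is well-founded a merely uniform functional may output arbitrary garbage, and when $\+L_n$ is ill-founded the only thing you can observe is that the output keeps descending, which never yields a \emph{finite} positive certificate --- correctness of the output is a $\Pi$-type event, and no amount of waiting converts it into one. The actual trick is to change the input so that the promise always applies: set $\+M_n = \+L_n + \+H$ with $\+L_n$ as an initial segment and a Harrison order $\+H$ on top. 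Then $\+M_n$ is always ill-founded, so the uniform functional always produces a genuine tight descending sequence. If $\+L_n$ is well-founded, no descending sequence of $\+M_n$ can enter the initial segment $\+L_n$ (its tail would be an infinite descending sequence in $\+L_n$), so the output stays in $\+H$; if $\+L_n$ is ill-founded, the ill-founded part of $\+M_n$ meets $\+L_n$, and tightness forces the output below some element of $\+L_n$, hence into $\+L_n$. Thus $n\in S$ iff the output on $\+M_n$ ever produces an element of the $\+L_n$-part, a $\Sigma^0_1(\*d)$ event, and you enumerate $S$ with no priority machinery at all. Note also that this is the one implication where tightness is genuinely indispensable: an arbitrary descending sequence of $\+M_n$ could stay in $\+H$ even when $\+L_n$ is ill-founded.
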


This is notable because very few of these notions coincide in the nonreticent sense, uniformly or not. For instance, highness for paths and highness for isomorphism coincide \cite{CFT2023}, but neither highness for isomorphism nor highness for tight descending sequences implies the other (see Section \ref{sec:other.stuff}).

We now characterize reticently high for isomorphism in terms of reticence for a particular type of class of structures.

\begin{thm}\label{RHFIComplete} If $K$ is a class of structures with a $\Sigma^1_1$-complete isomorphism problem and $\mathbf{d}$ is reticently high for isomorphism for $K$, then $\mathbf{d}$ is reticently high for isomorphism.
\end{thm}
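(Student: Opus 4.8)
The plan is to avoid building a global reticent witness by hand and instead reduce everything to the characterization in Theorem~\ref{thm:reticence_all_the_same}: I will show that $\mathbf{d}$ enumerates every $\Sigma^1_1$ set, and then invoke the implication $(1)\Rightarrow(3)$ of that theorem. This is cleaner than attempting to produce the isomorphism directly, since transferring an isomorphism back along a $\Sigma^1_1$-reduction is exactly the work already packaged into Theorem~\ref{thm:reticence_all_the_same}.

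First I would unpack the hypothesis into a usable form. By assumption there are $D \in \mathbf{d}$ and a total computable $f$ such that, whenever $\+M_i, \+M_j \in K$, the functional $\{f(i,j)\}^D$ is a total isomorphism from $\+M_i$ onto $\+M_j$ if $\+M_i \cong \+M_j$, and diverges on every input if $\+M_i \not\cong \+M_j$. In particular, for any pair of indices $i,j$ that are guaranteed to name members of $K$, we have $\{f(i,j)\}^D(0)\downarrow$ if and only if $\+M_i \cong \+M_j$. The key observation is that $\{f(i,j)\}^D(0)\downarrow$ is a $\Sigma^0_1(D)$ event, so $\mathbf{d}$ can enumerate the set of such certified-in-$K$ pairs that are isomorphic.

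Next, let $S$ be an arbitrary $\Sigma^1_1$ set. Because $K$ has a $\Sigma^1_1$-complete isomorphism problem, there is a total computable function $n \mapsto (a_n, b_n)$ with $\+M_{a_n}, \+M_{b_n} \in K$ for every $n$ and $n \in S \iff \+M_{a_n} \cong \+M_{b_n}$ (if one's preferred formulation of completeness pins the second coordinate to a single fixed structure $\+B \in K$, take $b_n$ to be a fixed index for $\+B$). Composing with the previous paragraph, $n \in S \iff \{f(a_n,b_n)\}^D(0)\downarrow$; dovetailing these $D$-computations over all $n$ and enumerating $n$ when $\{f(a_n,b_n)\}^D(0)$ halts produces a $D$-computable enumeration of $S$. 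Hence $\mathbf{d}$ enumerates $S$, and since $S$ was arbitrary, $\mathbf{d}$ enumerates all $\Sigma^1_1$ sets. Theorem~\ref{thm:reticence_all_the_same} then gives that $\mathbf{d}$ is reticently high for isomorphism.

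Since the argument is short, there is no single hard obstacle; the points needing care are (i) ensuring the reduction from $S$ lands inside $K$, so that the reticence clause of the hypothesis genuinely applies to the pairs $(\+M_{a_n},\+M_{b_n})$ when they are nonisomorphic --- this is why I phrase $\Sigma^1_1$-completeness as reducibility to isomorphism of structures certified to lie in $K$ --- and (ii) confirming that the single bit recording whether $\{f(i,j)\}^D(0)$ converges is genuinely enumerable from $\mathbf{d}$. The substantive content has been off-loaded onto Theorem~\ref{thm:reticence_all_the_same}, and one might remark afterward that the same scheme shows reticent highness for isomorphism for $K$ is in fact \emph{equivalent} to reticent highness for isomorphism whenever $K$'s isomorphism problem is $\Sigma^1_1$-complete, since the converse direction is immediate.
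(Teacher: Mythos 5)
Your argument is correct and is essentially identical to the paper's proof: both reduce an arbitrary $\Sigma^1_1$ set to the isomorphism problem for $K$ via a uniformly computable sequence of pairs from $K$, enumerate the set by watching for convergence of the reticent witness, and then invoke Theorem~\ref{thm:reticence_all_the_same} to conclude.
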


\begin{proof}  Suppose that $\mathbf{d}$ is reticently high for isomorphism for $K$ and that $K$ is a class of structures with a $\Sigma^1_1$-complete isomorphism problem.  By Theorem \ref{thm:reticence_all_the_same}, it suffices to show that $\mathbf{d}$ enumerates all $\Sigma^1_1$ sets.  Let $X \in \Sigma^1_1$.  Now we have a uniformly computable sequence $\left((\sA_i,\sB_i):i \nat\right)$ of pairs of structures from $K$ such that $\sA_i \cong \sB_i$ if and only if $i \in X$.

Let $F:\omega \to \omega^\omega$ be a $\mathbf{d}$-computable function witnessing that $\mathbf{d}$ is reticently high for isomorphism; that is, if $i$ is the index for a pair of isomorphic computable structures, then $F(i)$ is an isomorphism, and $F$ diverges otherwise.

Then we can enumerate $X$ as follows:  When we see $F(i)$ converge, we enumerate $i \in X$.
\end{proof}

\section{Degrees that always have something to say}\label{SecLoquacious}

\begin{dfn}  Let $\*d$ be a Turing degree that is uniformly high for a given notion $\+N$ as in the previous section.  We say that $\*d$ is {\em loquaciously high for $\+N$} if $F$ can be chosen such that $F(i)$ is total for all $i \in \omega$.\end{dfn}

For instance, we say that a degree $\mathbf{d}$ is loquaciously high for paths if there is a total $\*d$-computable function $F$ such that if the computable tree $T_n$ is ill founded, then $F(n)$ gives a path, and if $T_n$ is well founded, then $F(n)$ gives some element of Baire space (not necessarily with any relation to $T_n$).

Similarly, $\*d$ is loquaciously high for isomorphism if there is a total $\*d$-computable function $F$ such that if $\+M_i, \+M_j$ are isomorphic computable structures, then $F(i, j)$ gives an isomorphism between them, and if $\+M_i, \+M_j$ are not isomorphic, then $F(i, j)$ is at least total.

Note that every PA degree is loquaciously PA.  Indeed, we could computably prepare a computably given tree on $2^{<\omega}$ so that the original tree is unchanged if infinite and changed to an infinite tree otherwise.

\begin{prop}\label{pathsvsisom}
    A degree $\mathbf{d}$ is loquaciously high for paths if and only if it is loqaciously high for isomorphism.
\end{prop}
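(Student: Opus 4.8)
The plan is to lift the proof that highness for paths and highness for isomorphism coincide (\cite{CFT2023}) to the loquacious setting, by checking that the two reductions used there are uniform and can be arranged to carry total functions to total functions --- which is exactly what the loquacious variant demands. Throughout, fix the standard effective enumerations $(T_n)$ of computable subtrees of $\omega^{<\omega}$ and $((\+M_i,\+M_j))$ of pairs of computable structures that appear in the definitions of the two highness notions.

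\emph{From loquacious highness for paths to loquacious highness for isomorphism.} Given computable structures $\+M_i$ and $\+M_j$, let $S_{i,j}\subseteq\omega^{<\omega}$ be the usual back-and-forth tree: its nodes code finite partial isomorphisms between $\+M_i$ and $\+M_j$, presented so that even levels commit to an image for the next element of $\+M_i$ and odd levels to a preimage for the next element of $\+M_j$. This tree is computable uniformly in $(i,j)$; its paths are exactly the isomorphisms $\+M_i\to\+M_j$, recovered by reading off the forward commitments (a total computable operation on $\omega^\omega$); and it is ill founded precisely when $\+M_i\cong\+M_j$. If $F$ is a total $\*d$-computable function witnessing loquacious highness for paths and $h$ is computable with $T_{h(i,j)}=S_{i,j}$, set $G(i,j)$ to be the forward part of $F(h(i,j))$. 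Then $G(i,j)$ is total because $F(h(i,j))$ is, and when $\+M_i\cong\+M_j$ the value $F(h(i,j))$ is a genuine path through $S_{i,j}$, so $G(i,j)$ is an isomorphism; hence $G$ witnesses loquacious highness for isomorphism.

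\emph{From loquacious highness for isomorphism to loquacious highness for paths.} Here I would use the coding underlying the fact that the isomorphism problem is $\Sigma^1_1$-hard: a uniformly computable assignment $T_n\mapsto(\+A_n,\+B_n)$ of computable structures with $\+A_n\cong\+B_n$ iff $T_n$ is ill founded. The extra feature I need --- and which the usual tree-like codings supply --- is a \emph{total} computable operator $\Phi$ on $\omega^\omega$ such that $\Phi(\phi)$ is a path through $T_n$ whenever $\phi$ is an isomorphism $\+A_n\to\+B_n$; concretely, one arranges $\+A_n$ so that a branch of $T_n$ sits inside it as a definable sub-object read off by applying $\phi$ to a fixed computable list of points, which makes $\Phi$ meaningful on every $\phi\in\omega^\omega$. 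Given a total $\*d$-computable $G$ witnessing loquacious highness for isomorphism and a computable $k=(k_0,k_1)$ with $(\+M_{k_0(n)},\+M_{k_1(n)})=(\+A_n,\+B_n)$, set $F(n)=\Phi(G(k_0(n),k_1(n)))$. Totality of $G$ and $\Phi$ gives totality of $F$, and when $T_n$ is ill founded, $G(k_0(n),k_1(n))$ is an isomorphism, so $F(n)$ is a path; hence $F$ witnesses loquacious highness for paths.

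The main obstacle is this second direction, and specifically the demand that $\Phi$ be total on all of $\omega^\omega$ rather than only on genuine isomorphisms: when $T_n$ is well founded, $G(k_0(n),k_1(n))$ is guaranteed only to be total --- possibly neither a bijection nor atomic-formula--preserving --- and we still need $F(n)$ to come out total. This is why it matters to have a coding in which extracting the path from an isomorphism is literally the application of a total computable Baire-space operator, not a procedure that presupposes its input is an isomorphism. The remaining points --- uniformity of $h$ and $k$, the fact that projecting a path onto its forward commitments is total computable, and the back-and-forth bookkeeping --- are routine; note also that, in contrast with the observation above that every PA degree is loquaciously PA, no direct ``padding'' trick is available for $\Pi^0_1$ classes on Baire space, which is precisely what forces the detour through these codings.
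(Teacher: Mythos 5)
Your proposal is correct and follows essentially the same route as the paper: the forward direction uses the same back-and-forth tree of partial isomorphisms (with the same observation that reading a pair/forward component off an arbitrary total sequence is itself total), and the reverse direction is exactly the paper's coding via rank-saturated trees of infinite rank --- $\mathcal{A}_n = S$ carrying a fixed computable path and $\mathcal{B}_n = T_n \star S$ --- where the path is extracted by applying the purported isomorphism to the computable path of $S$ and projecting, a total operator, which is precisely the obstacle you correctly isolate. The only thing left implicit in your sketch is the appeal to the specific facts about rank-saturated trees (existence of a computable one of infinite rank with a computable path, uniqueness up to isomorphism, and preservation under $\star$ with an ill-founded tree), which the paper cites from \cite{FFHKMM}.
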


\begin{proof}
This proof follows that of Proposition 2.6 in \cite{CFT2023}. We include it here for completeness and make careful note of the aspects of the proof that allow us to maintain loquacity.

Suppose that $\* d$ is loquaciously high for paths.  Given two isomorphic computable structures $\+M_i$ and $\+M_j$,  we define a tree $I_{i,j}$ as follows.  A vertex of the tree consists of a pair $(\sigma,\tau)$ such that $\sigma, \tau$ are partial isomorphisms $\sigma:\+M_i \to \+M_j$ and $\tau:\+M_j \to \+M_i$ whose domains are finite initial segments of the universes of the respective structures (and of the same length), and $\sigma$ and $\tau$ are inverses where both are defined. Vertices are ordered by extension.  Note that a path through $I_{i,j}$ represents a pair consisting of an isomorphism and its inverse and that this tree is uniformly computable in the two structures.  Now $\* d$, via a total functional, computes a path through $I_{i,j}$, which gives an isomorphism and its inverse.

Let $\*d$ be loquaciously high for isomorphism, and let $\sT$ be a nonempty $\Pi^0_1$ class of functions generated by a computable tree $T$.

Our proof is based on rank-saturated trees of infinite rank \cite{CalvertKnightMillar2006,FFHKMM}; we will require the following facts:
\begin{enumerate}
    \item There is a computable rank-saturated tree $S$ of infinite rank that has a computable path.  (This follows from the proof of Lemma 1 in \cite{FFHKMM}, beginning with a Harrison order with a computable descending sequence.) 
    \item Any two computable rank-saturated trees of infinite rank are isomorphic.  (Proposition 2, \cite{FFHKMM})
    \item If $S$ is a computable rank-saturated tree of infinite rank and $T$ is any computable tree with a path, then $T\star S$ is a computable rank-saturated tree of infinite rank.  (Proposition 1, \cite{FFHKMM})
\end{enumerate}

Then $S$ and $T\star S$ are our two computable structures in some appropriate language.  Let $f$ be a computable path through $S$.  Since $\*d$ can, via a total functional, compute an isomorphism between $S$ and $T\star S$, we have that $\*d$ can carry $f$ over to a path in $T\star S$ and then project that down to a path in $T$ as required.  

Note that if $T$ is well founded, then $T\star S$ will be as well, so there will be no isomorphism between $S$ and $T\star S$.  In this case, our total $\*d$-computable function will give us a total function that is not an isomorphism, which we apply to the computable path through $S$, obtaining a function which is not a path of $T\star S$.
\end{proof}

\begin{cor}\label{cor:loquacious_inverses}  Suppose $\mathbf{d}$ is loquaciously high for isomorphism.
\begin{enumerate}
    \item There are $\*d$-computable functions $\Phi$ and $\Psi$ such that if $\sM_i \cong \sM_j$ are computable structures, then $\Phi(i,j): \sM_i \cong \sM_j$ and $\Psi(i,j) = \Phi(i, j)^{-1}$.
    \item There is a $\*d$-computable function $\Theta$ witnessing loquaciously high for isomorphism and such that for any computable $\sM_i \cong \sM_j$, this same $\Theta$ gives the inverse as $\Theta^{-1}(j,i)=\Theta(i,j)$.
\end{enumerate}
\end{cor}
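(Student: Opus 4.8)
The plan is to read both statements off the tree-of-partial-isomorphisms construction already appearing in the proof of Proposition~\ref{pathsvsisom}, keeping careful track of totality.

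For part~(1), recall from Proposition~\ref{pathsvsisom} that $\mathbf{d}$ is also loquaciously high for paths; fix a total $\mathbf{d}$-computable $G$ sending each index $e$ to an element $G(e)$ of Baire space that is a path through $T_e$ whenever $T_e$ is ill founded. For each $i,j$, form the tree $I_{i,j}$ of pairs $(\sigma,\tau)$ of mutually inverse finite partial isomorphisms $\sigma\colon\sM_i\to\sM_j$ and $\tau\colon\sM_j\to\sM_i$, laid out on $\omega^{<\omega}$ so that every point $h$ of Baire space decodes coordinatewise into a \emph{pair of total functions} $\Phi(i,j)\colon|\sM_i|\to|\sM_j|$ and $\Psi(i,j)\colon|\sM_j|\to|\sM_i|$, with genuine paths through $I_{i,j}$ decoding into an isomorphism together with its inverse. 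Since the $I_{i,j}$ are uniformly computable, taking $\Phi(i,j)$ and $\Psi(i,j)$ to be the two coordinates of the decoding of $G$ applied to an index for $I_{i,j}$ yields $\mathbf{d}$-computable functions that are total on every input. When $\sM_i\cong\sM_j$ the tree $I_{i,j}$ is ill founded, so $G$ returns a genuine path and $\Phi(i,j)\colon\sM_i\cong\sM_j$ with $\Psi(i,j)=\Phi(i,j)^{-1}$.

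For part~(2), fix the total $\mathbf{d}$-computable $\Phi,\Psi$ from part~(1) and define $\Theta$ by a case split on the order of the pair of indices: $\Theta(i,j)=\Phi(i,j)$ if $i<j$, $\Theta(i,j)=\Psi(j,i)$ if $i>j$, and $\Theta(i,i)=\mathrm{id}_{|\sM_i|}$. Then $\Theta$ is $\mathbf{d}$-computable and total on every input, and if $\sM_i\cong\sM_j$ then $\Theta(i,j)$ is an isomorphism $\sM_i\to\sM_j$ in each of the three cases --- using $\sM_j\cong\sM_i$ and part~(1) when $i>j$ --- so $\Theta$ witnesses loquacious highness for isomorphism. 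For the inverse clause, suppose $\sM_i\cong\sM_j$ with $i<j$ (the case $i>j$ is symmetric and $i=j$ is immediate): then $\Theta(j,i)=\Psi(i,j)=\Phi(i,j)^{-1}$, so $\Theta(j,i)^{-1}=\Phi(i,j)=\Theta(i,j)$, that is, $\Theta^{-1}(j,i)=\Theta(i,j)$.

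The only real obstacle is the totality demanded of $\Phi$ and $\Psi$ in part~(1). The naive approach --- take $\Phi=F$ for any loquacity witness $F$ and set $\Psi(i,j)(b)=\mu a\,[F(i,j)(a)=b]$ --- does give $\Phi^{-1}$ correctly on isomorphic pairs (each query converges because $F(i,j)$ is total, and the search succeeds because $F(i,j)$ is onto when $\sM_i\cong\sM_j$), but on non-isomorphic pairs the unbounded search for a preimage under a non-surjective total function may diverge. Routing through the paths witness and reading the two functions off coordinatewise is exactly what repairs this. The rest is bookkeeping; note in particular that in part~(2) we hard-code the identity on the diagonal rather than trust $\Phi(i,i)$ to be an involution.
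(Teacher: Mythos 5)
Your proof is correct and follows essentially the same route as the paper's: reduce to loquacious highness for paths, read both coordinates of a (total) point of Baire space off the tree $I_{i,j}$ of mutually inverse partial isomorphisms, and then define $\Theta$ by the same case split on $i<j$, $i>j$, $i=j$. Your writing $\Psi(j,i)$ in the case $i>j$ is in fact the correct orientation (the paper's $\Psi(i,j)$ there appears to be an index slip), and your closing remark about why the naive inverse-by-search fails matches the paper's own commentary preceding the proof.
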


We note that this statement is not as trivial as it might first appear.  As $\*d$ is loquaciously high for isomorphism, we have a total $\*d$-computable function $\Phi$ giving isomorphisms, and the naive approach is to define $\Psi$ by simply computing the inverse of the isomorphism given by $\Phi$.  The difficulty is that when $\sM_i \not \cong \sM_j$, $\Phi(i, j)$ is producing garbage which may in particular fail to be a bijection.  In that case, attempting to compute an inverse may result in something partial, which we cannot have.

\begin{proof} Suppose $\mathbf{d}$ is loquaciously high for isomorphism.  It follows that $\mathbf{d}$ is also loquaciously high for paths.  Let $\sM_i,\sM_j$ be computable structures.  We construct the tree $I_{i,j}$ as in the previous proof.

From $\mathbf{d}$, we can, uniformly in $i$ and $j$, compute a path in $T_{i,j}$, and the two components of this path give the necessary isomorphisms.  This establishes the first part.

    Toward the second part, we let $\Phi$ and $\Psi$ be as in Part 1.  Now we define $\Theta(i,j) = \Phi(i,j)$ if $i < j$, $\Theta(i,j) = \Psi(i,j)$ if $i > j$, and set $\Theta(i,j)$ to be the identity if $i = j$.
\end{proof}

Indeed, the degrees loquaciously high for paths are quite powerful.  We recall that there are degrees which are high for paths without further restriction which have $\sO$ for their third jump.  We have seen in Theorem \ref{thm:reticence_all_the_same} that the degrees reticently high for paths are exactly the ones which enumerate all $\Sigma^1_1$ sets.  The status in this regard of degrees uniformly high for paths is one of the principal open problems in this area.

\begin{prop}\label{prop:lhfp_enumerates_O} If $\mathbf{d}$ is loquaciously high for paths, then $\mathbf{d}$ enumerates all $\Pi^1_1$ sets uniformly.\end{prop}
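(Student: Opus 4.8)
The plan is to exploit the standard fact that $\Pi^1_1$ sets are exactly the sets of $n$ for which a computable tree $T_n$ on $\omega^{<\omega}$ is well founded, and to recognize well-foundedness by "catching" the loquacious functional producing something that fails to be a path. Concretely, fix an effective enumeration $(T_n)$ of computable trees such that $A = \{n : T_n \text{ is well founded}\} $ is an arbitrary $\Pi^1_1$ set, and let $F$ be a total $\*d$-computable function witnessing loquacious highness for paths applied to these trees (using, as in the discussion after the definition, the convention that $F(n)$ is meant to be a path through $T_n$ when one exists). When $T_n$ is ill founded, $F(n)$ is a genuine path, so $F(n)\upto s \in T_n$ for every $s$. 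When $T_n$ is well founded, $F(n)$ is still total but is \emph{not} a path, so there must be some stage $s$ with $F(n)\upto s \notin T_n$. Thus $\*d$ can enumerate $A$ by searching, for each $n$, for a stage $s$ with $F(n)\upto s \notin T_n$, and enumerating $n$ into $A$ once such an $s$ is found. This is a $\*d$-computable enumeration, and it is uniform in an index for the $\Pi^1_1$ set, since the passage from a $\Pi^1_1$ index to the sequence $(T_n)$ is uniformly computable and $F$ is fixed.

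The one subtlety is that the naive setup above guarantees only that $\{n : T_n \text{ well founded}\}$ is \emph{enumerated}, but we should check that we are enumerating the correct set and not a superset: if $T_n$ is ill founded but the loquacious functional happens to be "lucky" and outputs a non-path anyway, we would wrongly enumerate $n$. This cannot happen: loquacity for paths \emph{requires} that when $T_n$ is ill founded, $F(n)$ \emph{is} a path, so every initial segment $F(n)\upto s$ lies in $T_n$ and the search never succeeds for such $n$. Hence the enumerated set is exactly $A$. (If one is worried about the coding conventions — whether "$F(n)$ a path" means a path through $T_n$ specifically, or through some uniformly derived tree — one simply runs $F$ on the tree one actually cares about; the argument is insensitive to these choices, as the only property used is the dichotomy "path vs.\ not a path through the given tree.")

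The main obstacle, such as it is, is conceptual rather than technical: one must notice that loquacity is exactly the hypothesis needed to make the "failure to be a path" event a genuine, detectable event — a reticent functional would diverge on well-founded inputs and give us no witness, while a merely (nonuniformly, non-loquaciously) high-for-paths degree gives us no single functional to diagnose. Granting that observation, the proof is the short enumeration argument above. One should also remark, for the record, that this gives another separation: since every degree that enumerates all $\Pi^1_1$ sets computes a function dominating every $\Pi^1_1$-recursive function and in particular has high-ish complexity, loquacious highness for paths is strictly stronger than plain highness for paths (the latter being achievable with $\sO$ as third jump), which motivates the open problems mentioned in the section.
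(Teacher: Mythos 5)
Your proof is correct and follows essentially the same route as the paper's: run the total witness $F$ on each tree index, search for a length $\ell$ with $F(n)\upharpoonright_\ell \notin T_n$, and enumerate $n$ when such an $\ell$ is found, noting that totality guarantees a witness exactly when $T_n$ is well founded and that loquacity rules out false positives on ill-founded trees. The extra closing remark about domination is not needed for the statement, but the core argument matches the paper's.
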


\begin{proof}  Suppose that $\mathbf{d}$ is loquaciously high for paths.  It suffices that $\mathbf{d}$ can enumerate the indices of well-founded computable trees.  

Let $\Phi$ witness that $\mathbf{d}$ is loquaciously high for paths.  Now, when $i$ is the index of a computable tree $T$, we search for a length $\ell$ at which the path $\Phi(i)\!\!\upharpoonright_\ell\ \notin T$.  

If such an $\ell$ arises, then $T$ is well founded, since $\Phi(i)$ would have been a path of $T$ if $T$ were ill founded.  In that case, $T$ should be enumerated as a well-founded computable tree.\end{proof}

\begin{prop} There is a degree that enumerates $\sO$ but is not loquaciously high for paths.\end{prop}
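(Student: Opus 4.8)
The plan is to build the degree $\mathbf{d}$ by a forcing/exhaustion argument that diagonalizes against every candidate functional $\Phi_e$ purporting to witness loquacious highness for paths, while separately ensuring that $\mathbf{d}$ enumerates $\sO$. The two requirements pull in opposite directions only mildly: enumerating $\sO$ is a positive, $\Sigma^1_1$-flavored requirement that we can meet by coding, whereas defeating loquacity is a requirement that, for each $e$, asks us to find a computable tree $T$ (well-founded or not) on which $\{\Phi_e\}^{D}$ behaves badly — either it fails to be total, or it is total but on some ill-founded $T$ it eventually leaves $T$, or on some well-founded $T$ it produces an infinite path. Since we get to choose $T$ after seeing enough of $\Phi_e$'s behavior, and since "ill-founded" trees can be made to have their ill-foundedness depend on an oracle we control, this is the kind of requirement that should be satisfiable by a single restraint-and-coding strategy.

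Here is the structure I would use. Start from a degree $\mathbf{c}$ that enumerates $\sO$ but is otherwise as low as possible — for instance, take $\mathbf{c}$ to be (the degree of) a $\Sigma^1_1$-complete set, or more carefully a set whose $\sO$-enumeration power we need but whose computational strength we wish to limit. Actually the cleaner route: first observe, via Proposition~\ref{prop:lhfp_enumerates_O}, that loquacious highness for paths implies uniform enumeration of all $\Pi^1_1$ sets, hence of $\sO$; so the separating degree must fail loquacity for paths for a genuinely computational reason, not merely because it cannot enumerate $\sO$. The idea is then to take $\mathbf{d}$ to compute $\sO$ as a set (so certainly it enumerates $\sO$) but to be "hyperarithmetically weak" in a way incompatible with loquacity: specifically, I would aim for a $\mathbf{d}$ that computes $\sO$ but does not compute a path through some particular computable ill-founded tree. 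Wait — that is impossible if $\mathbf{d} \geq_T \sO$, since $\sO$ itself computes paths through every ill-founded computable tree. So the separating $\mathbf{d}$ must \emph{not} compute $\sO$ as a set; it only enumerates it.

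So the refined plan: take $\mathbf{d}$ to be the degree of a c.e.-in-$\sO$-ish set, or better, the degree of a set $D$ that is $\Sigma^1_1$ (so enumerates $\sO$ by completeness) but such that $D$ does not compute a path through the Harrison tree (equivalently, $D$ does not compute any infinite descending sequence through the Harrison linear order). Such a $D$ exists: the Harrison order has no hyperarithmetic descending sequence, and more to the point there are $\Sigma^1_1$ sets that are of hyperarithmetically low-ish degree — concretely, take a $\Sigma^1_1$ set $D$ whose Turing degree does not bound $\mathcal{O}$; many such exist since, e.g., one can take $D = \mathcal{O} \oplus A$... no. Let me instead invoke: there is a $\Sigma^1_1$ singleton, or a $\Sigma^1_1$ set, of degree $\mathbf{d}$ with $\mathbf{d}' \not\geq_T \mathcal{O}$ — take $D$ to be a path through a suitable $\Sigma^1_1$-bounded pointed tree selected to avoid computing $\sO$. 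Then if $\mathbf{d}$ were loquaciously high for paths, applying its witness functional to an index for the Harrison tree (which is ill-founded) would yield a $\mathbf{d}$-computable path through it, contradicting that $\mathbf{d}$ does not compute such a path. Meanwhile $\mathbf{d}$ enumerates $\sO$ because $D \in \Sigma^1_1$ and $\Sigma^1_1$ sets are exactly those enumeration-reducible to (enumerable from) $\mathcal{O}$... I need enumerating $\sO$ itself, so I should instead take $D$ to be $\Sigma^1_1$-complete-\emph{enough}: say $D$ enumerates $\sO$ (e.g. $D$ Turing-above a set that enumerates $\sO$) yet still does not compute a path through the Harrison tree.

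The main obstacle is exactly this tension: enumerating $\sO$ is a strong positive property, and I must verify that it can be had without the Turing-computational power to produce a path through a computable ill-founded linear order. I would resolve it by a direct construction: build $D$ by forcing with $\Sigma^1_1$-bounded conditions so that (i) $D$ enumerates $\sO$ — arrange a coding location for each element of $\sO$, enumerating it in as $\sO$ is enumerated — while (ii) meeting, for each $e$, the requirement that $\{\Phi_e\}^D$ is not a path through the fixed Harrison tree $T_H$: at stage $e$, either force a divergence, or force $\{\Phi_e\}^D$ to take a value outside $T_H$, using that $T_H$'s paths are not hyperarithmetic and the forcing conditions live at a bounded level. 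The delicate point is keeping the coding of $\sO$ compatible with the (possibly conflicting) restraints from the diagonalization; I expect this to work because the diagonalization requirements are, in the relevant forcing, "small" (each is met by a single finite extension or a single $\Sigma^1_1$-bounded commitment) and can be satisfied on a sparse set of coordinates disjoint from the $\sO$-coding coordinates. I will then conclude: $D$ enumerates $\sO$, but no functional with oracle $D$ witnesses loquacious highness for paths, since every such functional, fed an index for $T_H$, must produce a path through $T_H$, and we have defeated all of them. Hence $\deg(D)$ is the desired degree.

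\medskip

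\noindent\emph{Remark on what I expect the authors to do.} It is plausible the authors give a slicker argument: note every loquaciously-high-for-paths degree computes, for the Harrison tree, an actual path, hence computes an infinite descending sequence through a computable ill-founded linear order; so it suffices to exhibit a degree that enumerates $\sO$ but computes no path through \emph{some} fixed computable ill-founded tree. Then cite or construct such a degree (a $\Sigma^1_1$ set of appropriately bounded degree, or apply a basis-type theorem), and the result falls out in a few lines. Either way, the crux is the existence of a degree that is $\sO$-enumerating but hyperarithmetically too weak to traverse an ill-founded computable structure.
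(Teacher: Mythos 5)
There is a genuine gap, and it sits exactly where you flag the ``main obstacle.'' Your argument reduces the proposition to the claim that there is a degree which enumerates $\sO$ but computes no path through some fixed computable ill-founded tree (the Harrison tree) --- i.e., a degree enumerating $\sO$ that is not even \emph{high} for paths. That reduction is logically valid (loquacious implies uniform implies high), but it is strictly stronger than what is needed, and you never establish it. The candidate justifications you offer do not work: a set $D$ being $\Sigma^1_1$ does not imply that $D$ enumerates $\sO$ ($\sO$ is $\Pi^1_1$-complete; $\emptyset$ is $\Sigma^1_1$); ``$D$ Turing-above a set that enumerates $\sO$'' is circular; and the final forcing sketch asserts without proof that below every condition one can ``either force a divergence or force $\Phi_e^D$ to take a value outside $T_H$.'' That dichotomy is precisely the hard point: the tree of possible outputs of $\Phi_e$ over conditions is not hyperarithmetic (being a condition --- a correct partial enumeration of $\sO$ --- is $\Pi^1_1$), so there is no obvious reason the third alternative, namely that the generic really does trace a path through $T_H$, cannot occur. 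You would need to rule it out, and nothing in your write-up does.

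The paper's proof avoids this entirely by proving only the weaker failure. It forces with finite partial enumerations of $\sO$ (so the generic enumerates $\sO$ for free) and, for each functional $\Phi$, defeats \emph{loquacity} rather than highness: either some $\Phi(e;0)$ can never be made to converge (so $\Phi$ is partial, killing loquacity directly), or else the set $A_e$ of possible first values $\Phi^\tau(e;0)$ over conditions $\tau$ is a nonempty $\Pi^1_1$ set; by $\Pi^1_1$-selection one extracts a $\Pi^1_1$ singleton $\{m\}\subseteq A_e$, builds computable trees $S_{e,m'}$ well-founded iff $m'$ is that singleton, glues them under a root, and uses the Recursion Theorem to make this the tree $T_e$. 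The resulting $T_e$ is ill-founded, yet some condition forces $\Phi(e)$ to begin with $m$, sending it into the unique well-founded branch. This is the mechanism your sketch of ``choose $T$ after seeing enough of $\Phi_e$'s behavior'' is missing: the tree must be computable while encoding a $\Pi^1_1$ fact about the functional's possible behavior, and $\Pi^1_1$-selection plus the Recursion Theorem is what makes that legal. To salvage your approach you would either have to prove your stronger existence claim (a degree enumerating $\sO$ that is not high for paths), which is not obviously true and is certainly not a citation away, or switch to a per-functional diagonalization of the paper's kind.
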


\begin{proof}

We carry out a forcing in which a condition is a finite $\sigma: \omega \times \omega \to \{0,1\}$ such that if there is an $s$ such that $\sigma(n,s) = 1$, then $n \in \mathcal{O}$.  Note that being a condition for this forcing is $\Pi^1_1$.  A sufficiently generic filter $\+F$ will give us $X = \bigcup\limits_{\sigma \in \+F} \sigma$ such that $n \in \+O$ if and only if there is an $s$ such that $X(n, s) = 1$, so $X$ will enumerate $\+O$.

Suppose we have a condition $\sigma$ and a functional $\Phi$.  We wish to show that we can extend $\sigma$ to a condition that forces that $X$ is not loquaciously high for paths via $\Phi$.  If there is no extension $\tau \preceq \sigma$ with $\Phi^\tau(e; 0)\converge$, then $\sigma$ has already forced that $\Phi$ is partial.  Otherwise, we will build a tree $T_e$ by application of the Recursion Theorem.  Let $A_e = \{ m : (\exists \tau \le \sigma) [\Phi^\tau(e;0) = m]\}$.  The set $A_e$ is nonempty and $\Pi^1_1$.  By $\Pi^1_1$-selection, we can uniformly obtain a $\Pi^1_1$ singleton $B_e \subseteq A_e$.  For each $m$, let $S_{e,m}$ be a computable tree which is well founded if and only if $m \in B_e$.  Consider the tree made by putting a common root below all the $S_{e,m}$ trees so that $S_{e,m}$ is the tree below the $m$th child.  By the Recursion Theorem, there is some $e$ such that this tree is the tree with index $e$, which we denote by $T_e$.

Let $m$ be the unique member of $B_e$.  Then the $m$th child of the root is the only non-extendable child, but there is some extension $\tau$ with $\Phi^\tau(e)$ going through $m$.  So $\tau$ forces that $\Phi$ is not a witness for loquaciously high for paths.

Thus, if $X$ is generated from a sufficiently generic filter $\+F$ as described above, $X$ will not be loquaciously high for paths via any Turing functional $\Phi$, and thus will not be loquaciously high for paths.  As mentioned above, such an $X$ will enumerate $\+O$, so it gives us our desired degree.
\end{proof}

At this point, it may well be asked whether the degrees loquaciously high for paths (equivalently, isomorphism) are \emph{too} powerful to be interesting.  We might be inclined to suspect that they all compute $\sO$.  As we will see, this is not the case.

The following Lemma is closely related to Lemma 4.46 in \cite{CFT2023}, and we need only add that the degree constructed is \emph{loquaciously} high, rather than merely high.

\begin{lem}\label{FGFisLHDS}
    Let $(\mathcal{L}_n : n \in \omega)$ be an standard listing of the computable linear orders.  Let $f$ be a function such that if $\mathcal{L}_n$ is ill founded, then $f^{[n]}$ majorizes a descending sequence of $\+L_n$.  Then $f$ is loquaciously high for descending sequences.
\end{lem}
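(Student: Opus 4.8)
The plan is to produce a single total $f$-computable function $F\colon\omega\to\omega^\omega$ such that $F(n)$ is a descending sequence in $\mathcal{L}_n$ whenever $\mathcal{L}_n$ is ill founded, while $F(n)$ is total for \emph{every} $n$, whether or not $\mathcal{L}_n$ is ill founded. The construction of $F(n)$ out of the $n$th column $f^{[n]}$ is, up to cosmetic changes, the conversion of a majorant of a descending sequence into an actual descending sequence carried out in Lemma~4.46 of \cite{CFT2023}. So the bulk of the work is to recall that argument, observe that it goes through uniformly in $n$, and --- the genuinely new point here --- to arrange that it always halts and returns a full element of Baire space.

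Concretely, I would fix $n$, write $g = f^{[n]}$ and $\mathcal{L} = \mathcal{L}_n$, and run the Lemma~4.46 conversion with $g$ as the bound. At stage $k$ that conversion inspects only the finite, uniformly $g$-computable set of elements of $\mathcal{L}$ coded by numbers $\le g(k)$; since $g$ is total, every such search terminates. Whenever stage $k$ fails to locate an appropriate next element of the descending sequence under construction (equivalently, whenever the conversion as originally written would wait forever), I simply repeat the previously committed value of $F(n)$ --- or output $0$ if $k=0$ --- and move on to stage $k+1$. This makes $F(n)$ total outright. All of the data used (the presentation of $\mathcal{L}_n$, the column $f^{[n]}$, and the stagewise bounds) are uniformly recoverable from $f$, so $F$ is $f$-computable uniformly in $n$.

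It then remains to see that when $\mathcal{L}_n$ is ill founded this $F(n)$ really is a descending sequence. By hypothesis $f^{[n]}$ majorizes some descending sequence $d$ of $\mathcal{L}_n$, i.e.\ $f^{[n]}(k)$ bounds $d(k)$ for all $k$, and this is exactly the hypothesis of Lemma~4.46: one shows by induction on $k$ that the conversion never needs its default value --- the element it is searching for is always present below the relevant bound, with $d$ furnishing the witness and the inductive invariant ensuring the running chain stays below $d$ coordinatewise. Hence the output is a genuine descending sequence (and one can keep it tight as in \cite{CFT2023} if desired). Thus $F$ witnesses that $f$ is high for descending sequences, and by the previous paragraph $F(n)$ is total for every $n$; so $f$ is loquaciously high for descending sequences.

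I expect the only subtle point --- and the place to be careful --- to be totality in the well-founded case and its compatibility with the ill-founded case. When $\mathcal{L}_n$ is well founded there is no descending sequence to majorize, so nothing forces the conversion's searches to succeed; the content of the argument is that (i) every search performed in the conversion is genuinely bounded by a value of the total function $f^{[n]}$, so replacing a failed search by a default output is a harmless modification that never stalls, and (ii) in the ill-founded case those defaults are never triggered, by the Lemma~4.46 invariant. Verifying point (i) in detail --- that the conversion can indeed be presented so that no unbounded search ever occurs --- is the main thing that needs checking; loquacity then asks nothing more of $F(n)$ in the well-founded case than that it be total, which it is by fiat.
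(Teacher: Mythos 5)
Your proposal is correct and follows essentially the same route as the paper: at each stage you search only among the first $f^{[n]}(k)$ enumerated elements of $\mathcal{L}_n$ (a bounded, hence terminating, search), you detect failure explicitly and switch to a default output to secure totality, and in the ill-founded case you use the majorized descending sequence to show the default is never triggered. The only quibble is the direction of your inductive invariant: the constructed chain must stay at or to the \emph{right} of the majorized sequence in $\mathcal{L}_n$ (which is exactly why one takes the \emph{rightmost} admissible element at each stage), not below it.
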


\begin{proof}
    Given $n$, we use $f^{[n]}$ to attempt to construct a descending sequence.  Define the sequence $(a_i : i \in \omega)$ as follows:
    \begin{itemize}
        \item Consider the first $f^{[n]}(0)$ elements of $\+L_n$ to be enumerated, and let $a_0$ be the rightmost of these.
        \item Given $a_i$, consider the first $f^{[n]}(i+1)$ elements of $\+L_n$ to be enumerated, and let $a_{i+1}$ be the rightmost of these which is strictly to the left of $a_i$.
    \end{itemize}
    Observe that this is uniform.  During this construction, we may reach a point where we are unable to continue because there is no element left of $a_i$ within the first $f^{[n]}(i+1)$ elements.  Crucially, we can recognize when this happens.  In this case, we then know that $\+L_n$ is well founded, so we simply switch to outputting 0, and loquaciousness is maintained.

    Now suppose $\+L_n$ is ill founded and $(b_n : n \in \omega)$ is the descending sequence majorized by $f^{[n]}$.  Then, by induction, we have $b_i \le_{\+L_n} a_i$.  Thus, $b_{i+1}$ will always be an element among the first $f^{[n]}(i+1)$ elements of $\+L_n$ and strictly to the left of $a_i$, so the construction can continue.
\end{proof}

\begin{prop}
    Let $(U_n : n \in \omega)$ be an effective listing of $\Sigma^1_1$ subsets of $\omega$.  A degree $\*d$ is loquaciously high for descending sequences if and only if there is a total $\*d$-computable function $f$ such that if $U_n$ is nonempty, then $f(n) \ge \min U_n$.
\end{prop}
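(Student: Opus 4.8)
The plan is to prove the two implications separately; in both directions the engine is the $\Sigma^1_1$ normal form together with an appropriate construction (of a linear order, respectively of a tree), and only the backward direction carries any real difficulty.

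\emph{Forward direction.} Suppose $\mathbf{d}$ is loquaciously high for descending sequences, witnessed by a total $\mathbf{d}$-computable $F$ with each $F(e)$ total and $F(e)$ a descending sequence of $\sL_e$ whenever $\sL_e$ is ill founded. Given the listing $(U_n)$, I would, uniformly in $n$ and $m$, use the $\Sigma^1_1$ normal form to obtain computable trees $T_{n,m}$ on $\omega^{<\omega}$ with $T_{n,m}$ ill founded iff $m \in U_n$, replace each $T_{n,m}$ by its Kleene--Brouwer linear order $L_{n,m}$, form the finite sums $P_{n,m} := L_{n,0} + \cdots + L_{n,m}$, and finally build the computable linear order $\sM_n := P_{n,0} + P_{n,1} + P_{n,2} + \cdots$ with $P_{n,0}$ leftmost (all effectively, with universe $\omega$). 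The points are: $P_{n,m}$ is ill founded iff $\min U_n \le m$, so $\sM_n$ is ill founded iff $U_n \neq \emptyset$; and since the blocks occur in increasing order, any descending sequence of $\sM_n$ has nonincreasing block index and hence stabilizes within a single block $P_{n,m_\infty}$, which must therefore be ill founded, so $m_\infty \ge \min U_n$. Now pick a computable $c$ with $\sL_{c(n)} = \sM_n$ and define $f(n)$ to be the block index of $F(c(n))(0)$; this $f$ is total and $\mathbf{d}$-computable, and when $U_n \ne \emptyset$ the sequence $F(c(n))$ is genuinely descending, so $f(n) \ge m_\infty \ge \min U_n$.

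\emph{Backward direction.} Suppose the bounding function $f$ exists. By Lemma \ref{FGFisLHDS} it suffices to produce a total $\mathbf{d}$-computable $h$ such that $h^{[n]}$ majorizes a descending sequence of $\sL_n$ whenever $\sL_n$ is ill founded. Replace $\sL_n$ by its computable tree $T_n$ of finite descending sequences, so that paths through $T_n$ are exactly the infinite descending sequences of $\sL_n$ and $T_n$ is ill founded iff $\sL_n$ is. For $\tau \in T_n$, the set $V_{n,\tau} := \{\, j : \tau^{\frown}\langle j\rangle \text{ is extendable to an infinite path through } T_n \,\}$ is $\Sigma^1_1$ uniformly in $(n,\tau)$, hence equals $U_{r(n,\tau)}$ for some computable $r$. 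I would then define $h^{[n]}(i)$ by recursion on $i$ to be the maximum of $f(r(n,\tau))$ over the finitely many $\tau \in T_n$ with $|\tau| = i$ and $\tau(i') \le h^{[n]}(i')$ for all $i' < i$; this $h$ is total and $\mathbf{d}$-computable. Consider the \emph{finitely branching} subtree $T_n^{h} := \{\, \sigma \in T_n : \sigma(i) \le h^{[n]}(i) \text{ for all } i < |\sigma| \,\}$. An induction on $k$ shows that if $T_n$ is ill founded then $T_n^{h}$ contains an extendable node of each length $k$: the empty node suffices at $k = 0$, and if $\tau_k$ is such a node, then $V_{n,\tau_k} \ne \emptyset$, so $j_k := \min V_{n,\tau_k}$ satisfies $j_k \le f(r(n,\tau_k)) \le h^{[n]}(k)$ (as $\tau_k$ is among the nodes in the maximum defining $h^{[n]}(k)$), whence $\tau_{k+1} := \tau_k^{\frown}\langle j_k\rangle$ is an extendable node of length $k+1$ in $T_n^{h}$. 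Thus $T_n^{h}$ is infinite and finitely branching, so K\"onig's Lemma yields an infinite path $\sigma$; this $\sigma$ is a descending sequence of $\sL_n$ with $\sigma(i) \le h^{[n]}(i)$ for all $i$. Feeding $h$ (with the trivial off-by-one adjustment) into Lemma \ref{FGFisLHDS} shows $\mathbf{d}$ is loquaciously high for descending sequences.

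\emph{The main obstacle.} The only delicate point is in the backward direction: $f$ supplies merely a \emph{lower} bound on $\min U_n$, so one cannot simply follow a leftmost branch of $T_n$ --- a greedy descent into small-index children can run into a dead end, and extendability is not decidable, so there is no way to correct course effectively. The way around this is to use $f$ on the extendability sets $V_{n,\tau}$ only to pin down a \emph{finitely branching} subtree of $T_n$, and then to let K\"onig's Lemma produce the path (noneffectively) while Lemma \ref{FGFisLHDS} carries out the actual effective construction of a descending sequence from the majorant $h^{[n]}$. The inductive claim that $T_n^{h}$ retains extendable nodes of every length is precisely what guarantees that a suitable path exists.
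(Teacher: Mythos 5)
Your proof is correct and takes essentially the same route as the paper's, which simply defers to the proof of Lemma 4.14 of \cite{CFT2023} with the remark that loquacity makes the witnessing functions total: the forward direction builds linear orders from Kleene--Brouwer orders of the normal-form trees so that the block containing the first term of any descending sequence bounds $\min U_n$, and the backward direction uses $f$ to bound the extendability sets of the tree of finite descending sequences, producing a total majorant to feed into Lemma~\ref{FGFisLHDS}. The only difference is that you write out in full the argument the paper cites.
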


\begin{proof}
    We proceed as in the proof of Lemma 4.14 in \cite{CFT2023}, noting that loquaciousness makes the function total.
\end{proof}

A similar adaptation of the proof of Lemma 4.15 in \cite{CFT2023}, in combination with our observation before Lemma \ref{pathsvsisom} that every PA degree is loquaciously PA, gives us the following:

\begin{lem}\label{PAoverLHDS} Let $\mathbf{d}$ be PA over a degree that is loquaciously high for descending sequences.  Then $\mathbf{d}$ is loquaciously high for isomorphism.
\end{lem}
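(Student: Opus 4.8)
The plan is to run the proof of Lemma~4.15 in \cite{CFT2023}, which establishes the non-loquacious statement that being PA over a degree high for descending sequences implies being high for isomorphism, while checking that each ingredient can be made to produce a \emph{total} function. By Proposition~\ref{pathsvsisom} it suffices to show that $\mathbf{d}$ is loquaciously high for paths. Write $\mathbf{c}$ for the degree that $\mathbf{d}$ is PA over, so $\mathbf{c}$ is loquaciously high for descending sequences. The first ingredient is that, by Lemma~\ref{FGFisLHDS} together with the explicit greedy construction in its proof, $\mathbf{c}$ carries a \emph{total} computable function which, given an index for any computable linear order $\mathcal{L}$, either outputs a genuine descending sequence of $\mathcal{L}$ or at some finite stage recognizes that $\mathcal{L}$ is well founded and thereafter outputs $0$: every query to this ``descending-sequence oracle'' returns a total function, and it never hangs. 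The second ingredient is that every PA degree is loquaciously PA (the observation before Proposition~\ref{pathsvsisom}); this relativizes to $\mathbf{c}$ and, by the standard reduction of finitely branching $\Pi^0_1$ classes to Cantor space together with padding, extends to finitely branching computable trees, so $\mathbf{d}$ carries a total function which, given an index for a $\mathbf{c}$-computable finitely branching tree $S$, outputs a path through $S$ when $S$ is infinite and some total function otherwise.

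Given a computable tree $T \subseteq \omega^{<\omega}$, I would follow Lemma~4.15: for each node $\sigma \in T$, feed the Kleene--Brouwer order $\mathrm{KB}(T_\sigma)$ of the subtree below $\sigma$ to the descending-sequence oracle. Along any descending sequence in $\mathrm{KB}(T_\sigma)$ the coordinate immediately past $\sigma$ (the ``child index'') is nonincreasing from the second term on, so it stabilizes to a value $a^*$ with $\sigma^\frown\langle a^*\rangle$ again extendible; reading off the child index of the second term of the oracle's output therefore yields a \emph{total} $\mathbf{c}$-computable function $B\colon T\to\omega$ such that, whenever $\sigma$ is extendible, $T$ has an extendible child $\sigma^\frown\langle a\rangle$ with $a\le B(\sigma)$. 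Then $T' = \{\tau\in T : (\forall n<|\tau|)\ \tau(n)\le B(\tau\upto n)\}$ is a $\mathbf{c}$-computable finitely branching tree which is infinite exactly when $T$ is ill founded, and every path through $T'$ is a path through $T$. Applying the loquacious-PA ingredient to $T'$ produces a total $\mathbf{d}$-computable function which is a path through $T$ when $T$ is ill founded and is total otherwise; this witnesses loquacious highness for paths. Equivalently, applying this to the tree $I_{i,j}$ of matched finite partial isomorphisms as in Proposition~\ref{pathsvsisom} and Corollary~\ref{cor:loquacious_inverses}, and reading the resulting path off as a pair consisting of an isomorphism and its inverse, gives a witness for loquacious highness for isomorphism directly.

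The point requiring care --- and the only place the argument could fail --- is verifying that the composite functional performs \emph{no unbounded search}, so that it is total on every input rather than only on indices of isomorphic structures. Each ingredient is itself total: the descending-sequence oracle is always invoked on honest computable linear orders built from the (always well-defined) tree, and the construction only ever extracts a fixed finite initial segment of its output, so the fact that this output is ``garbage'' when $T$ is well founded does no harm; the tree $T'$ is always a legitimate $\mathbf{c}$-computable finitely branching tree, merely possibly finite; and the loquacious-PA step is total by design after padding. The remaining work is thus the bookkeeping showing that $B$, the formation of $T'$, the padding, and the final read-off are each honestly (total-)computable rather than merely computable-when-a-path-exists, after which totality of the witness --- that is, loquacious highness for isomorphism --- is immediate. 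The one content-bearing observation, that the child index is nonincreasing along Kleene--Brouwer descending sequences (which is precisely what trades the $\omega$-branching of $T$ for the finite branching of $T'$ that the PA ingredient can handle), is already the mechanism used in the proof of Lemma~4.15, so nothing genuinely new is needed there.
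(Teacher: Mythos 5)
Your proposal is correct and is essentially the paper's intended argument: the paper proves this lemma precisely by adapting Lemma~4.15 of \cite{CFT2023} (descending sequences in Kleene--Brouwer orders yielding a finitely branching pruning of the tree) together with the observation that every PA degree is loquaciously PA, and your write-up simply makes explicit the totality bookkeeping (reading only a fixed finite initial segment of the possibly-garbage descending sequence, padding the finitely branching tree) that the paper leaves implicit. The key verification you supply --- that the child index is nonincreasing along a KB-descending sequence, so the second term already bounds an extendible child --- is the correct mechanism and is sound.
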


We can now combine these results to get the following separation.

\begin{thm}\label{LHFInotO} There is a degree $\mathbf{d}$ that is loquaciously high for isomorphism such that $\mathbf{d}'\equiv_T \sO$.\end{thm}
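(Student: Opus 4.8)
The plan is to build $\mathbf{d}$ by a forcing/priority construction that makes $\mathbf{d}$ loquaciously high for descending sequences while keeping $\mathbf{d}' \le_T \sO$, and then invoke Lemma~\ref{PAoverLHDS}. By the Proposition following Lemma~\ref{FGFisLHDS}, being loquaciously high for descending sequences is equivalent to having a total $\mathbf{d}$-computable function $f$ with $f(n) \ge \min U_n$ whenever the $\Sigma^1_1$ set $U_n$ is nonempty; equivalently, by Lemma~\ref{FGFisLHDS}, a total $f$ such that $f^{[n]}$ majorizes some descending sequence of $\mathcal L_n$ whenever $\mathcal L_n$ is ill founded. So the real target is: construct a degree $\mathbf{e}$ with $\mathbf{e}' \le_T \sO$ that computes such a majorizing function $f$, and then take $\mathbf{d}$ to be PA over $\mathbf{e}$ with $\mathbf{d}'$ still $\le_T \sO$ (a PA degree over $\mathbf{e}$ can be found below $\mathbf{e}'$ by the low basis theorem relativized, so $\mathbf{d}' \le_T \mathbf{e}'' \le \ldots$ — one must be slightly careful here and instead build $\mathbf{d}$ directly so that $\mathbf{d}'\equiv_T\sO$). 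Since $\sO$ can decide, for each $n$, whether $\mathcal L_n$ is ill founded and, if so, produce (a code for) a descending sequence, the function $g(n) = \langle$ the sequence $\sO$ hands back, padded to all of $\omega$ if $\mathcal L_n$ is well founded $\rangle$ is $\sO$-computable and majorizes a descending sequence whenever one exists. So $\sO$ itself is loquaciously high for descending sequences, hence (by a PA-over argument, noting $\sO$ is PA over itself and more) $\sO$ is loquaciously high for isomorphism. The point of the theorem is to get this strictly below $\sO$ in the jump.

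The main construction is a forcing argument, in the spirit of the earlier proposition producing a degree that enumerates $\sO$ but is not loquaciously high for paths, and more relevantly in the spirit of the construction behind Lemma~4.46 of \cite{CFT2023} (which this excerpt explicitly says we are upgrading). I would use conditions that are finite approximations to the function $f$ together with a finite amount of information about which $\mathcal L_n$ are ill founded; the requirements are (a) for each $n$, ensure $f^{[n]}$ is defined everywhere and, if $\mathcal L_n$ is ill founded, eventually dominates some fixed descending sequence — this is $\Pi^1_1$ information, handled by asking $\sO$; and (b) totality, which is guaranteed by the loquacious escape clause (when we discover $\mathcal L_n$ is well founded we are free to set $f^{[n]}$ to be identically $0$ from that point on, exactly as in the proof of Lemma~\ref{FGFisLHDS}). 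The generic object $f$ is then loquaciously high for descending sequences. Finally I would PA-amalgamate: by Lemma~\ref{PAoverLHDS}, any $\mathbf{d}$ that is PA over $\deg(f)$ is loquaciously high for isomorphism, and since "every PA degree is loquaciously PA" (noted before Lemma~\ref{pathsvsisom}) there is no obstruction to loquacity there.

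The key step requiring care — and the main obstacle — is the jump calculation $\mathbf{d}' \equiv_T \sO$. The inequality $\sO \le_T \mathbf{d}'$ should be easy: arrange the forcing so that $f$ (together with the PA layer) enumerates or computes-from-its-jump enough of $\sO$, e.g., by a coding requirement that reads off membership in $\sO$ from the behavior of $f^{[n]}$ on the ill-founded linear orders $\mathcal L_n$ we build by the recursion theorem to code $\sO$ (this is where the earlier "enumerates $\sO$" proposition's technique gets reused). The genuinely delicate direction is $\mathbf{d}' \le_T \sO$: I must show the forcing is "$\sO$-effective" in the sense that the set of conditions, the extension relation, and the decisions made at each step of building the generic are all $\le_T \sO$ (equivalently, arithmetically-in-$\sO$ and then absorbed), so that $\sO$ can compute the generic $f$, and then compute the generic PA path over it by a low-basis-style argument that keeps everything below $\sO$'s jump — but since $\sO$ is already $\Pi^1_1$-complete and $\mathbf{d}'$ for a PA-over-$f$ degree naturally sits at the level of $(f \oplus \emptyset')'$-type objects, the tally has to come out to exactly $\sO$ and not higher. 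The way to control this is to make the construction lie at the $\Sigma^1_1 / \Pi^1_1$ level throughout (all the existential witnesses we ever need — descending sequences, extendable nodes, PA branches over finite trees — are at worst hyperarithmetic, and choosing them by $\Pi^1_1$-uniformization/selection, exactly as in the earlier proposition, keeps the bookkeeping $\le_T \sO$), so that a single oracle call to $\sO$ at each finite stage suffices and the final generic, together with its jump, is $\le_T \sO$.
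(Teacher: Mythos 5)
Your overall strategy is the paper's: reduce to building a fast-growing function via Lemma~\ref{FGFisLHDS}, pass to a PA degree over it via Lemma~\ref{PAoverLHDS} and the relativized Low Basis Theorem (which directly gives $\mathbf{d}' \le_T \mathbf{e}'$, so the worry about $\mathbf{e}''$ is misplaced), and get $\sO \le_T \mathbf{d}'$ for free. But there is a genuine gap at exactly the point you flag as delicate, and your proposed repair does not close it. To make the generic loquaciously high for descending sequences you must guarantee that it majorizes, everywhere and forever, a function $f$ witnessing descending sequences in all ill-founded $\+L_n$; a finite condition plus ``a finite amount of information about which $\+L_n$ are ill founded'' commits you to nothing about the infinitely many future values, so the natural fix is Hechler-style forcing with conditions $(\sigma,h)$, where $h$ majorizes $f$ and all extensions of $\sigma$ must majorize $h$. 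But then the jump-forcing question ``is there $\tau \supseteq \sigma$ majorizing $h$ with $\Phi_e^\tau(e)\converge$?'' is $\Sigma^0_1(h)$, and since any admissible witness $f$ (hence $h$) is necessarily non-hyperarithmetic and essentially at the level of $\sO$, this question sits at the level of $\sO'$, not $\sO$. Your construction as described would only yield $\mathbf{d}' \le_T \sO'$. Saying that ``all the existential witnesses are at worst hyperarithmetic'' and invoking $\Pi^1_1$-selection does not address this: the obstruction is a $\Sigma^0_1$ question relative to an oracle as complex as $\sO$ itself.

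The paper's resolution is a specific trick you would need to supply: to force divergence at $e$, ask instead ``does there exist \emph{some} function $\hat h$ extending $\sigma$ such that no $\tau \supseteq \sigma$ majorizing $\hat h$ has $\Phi_e^\tau(e)\converge$?'' This question makes no reference to the current side function $h$, is genuinely $\Sigma^1_1$ (an existential real quantifier over an arithmetic matrix), and hence is decidable by $\sO$. If the answer is yes, the witnesses form a nonempty $\Sigma^1_1$ class, $\sO$ computes a member $\hat h$, and the next condition is $(\sigma, \max(h,\hat h))$; if no, then in particular some convergent $\tau$ majorizing $h$ exists and $\sO$ finds it by unbounded search. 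Without this reformulation (or an equivalent device) the computation $\mathbf{d}' \le_T \sO$ does not go through. Two smaller points: ``eventually dominates'' should be ``majorizes'' to apply Lemma~\ref{FGFisLHDS} as stated, and the coding requirements you propose for $\sO \le_T \mathbf{d}'$ are unnecessary, since any loquaciously high degree already enumerates all $\Pi^1_1$ sets.
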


\begin{proof}
    By Lemmas \ref{FGFisLHDS} and \ref{PAoverLHDS} (and the Low Basis Theorem), it suffices to make a sufficiently fast-growing function which jumps to $\sO$.

    Let $f$ be such that if $\+L_n$ is ill founded, then $f^{[n]}$ majorizes a descending sequence.  Note that $\sO$ can compute such an $f$.  We build a $g$ majorizing $f$ using the partial order of Hechler forcing.  We start with the condition $(\seq{}, f)$.

    Given a condition $(\sigma, h)$ and an $e \in \omega$, we ask if there exists a function $\hat{h}$ extending $\sigma$ such that there is no $\tau$ extending $\sigma$ and majorizing $\hat{h}$ with $\Phi_e^\tau(e)\converge$ (crucially, this question makes no reference to $h$).  Note that this is $\Sigma^1_1$, and so $\sO$ can answer it.  If there is such an $\hat{h}$, then the collection of all such is a $\Sigma^1_1$ class, and so $\sO$ can compute an element of it.  We then make our next condition $(\sigma, \max(h, \hat{h}))$.

    If there is no such $\hat{h}$, then in particular there is a $\tau$ extending $\sigma$ and majorizing $h$ with $\Phi_e^\tau(e)\converge$.  In this case, we simply search for such a $\tau$ and define our next condition to be $(\tau, h)$.

    We perform the above for every $e$, interspersed with steps where we extend the length of $\sigma$.  This is all $\sO$-computable, so we obtain an $\sO$-computable sequence of conditions $(\sigma_0, h_0) \succeq (\sigma_1, h_1) \succeq \dots$.  Let $g = \bigcup_i \sigma_i$.  Then $g$ majorizes $f$, and we have been careful to force the jump, so $g' \le_T \sO$.
\end{proof}

\begin{prop} Let $\mathbf{d}$ be loquaciously high for descending sequences.  Then $\mathbf{d}$ enumerates all $\Pi^1_1$ sets.
\end{prop}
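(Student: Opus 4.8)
The plan is to mirror the proof of Proposition~\ref{prop:lhfp_enumerates_O}, replacing ill-founded trees by ill-founded linear orders. Recall that the set
\[
\mathrm{WF} = \{\, n : \+{L}_n \text{ is well founded} \,\},
\]
where $(\+{L}_n : n \in \omega)$ is a standard listing of the computable linear orders, is $\Pi^1_1$-complete. So it suffices to show that $\mathbf{d}$ enumerates $\mathrm{WF}$: given any $\Pi^1_1$ set $A$, fix a computable $g$ with $n \in A \iff g(n) \in \mathrm{WF}$, and enumerate $n$ into $A$ exactly when $g(n)$ is seen to enter $\mathrm{WF}$. (This also yields the enumeration uniformly, just as in Proposition~\ref{prop:lhfp_enumerates_O}.)

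To enumerate $\mathrm{WF}$, fix a total $\mathbf{d}$-computable $F$ witnessing that $\mathbf{d}$ is loquaciously high for descending sequences: $F(n)$ is total for every $n$, and if $\+{L}_n$ is ill founded then $F(n)$ is a descending sequence of $\+{L}_n$. On input $n$, I would use $\mathbf{d}$ to compute the values $F(n)(0), F(n)(1), \dots$ and search for an $i$ with $F(n)(i+1) \not<_{\+{L}_n} F(n)(i)$. Since $<_{\+{L}_n}$ is computable and $F(n)$ is total, this is a $\Sigma^0_1(\mathbf{d})$ search. When such an $i$ is found, enumerate $n$ into $\mathrm{WF}$.

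For correctness: if $\+{L}_n$ is ill founded, then $F(n)$ is a genuine strictly descending sequence, so the search never succeeds and $n$ is never (wrongly) enumerated; if $\+{L}_n$ is well founded, then it has no infinite descending sequence, so the total function $F(n)$ cannot be strictly decreasing at every step, hence some failure $i$ exists and $n$ is eventually enumerated. Thus the procedure enumerates exactly $\mathrm{WF}$.

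The adaptation is routine; there is no real obstacle. The one point worth emphasizing is that loquacity — the totality of $F(n)$ whether or not $\+{L}_n$ is ill founded — is precisely what makes the search reliable, since it guarantees that a well-founded $\+{L}_n$ is witnessed by an actual non-descending step rather than merely by divergence. This is exactly dual to the reticent case: there $F(n)$ converges iff $\+{L}_n$ is ill founded, so one enumerates the ill-founded indices and gets the $\Sigma^1_1$ sets (Theorem~\ref{thm:reticence_all_the_same}), whereas here one detects the well-founded indices and gets the $\Pi^1_1$ sets.
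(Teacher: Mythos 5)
Your proof is correct and follows essentially the same route as the paper's: reduce to enumerating the indices of well-founded computable linear orders, run the total witness $F(n)$, and enumerate $n$ as soon as the output visibly fails to be a descending sequence, which must happen when $\+L_n$ is well founded and never happens when it is ill founded. Your write-up just spells out the completeness reduction and the correctness check that the paper leaves implicit.
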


\begin{proof} It is sufficient to enumerate all well-founded linear orderings.  We give $\mathbf{d}$ the index for a linear ordering and examine the output.  If the sequence it produces ever fails to be decreasing, then we enumerate that index as the index of a well-founded linear ordering.
\end{proof}

The following result was implicit in \cite{CFT2023}, although we did not have the terminology at hand at the time.

\begin{prop} There is a degree that is loquaciously high for descending sequences but not high for paths.\end{prop}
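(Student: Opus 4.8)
The plan is to build the desired degree $\mathbf{d}$ by a forcing construction, exploiting the characterization from the earlier proposition that loquacious highness for descending sequences amounts to having a total $\mathbf{d}$-computable function $f$ which, for every $n$ with $U_n$ nonempty, satisfies $f(n) \ge \min U_n$, where $(U_n)$ is an effective listing of the $\Sigma^1_1$ subsets of $\omega$. So I want to construct such an $f$ (equivalently, via Lemma \ref{FGFisLHDS}, a function majorizing descending sequences in all ill-founded computable linear orders) while simultaneously diagonalizing against every Turing functional $\Phi_e$ to ensure that $\Phi_e^f$ is not a path through some particular nonempty $\Pi^0_1$ class on Baire space. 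The combination gives a degree that is loquaciously high for descending sequences but not high for paths, hence not high for isomorphism in the ordinary sense either.

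First I would set up the forcing: conditions are pairs $(\sigma, h)$ as in Hechler forcing (finite initial segment $\sigma$ together with a total ``promise'' function $h$ dominating the eventual generic on the tail), but with the side condition that $\sigma$ always majorizes the canonical fast-growing function $f_0$ that dominates descending sequences in ill-founded computable linear orders on its domain. A sufficiently generic $g = \bigcup_i \sigma_i$ then majorizes $f_0$ and so is loquaciously high for descending sequences by Lemma \ref{FGFisLHDS}. The genericity requirements for domination are exactly as in the proof of Theorem \ref{LHFInotO}; the new ingredient is a family of requirements forcing non-highness for paths.

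For the diagonalization requirement $R_e$: I would use the Recursion Theorem to define, uniformly from $e$, a nonempty $\Pi^0_1$ class $\mathcal{P}_e$ whose behavior depends on what $\Phi_e^\tau$ does along extensions $\tau$ of the current condition. Concretely, given a condition $(\sigma, h)$, ask whether there is an extension $\tau \succeq \sigma$ majorizing $h$ with $\Phi_e^\tau$ outputting a long enough initial segment of some potential path; if not, $(\sigma,h)$ already forces $\Phi_e^g$ to be too partial or too short to be a path through a cleverly chosen $\mathcal{P}_e$, so we are done for $R_e$. If yes, search for such a $\tau$ and the initial segment $\Phi_e^\tau \upharpoonright \ell$ it commits to; then build $\mathcal{P}_e$ (via the Recursion Theorem, as a computable tree with a designated nonempty subtree avoiding that segment) so that $\Phi_e^\tau \upharpoonright \ell$ is incompatible with every element of $\mathcal{P}_e$ while $\mathcal{P}_e$ is still nonempty. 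Passing to the condition $(\tau, h)$ then forces $R_e$. Crucially, I must check that meeting all the $R_e$ does not require making $g$ grow \emph{too slowly} anywhere — but since each $R_e$ is met by extending $\sigma$ (growth only helps) rather than by capping it, and the domination side condition only forces $\sigma$ to be large, the two families of requirements are compatible.

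The main obstacle I anticipate is the bookkeeping in the Recursion Theorem step for $R_e$: I need a single index $e$ such that the $\Pi^0_1$ class whose definition refers to the behavior of $\Phi_e$ along the forcing is itself the $e$-th class in some effective listing — and I need the ``avoid the committed segment but stay nonempty'' construction to go through uniformly, which requires knowing in advance that some extension is committed to a \emph{specific} finite string (not a moving target). This is handled exactly as in the final proposition before this one and in Proposition \ref{prop:lhfp_enumerates_O}, but care is needed because $\Phi_e^\tau$ for different $\tau$ may commit to different strings; restricting attention to the \emph{first} such $\tau$ found, and building $\mathcal{P}_e$ only after that string is fixed, resolves this. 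Everything else — verifying genericity suffices, verifying $g$ majorizes $f_0$, and citing Lemma \ref{FGFisLHDS} — is routine.
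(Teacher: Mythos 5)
Your proposal sets out to rebuild the whole separation by forcing, whereas the paper's proof is essentially a citation: Corollary 4.47 of \cite{CFT2023} already produced a total function majorizing a descending sequence in every ill-founded computable linear order whose degree is not high for paths, and Lemma \ref{FGFisLHDS} says that any such function is automatically \emph{loquaciously} high for descending sequences. Your reconstruction of the hard half (non-highness for paths) has a genuine gap at the diagonalization step. The class $\mathcal{P}_e$ you defeat must be a computable, lightface $\Pi^0_1$ class on Baire space, but your definition of it refers to ``the first $\tau \succeq \sigma$ majorizing $h$ with $\Phi_e^\tau$ committed to a specific string.'' Every admissible $h$ in your forcing majorizes $f_0$, and any function majorizing $f_0$ computes (by Lemma \ref{FGFisLHDS}) a descending sequence through the Harrison order, so $h$ is not hyperarithmetic; hence the predicate ``$\tau$ majorizes $h$'' and the committed string $\Phi_e^\tau\upharpoonright\ell$ are not computable --- indeed not even $\Pi^1_1$. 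The Recursion Theorem cannot launder this: it lets a \emph{computable} construction refer to its own index, but the tree you describe is only $h$-computable, so the class it defines is not one of the $\mathcal{P}_i$ against which highness for paths is measured. The superficially similar move in the paper (in the proposition separating enumerating $\mathcal{O}$ from loquacious highness for paths) works only because there the set $A_e$ is $\Pi^1_1$ (the conditions are finite strings with a $\Pi^1_1$ side condition and no domination constraint), so $\Pi^1_1$-selection plus the encoding of $\Pi^1_1$ singletons into well-foundedness of computable trees applies; and even then the argument defeats \emph{loquacious} highness, where the functional is obliged to produce a path through the ill-founded tree. Neither ingredient is available to you here.

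A secondary issue: even granting uniformly computable nonempty classes $\mathcal{P}_e$ with $\Phi_e^g \notin \mathcal{P}_e$, this does not show $g$ is not high for paths. Non-highness requires a \emph{single} nonempty $\Pi^0_1$ class no element of which is computed by \emph{any} functional with oracle $g$. The standard repair is a product class $\mathcal{P} = \{x : \forall e\,(x^{[e]} \in \mathcal{Q}_e)\}$ with the $e$-th requirement defeating the $e$-th column of $\Phi_e^g$, but this only makes the first problem more acute, since all the $\mathcal{Q}_e$ must then be uniformly computable. If you want a self-contained argument, you should either reproduce the actual mechanism of Corollary 4.47 of \cite{CFT2023} or find a fixed, forcing-independent computable class (defined in advance, not in response to the generic) and prove directly that no sufficiently generic dominating $g$ computes a member of it. As written, the proof does not go through; the intended solution is simply to quote the earlier construction and observe that Lemma \ref{FGFisLHDS} upgrades it to loquacity.
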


\begin{proof} In the proof of Corollary 4.47 in \cite{CFT2023}, we constructed a degree that is uniformly high for descending sequences but not high for paths.  The construction worked by producing a total function that majorized a descending sequence in every ill-founded linear order.  By Lemma~\ref{FGFisLHDS}, such a function is loquaciously high for descending sequences.\end{proof}

\begin{dfn}
    Let $K$ be a class of structures.  We define \emph{loquaciously high for isomorphism for $K$} in the obvious way, except that in light of Part 2 of Corollary~\ref{cor:loquacious_inverses}, we require the witnessing $\Theta$ to have the property that for $\sM_i \cong \sM_j \in K$, then we have $\Theta(i,j) = \Theta^{-1}(j,i)$.
\end{dfn}

\begin{lem}
    Let $K$ be a class of structures.  If a degree $\mathbf{d}$ is uniformly high for isomorphism for $K$ and enumerates $\sO$, then $\mathbf{d}$ is loquaciously high for isomorphism for $K$.
\end{lem}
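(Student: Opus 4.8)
The plan is to start from the (a priori partial) $\mathbf{d}$-computable function witnessing uniform highness for isomorphism for $K$ and to turn it into a total function by racing it against an enumeration of the non-isomorphic pairs, which $\mathbf{d}$ is able to produce precisely because it enumerates $\sO$. Concretely, first fix $D \in \mathbf{d}$ and a total computable $f$ such that $\{f(i,j)\}^D$ is an isomorphism $\sM_i \to \sM_j$ whenever $\sM_i \cong \sM_j$ in $K$; note that in that case $\{f(i,j)\}^D$ is in particular \emph{total}. Since isomorphism of computable structures is a $\Sigma^1_1$ relation (uniformly in the indices of the listed members of $K$), the set $N = \{(i,j) : \sM_i \not\cong \sM_j\}$ is $\Pi^1_1$, and as $\sO$ is $\Pi^1_1$-complete, from an enumeration of $\sO$ the degree $\mathbf{d}$ computes an enumeration of $N$, uniformly.

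Next I would define the witnessing function $\Theta$ on pairs of indices. Put $\Theta(i,i)$ equal to the identity. For $i < j$, compute $\Theta(i,j)(n)$ by running $\{f(i,j)\}^D(n)$ and the enumeration of $N$ in parallel: if the computation halts first, output its value; if $(i,j)$ is enumerated into $N$ first, output $n$ (and from then on use the identity on any remaining input whose $\{f(i,j)\}^D$-value has not yet appeared). For $i > j$, compute $\Theta(i,j)(y)$ by searching for an $x$ with $\Theta(j,i)(x) = y$ while enumerating $N$ in parallel, outputting such an $x$ if one is found first and outputting $y$ if $(i,j)$ enters $N$ first. There is no circularity here, since for $i>j$ the computation only calls $\Theta(j,i)$ with $j<i$, which falls under the first clause; and each clause is plainly $\mathbf{d}$-computable, uniformly in $i,j$.

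Finally I would verify the three requirements from the definition of loquaciously high for isomorphism for $K$. Totality: on an isomorphic pair, $\{f(i,j)\}^D$ is a total bijection, so both its values and (by unbounded search) the values of its inverse are eventually found; on a non-isomorphic pair, $(i,j)$ eventually appears in $N$; so every clause always returns. Correctness: $N$ contains no isomorphic pair, so if $\sM_i \cong \sM_j$ with $i<j$ then $\Theta(i,j) = \{f(i,j)\}^D$ is a genuine isomorphism $\sM_i \to \sM_j$ and $\Theta(j,i)$ is exactly its inverse; hence $\Theta(i,j)$ is an isomorphism $\sM_i \to \sM_j$ for every pair of isomorphic structures in $K$ (the cases $i=j$ and $i>j$ being immediate), and $\Theta(j,i) = \Theta(i,j)^{-1}$, i.e.\ $\Theta(i,j) = \Theta^{-1}(j,i)$, which is the extra symmetry clause. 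So $\Theta$ witnesses that $\mathbf{d}$ is loquaciously high for isomorphism for $K$.

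The only point really worth emphasizing — and the conceptual heart of the argument — is the first observation: uniform highness already hands us a function that is total on the isomorphic pairs, so the sole source of partiality is divergence on non-isomorphic pairs, and deciding non-isomorphism is exactly a $\Pi^1_1$ question, which is precisely what enumerating $\sO$ lets us dispatch. Once this is in place, the inverse-symmetry requirement in the definition of the ``for $K$'' notion (as in Corollary~\ref{cor:loquacious_inverses}) comes essentially for free: one only ever carries out the isomorphism computation for a pair in increasing index order and reads the other direction off as its inverse. I do not anticipate any genuinely hard step; the bookkeeping in making the race produce a single total function (rather than something partial or inconsistent) is routine.
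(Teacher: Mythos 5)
Your proposal is correct and follows essentially the same route as the paper's proof: use the enumeration of $\sO$ (via $\Pi^1_1$-completeness) to enumerate the non-isomorphic pairs, race that enumeration against the uniform procedure for $i<j$ (switching to garbage output if non-isomorphism is witnessed), obtain the $i>j$ case by unbounded search for the inverse, and set $\Theta(i,i)$ to the identity. Your write-up merely spells out the totality and symmetry checks in more detail than the paper does.
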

\begin{proof}
    By enumerating $\sO$, we can enumerate those pairs of non-isomorphic structures.  Thus, given two structures $\sM_i, \sM_j \in K$, we can attempt to compute an isomorphism via the uniform process, and if we eventually see that the structures are non-isomorphic, we can switch to outputting garbage.  In this way we define $\Theta(i,j)$ for $i < j$.  We define $\Theta(i,j)$ for $j > i$ by using unbounded search to find the inverse of $\Theta(j,i)$, switching to outputting garbage when we see $\sM_i \not \cong \sM_j$.  Of course, we define $\Theta(i,j)$ to be the identity when $i = j$.
\end{proof}

Of course, being loquaciously high for isomorphism for $K$ implies (by definition) being uniformly high for isomorphism for $K$, but in cases where the isomorphism problem for $K$ is not too difficult, such a degree may not enumerate $\sO$.  However, the converse holds with an additional hypothesis.

\begin{lem}
    Let $K$ be a class of structures for which the isomorphism problem is $\Sigma^1_1$-complete. In that case, if a degree is loquaciously high for isomorphism for $K$, then it is uniformly high for isomorphism for $K$ and enumerates $\sO$.
\end{lem}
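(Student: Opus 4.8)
The plan is as follows. That $\mathbf{d}$ is uniformly high for isomorphism for $K$ is immediate, since being loquaciously high for isomorphism for $K$ is by definition a strengthening of uniform highness for isomorphism for $K$. So the content is to show that $\mathbf{d}$ enumerates $\sO$, and in fact I would show it enumerates every $\Pi^1_1$ set.

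Fix an arbitrary $\Pi^1_1$ set $X$; the goal is to enumerate it relative to $\mathbf{d}$. Then $\omega\setminus X$ is $\Sigma^1_1$, so by the $\Sigma^1_1$-completeness of the isomorphism problem for $K$ there are computable functions $p$ and $q$ such that, writing $(\sM_n : n\in\omega)$ for the standard listing of the structures in $K$, we have $\sM_{p(i)}\cong\sM_{q(i)}$ if and only if $i\notin X$. Let $\Theta$ be a $\mathbf{d}$-computable function witnessing that $\mathbf{d}$ is loquaciously high for isomorphism for $K$: thus $\Theta(m,n)$ is total for all $m,n$, and whenever $\sM_m\cong\sM_n$ the map $\Theta(m,n)$ is an isomorphism $\sM_m\to\sM_n$ with $\Theta(n,m)=\Theta(m,n)^{-1}$.

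The key step is to pin down $K$-isomorphism by a $\Pi^0_1(\mathbf{d})$ condition. Call a pair $(m,n)$ \emph{good} if $\Theta(m,n)$ sends the universe of $\sM_m$ into the universe of $\sM_n$, respects every relation, function, and constant symbol of the language, and has $\Theta(n,m)$ as a two-sided inverse. Each of these requirements is a $\Pi^0_1(\mathbf{d})$ statement, and being good is their uniform conjunction, so ``$(m,n)$ is good'' is $\Pi^0_1(\mathbf{d})$. If $(m,n)$ is good then $\Theta(m,n)$, having a two-sided inverse, is a bijection, and since it preserves the atomic diagram it is an isomorphism, so $\sM_m\cong\sM_n$. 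Conversely, if $\sM_m\cong\sM_n$, then loquacity says $\Theta(m,n)$ is already an isomorphism with inverse $\Theta(n,m)$, so $(m,n)$ is good. Hence $\sM_m\cong\sM_n$ if and only if $(m,n)$ is good. Consequently $i\in X$ if and only if $(p(i),q(i))$ is \emph{not} good, which is a $\Sigma^0_1(\mathbf{d})$ condition uniformly in $i$; so $\mathbf{d}$ enumerates $X$. Since $X$ was arbitrary, $\mathbf{d}$ enumerates all $\Pi^1_1$ sets, in particular $\sO$.

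The one point needing care --- and the reason to build the two-sided-inverse clause into ``good'' rather than just asking $\Theta(m,n)$ to be a bijection --- is that surjectivity of $\Theta(m,n)$ is on its face only $\Pi^0_2$, so its failure is not directly enumerable. Using the inverse supplied by the definition of loquaciously high for isomorphism for $K$ (equivalently, by Part 2 of Corollary~\ref{cor:loquacious_inverses}) replaces this with the $\Pi^0_1$ statement that $\Theta(n,m)\circ\Theta(m,n)$ and $\Theta(m,n)\circ\Theta(n,m)$ are both the identity, which is exactly what keeps the whole characterization $\Pi^0_1(\mathbf{d})$ and hence makes the enumeration go through. Everything else is routine bookkeeping about the arithmetical complexity of the atomic conditions.
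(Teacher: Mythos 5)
Your proposal is correct and follows essentially the same route as the paper: reduce a $\Pi^1_1$ set (the paper uses $\sO$ itself) to isomorphism questions via $\Sigma^1_1$-completeness, and use the totality of the loquacious witness together with the built-in two-sided inverse to make ``not an isomorphism'' a $\Sigma^0_1(\mathbf{d})$ event. Your explicit remark about why the inverse clause is needed (surjectivity alone being $\Pi^0_2$) is exactly the point the paper compresses into ``checking if two functions are inverses and structure preserving is $\Pi^0_1$.''
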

\begin{proof}
    That the degree is uniformly high for isomorphism for $K$ is by definition.  To enumerate $\sO$, we use the completeness of the isomorphism problem.  Given $x \in \omega$, we uniformly construct two computable structures $\sA_x^0, \sA_x^1 \in K$ such that $\sA_x^0 \cong \sA_x^1$ if and only if $x \not \in \sO$.  Now, from our degree, we compute total functions $f: \sA_x^0 \to \sA_x^1$ and $g: \sA_x^1 \to \sA_x^0$ which are inverse isomorphisms if and only if $x \not \in \sO$.  Checking if two functions are inverses and structure preserving is $\Pi^0_1$.
\end{proof}

Of course, if we let $K$ be the class of all computable structures, the same result holds for degrees which are, without restriction to a class, loquaciously high for isomorphism via the same proof.  The proof only requires minor modification to work for loquaciously high for paths.

We now mention two related open problems in this area.

\begin{conj}
    If every degree loquaciously high for isomorphism for $K$ is uniformly high for isomorphism, then the isomorphism problem for $K$ is $\Sigma^1_1$-complete.
\end{conj}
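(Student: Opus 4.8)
The plan is to prove the contrapositive: assuming the isomorphism problem $I_K=\{(i,j):\sM_i\cong\sM_j\}$ of $K$ is \emph{not} $\Sigma^1_1$-complete (in the usual many-one sense), we produce a degree $\mathbf d$ that is loquaciously high for isomorphism for $K$ but not uniformly high for isomorphism. The first step is a common sharpening of the two lemmas immediately preceding the conjecture: a degree is loquaciously high for isomorphism for $K$ if and only if it is uniformly high for isomorphism for $K$ \emph{and} co-enumerates $I_K$. One direction is exactly the argument behind the second of those lemmas, reading off non-isomorphism from a finite failure of the $\Pi^0_1$ ``mutually inverse, structure-preserving'' test applied to $\Theta(i,j)$ and $\Theta(j,i)$; the other is the first of those lemmas with ``enumerates $\sO$'' replaced by the weaker ``co-enumerates $I_K$'', which is all that proof actually uses, and it specializes to the stated lemmas because co-enumerating a $\Sigma^1_1$-complete set is the same as enumerating $\sO$. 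Given this, it suffices to build one $\mathbf d$ that (i) co-enumerates $I_K$, (ii) is uniformly high for isomorphism for $K$, and (iii) is not uniformly high for isomorphism --- and for (iii) it is enough to defeat uniform highness for paths, since that is implied by uniform highness for isomorphism via the rank-saturated-tree reduction in the proof of Proposition~\ref{pathsvsisom}.

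The vehicle would be a $\Pi^1_1$ forcing in the style of the constructions earlier in this section, whose conditions simultaneously commit the generic to enumerating $I_K^c$ and to carrying a single functional that on input $(i,j)$ produces a path through the uniformly computable tree $T_{i,j}$ of matched partial isomorphisms $\sM_i\rightleftarrows\sM_j$ whenever $(i,j)\in I_K$ (for definiteness, forcing in the leftmost such path, which is definable $\Sigma^1_1$-ly and so can be decided along the construction). Interleaved are the diagonalization requirements that yield (iii): for each index $e$ of a candidate procedure purporting to witness uniform highness for paths, we fix a $\Sigma^1_1$ set $Y_e$ that does not many-one reduce to $I_K$ --- available exactly because $I_K$ is not $\Sigma^1_1$-complete --- realize $Y_e$ as the set of $n$ for which the $n$-th tree in a uniformly computable sequence is ill founded, and extend the current condition so as to force that candidate to fail on that sequence (possibly with an application of the Recursion Theorem as in the ``enumerates $\sO$ but not loquaciously high for paths'' construction).

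The crux, and the reason the statement is posed as a conjecture, is the compatibility of (ii) with (iii): one must show that committing the generic to being uniformly high for isomorphism for $K$ --- which pours in a great deal of path information about the whole family $(T_{i,j})$ --- does not inadvertently make it uniformly high for isomorphism outright. Morally this should come from a conservation property: the $K$-highness commitments ought to be harmless over the combined information ``$I_K^c$ enumerated together with paths through $(T_{i,j})$'', so that non-reducibility of $Y_e$ to $I_K$ then bars any reduction reading the generic from uniformly recovering paths through the $Y_e$-trees. But turning this into a genuine preservation argument requires understanding ``uniformly high for isomorphism'' well enough to say precisely which sequences of trees it is obliged to handle, and whether that notion coincides with enumerating all $\Sigma^1_1$ sets is itself flagged earlier as a principal open problem; so even setting up the diagonalization cleanly is delicate, and this preservation step is where we expect the genuine difficulty --- and the real risk --- to lie. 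A negative resolution, by contrast, would have to exhibit a class $K$ whose isomorphism problem is not $\Sigma^1_1$-complete yet for which ``uniformly high for isomorphism for $K$'' already forces the global notion, and the present methods neither obviously provide nor obviously preclude such a $K$.
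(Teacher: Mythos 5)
This statement is posed in the paper as a \emph{conjecture}: there is no proof of it there, so the only question is whether your argument settles it. It does not, and to your credit you say so yourself. What is correct and genuinely useful is the reduction: the contrapositive, together with your sharpening of the two lemmas preceding the conjecture to ``$\mathbf d$ is loquaciously high for isomorphism for $K$ iff $\mathbf d$ is uniformly high for isomorphism for $K$ and enumerates the complement of $I_K$'' (the paper's proofs indeed use ``enumerates $\sO$'' only as a proxy for ``enumerates the non-isomorphic pairs,'' and the $\Pi^0_1$ inverse-isomorphism test gives the converse). So the conjecture is equivalent to: if $I_K$ is not $\Sigma^1_1$-complete, there is a degree co-enumerating $I_K$ that is uniformly high for isomorphism for $K$ but not uniformly high for paths.

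The gap is the step you yourself flag as the crux, and it is not a technicality but the entire content of the conjecture. To keep the generic uniformly high for isomorphism for $K$, the conditions must commit it to paths through the ill-founded trees $T_{i,j}$ for \emph{every} isomorphic pair in $K$; to defeat a candidate witness $\Phi_e$ of uniform highness for paths, you must plant a tree whose ill-foundedness the generic cannot recover from those commitments. Non-completeness of $I_K$ only excludes a many-one reduction of an arbitrary $\Sigma^1_1$ set to the set of isomorphism \emph{facts}; it does not exclude the generic extracting the needed information from the committed \emph{paths} themselves (your choice of leftmost paths makes this worse, since leftmost paths of ill-founded computable trees on $\omega^{<\omega}$ need not be hyperarithmetic, so the commitment may inject strictly more than the isomorphism relation). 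Making this a genuine preservation argument would require knowing exactly which $\Sigma^1_1$ sets a degree uniformly high for isomorphism for $K$ is obliged to enumerate, which is adjacent to the open problem about uniformly high for paths that the paper highlights; the forcing arguments in Sections \ref{SecLoquacious} and \ref{SecRelationships} succeed precisely because their commitments are fixed in advance and shown compatible with the diagonalization, and no such compatibility is established here. As written, this is a plausible research program, not a proof; the statement remains open.
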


\begin{Q}\label{ELHFI}
    Suppose that $K$ is a class of structures for which the isomorphism problem is $\Sigma^1_1$-complete and that $\mathbf{d}$ is loquaciously high for isomorphism for $K$.  Then is $\mathbf{d}$ loquaciously high for isomorphism?
\end{Q}

Note that Question \ref{ELHFI} can be answered in the affirmative for reticence, as we saw in Theorem \ref{RHFIComplete}.

Now we recall from \cite{HTMMM2017} the definition of a computable functor and that of effective natural isomorphism.  These two notions give a sense of encoding one class of structures in another that is sufficient to transfer many important properties in computable structure theory.

\begin{dfn} Let $K,K'$ be categories and $F:K \to K'$ a functor.  We say that $F$ is \emph{computable} if there exist two computable Turing functionals $\Phi,\Phi_*$ such that 
\begin{itemize}
    \item For every $\sB \in K$, the function computed by $\Phi^{D(\sB)}$ gives the atomic diagram of $F(\sB)$, and
    \item For every morphism $h:\sB \to \sC$ of elements of $K$, the function computed by $\Phi_*^{D(\sB) \oplus  h\oplus D(\sC)}$ is the function $F(h)$.
\end{itemize}\end{dfn}

We now show that reductions via appropriate computable functors also transfer the properties discussed in the present paper.  We note that the hypotheses of this theorem are weaker than the universal transformal reduction described in \cite{HTMMM2017}, so the theorem applies to any of the cases in which that relation is known.

\begin{thm}\label{functorial}
 Let $K$ and $\widetilde{K}$ be classes of structures with the classical morphisms between those structures.  Suppose we have a subclass $J \subseteq K$ and computable functors $F:K \to J$ and $G:J \to K$ such that there is a Turing functional $\Lambda$ such that for every $\sA \in K$, the functional $\Lambda$ computes an isomorphism $\Lambda^{\sA}: G\circ F(\sA) \to \sA$.  Then every degree that is (uniformly, loquaciously, reticently) high for isomorphism for $\widetilde{K}$ is (uniformly, loquaciously, reticently) high for isomorphism for $K$.
\end{thm}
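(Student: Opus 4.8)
The plan is to prove all three variants at once by means of a single ``transfer recipe'' that, from (indices for) a pair $\sA, \sB \in K$, manufactures a pair of computable structures in $\widetilde{K}$, hands that pair to the hypothesised witness for highness for isomorphism for $\widetilde{K}$, and turns the output back into a map $\sA \to \sB$. Explicitly: using the functional $\Phi$ of the computable functor $F$ (together with the $s$-$m$-$n$ theorem) we compute, uniformly in $\sA$ and $\sB$, indices for the computable structures $F(\sA)$ and $F(\sB)$; these lie in $J$, hence in $\widetilde{K}$. Since $F$ is a functor, $\sA \cong \sB$ forces $F(\sA) \cong F(\sB)$, so in that case the witness for $\widetilde{K}$ supplies an isomorphism $\phi \colon F(\sA) \to F(\sB)$. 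We apply the functional $\Phi_*$ of $G$ to get $G(\phi) \colon GF(\sA) \to GF(\sB)$ and output the composite $\Lambda^{\sB} \circ G(\phi) \circ (\Lambda^{\sA})^{-1}$, where $\Lambda^{\sA} \colon GF(\sA) \to \sA$ and $\Lambda^{\sB} \colon GF(\sB) \to \sB$ are the isomorphisms provided by $\Lambda$. Because each $\Lambda^{\sA}$ is an honest computable bijection (for every $\sA \in K$), the map $(\Lambda^{\sA})^{-1}$ is computable by unbounded search, so the composite is a $\mathbf{d}$-computable isomorphism from $\sA$ to $\sB$. Everything here is uniform, so this already settles the \emph{uniform} case.

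For the \emph{reticent} case the extra ingredient is that the retraction $GF \cong \mathrm{id}_K$ makes $F$ \emph{reflect} isomorphism: if $F(\sA) \cong F(\sB)$ then $\sA \cong GF(\sA) \cong GF(\sB) \cong \sB$. Hence applying a reticent witness for $\widetilde{K}$ to the pair $F(\sA), F(\sB)$ gives a convergent computation exactly when $F(\sA) \cong F(\sB)$, that is, exactly when $\sA \cong \sB$; in the negative case $\phi$ is never produced, so the composite above is nowhere defined, which is precisely what reticence for $K$ demands.

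The \emph{loquacious} case is where the real work lies, and where I expect the main obstacle. The witness $\Theta$ for $\widetilde{K}$ now returns a \emph{total} function $\phi \colon F(\sA) \to F(\sB)$ even when $\sA \not\cong \sB$; but then $\phi$ need not be a morphism, and $\Phi_*$ promises nothing about $G(\phi)$ off morphisms, while failure of $\phi$ to be surjective cannot be recognised at any finite stage. The remedy is to also ask $\Theta$ for the function $\phi'$ it assigns to the reversed pair $F(\sB), F(\sA)$ (again total, and, by the definition of loquacious highness for isomorphism for a class, genuinely equal to $\phi^{-1}$ whenever $\sA \cong \sB$). We then dovetail the computation of $\Lambda^{\sB} \circ G(\phi) \circ (\Lambda^{\sA})^{-1}$ on successive inputs with a search for finite evidence that the $\Pi^0_1$ statement ``$\phi$ and $\phi'$ are both structure-preserving and $\phi \circ \phi' = \phi' \circ \phi = \mathrm{id}$'' fails. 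If no such evidence ever appears, the statement holds, so $\phi$ really is an isomorphism $F(\sA) \to F(\sB)$, $\Phi_*$ computes $G(\phi)$ totally, and the composite is a genuine isomorphism $\sA \to \sB$; if evidence does appear, then $\phi$ is not an isomorphism, so (by reflection of isomorphism) $\sA \not\cong \sB$, and we are licensed to extend the output by $0$'s to a total function. The one thing to check is that in the non-isomorphic case the search does succeed, and it must: were the $\Pi^0_1$ statement true, $\phi$ would be a structure-preserving bijection with structure-preserving inverse, hence an isomorphism, contradicting $F(\sA) \not\cong F(\sB)$. Finally, since we use the same $\widetilde{K}$-witness in both directions, functoriality of $G$ transfers the inverse-symmetry clause $\Theta(i,j) = \Theta^{-1}(j,i)$ from $\widetilde{K}$ to $K$. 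Assembling these observations, together with the routine uniformity bookkeeping (obtaining indices for $F(\sA), F(\sB)$ and for the functorial images), completes the proof; the loquacious totality argument is the only nonroutine part.
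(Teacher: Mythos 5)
Your proposal is correct and takes essentially the same route as the paper's proof: push the pair through $F$, hand the images to the $\widetilde{K}$-witness, pull the resulting map back through $G$, and conjugate by $\Lambda$, handling loquaciousness exactly as the paper does by dovetailing the computation with a $\Pi^0_1$ check that the two candidate maps are mutually inverse embeddings and switching to garbage output upon seeing a failure. Your explicit observation that the retraction $G\circ F\cong\mathrm{id}_K$ forces $F$ to reflect isomorphism (which is what makes the reticent and loquacious cases go through) is a detail the paper leaves implicit, but it is not a different argument.
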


\begin{proof} Let $\mathbf{d}$ be high for isomorphism for elements of $\widetilde{K}$, and let $\Lambda$ witness the effective natural isomorphism to the identity.  We will show that $\mathbf{d}$ is high for isomorphism for elements of $K$.  

Let $\sA_1,\sA_2$ be isomorphic computable objects of $K$ with $\sB_i =F(\sA_i)$.  Now $\mathbf{d}$ computes an isomorphism $f:\sB_1 \to \sB_2$.  Since $G$ is computable, $G(f)$ is computable from $f$.  Since $G$ is a functor, $G(f):G(\sB_1) \to G(\sB_2)$ is an isomorphism, and $\mathbf{d}$ computes it.   Then $\Lambda^{\sA_2} \circ G(f) \circ \left(\Lambda^{\sA_1}\right)^{-1}: \sA_1 \to \sA_2$ is an isomorphism and still computable from $\mathbf{d}$.

The additions of uniform and reticent features are clear (e.g.\ if $\mathbf{d}$ computes the required things uniformly, the rest of the process is also uniform).  Loquaciousness is more subtle, since $G(f)$ need not be total if $f$ is garbage.  Given $f$ and $g$ which are possibly inverse isomorphisms, we compute $G(f)$ (and thus $\Lambda^{\sA_2} \circ G(f) \circ \left(\Lambda^{\sA_1}\right)^{-1}$) while simultaneously checking if $f$ and $g$ are inverses and $f$ is an embedding; if we see either fail, we switch to outputting garbage.
\end{proof}

\begin{cor} The following classes each have the property that every degree that is (uniformly, loquaciously, reticently) high for isomorphism for that class is (uniformly, loquaciously, reticently) high for isomorphism:
\begin{enumerate}
    \item symmetric, irreflexive graphs,
    \item partial orderings,
    \item lattices,
    \item rings (with zero-divisors),
    \item integral domains of arbitrary characteristic,
    \item commutative semigroups, and
    \item 2-step nilpotent groups.
\end{enumerate}\end{cor}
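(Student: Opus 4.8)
The plan is to obtain each case as an instance of Theorem~\ref{functorial}, taking $\widetilde{K}$ to be the class in question, $K$ to be the class of all computable structures, and $J \subseteq \widetilde{K}$ to be the image of a standard coding functor. Recall first that ``high for isomorphism'' without qualification means high for isomorphism for the class of all computable structures (uniformized by a fixed enumeration of all computable structures in a universal signature), so the content to be proved is exactly: for each listed class $C$, highness for isomorphism for $C$ implies highness for isomorphism for the class of all structures.

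For symmetric, irreflexive graphs this is immediate once we recall the classical fact, due to Hirschfeldt, Khoussainov, Shore, and Slinko and reformulated functorially in the framework of \cite{HTMMM2017}, that there is a computable functor $F$ from the class of all structures into the class of symmetric irreflexive graphs together with a computable functor $G$ in the other direction such that $G \circ F$ is effectively naturally isomorphic to the identity; indeed one can arrange the witnessing $\Lambda$ so that $\Lambda^{\sA}\colon G\circ F(\sA)\to\sA$ is computed uniformly in the atomic diagram of $\sA$. Letting $J$ be the image of $F$, the hypotheses of Theorem~\ref{functorial} are met with $\widetilde{K}$ the class of graphs and $K$ the class of all structures, and the conclusion follows, the uniform, loquacious, and reticent refinements being handled by the theorem itself.

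For the remaining six classes we use the analogous universality results: each of partial orderings, lattices, rings with zero-divisors, integral domains of arbitrary characteristic, commutative semigroups, and 2-step nilpotent groups is known --- again from the work of Hirschfeldt, Khoussainov, Shore, and Slinko, recast as computable functors (equivalently, effective interpretations) in the sense discussed just before Theorem~\ref{functorial} --- to admit a computable functor from the class of symmetric irreflexive graphs into it, together with a computable functor back and an effective natural isomorphism witnessing that the round trip is (effectively isomorphic to) the identity on graphs. Since computable functors compose to computable functors and the corresponding effective natural isomorphisms compose, composing with the graph coding of the previous paragraph yields, for each such $C$, functors $F\colon K \to J \subseteq C$ and $G\colon J \to K$ and a functional $\Lambda$ meeting the hypotheses of Theorem~\ref{functorial}; one may equally well cite a direct coding of all structures into $C$. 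Applying the theorem gives the result in each case.

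The only real work is the verification that the classical codings of Hirschfeldt, Khoussainov, Shore, and Slinko genuinely are computable functors carrying an effective natural isomorphism back to the identity, in the precise sense of the definitions of this section: the coding must act effectively and functorially on morphisms, not merely on objects, and the natural transformation must be given by a single Turing functional uniform over the class. This is the expected obstacle, but it is routine and is already carried out in \cite{HTMMM2017} and the surrounding literature, where exactly these codings are presented as computable functors. Since the hypotheses of Theorem~\ref{functorial} are, as noted there, weaker than a universal transformal reduction, the corollary follows in every case in which such a reduction is known --- in particular for all seven classes listed.
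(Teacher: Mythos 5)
Your proposal is correct and follows essentially the same route as the paper: both apply Theorem~\ref{functorial} to the Hirschfeldt--Khoussainov--Shore--Slinko codings, citing \cite{HTMMM2017} for the verification that these are computable functors with an effective natural isomorphism back to the identity (indeed uniform transformal reductions, which is stronger than the theorem's hypotheses). The paper's proof is just a one-line citation of that fact, so your added detail about composing through graphs is merely an expansion of the same argument.
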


\begin{proof}
    It is noted in \cite{HTMMM2017} that the reductions described in \cite{hkss02} are uniform transformal reductions, which, as we have said, is stronger than the hypotheses for Theorem \ref{functorial}.
\end{proof}

\section{Degrees that will always give it their best}\label{SecCollegiate}

Perhaps the most attractive failure mode to hope for is that we might be able to compute, if not an isomorphism, at least the closest thing available.  We might expect that computing, for instance, $\Sigma_1$-elementary embeddings would be relatively easy.  We will see that even computing embeddings with a very slight degree of elementarity is as difficult as computing isomorphisms.  We even find that every degree that is high for isomorphism can compute $\Sigma_\alpha$-elementary embeddings, if they exist, for any $\alpha$.  This result will be Proposition \ref{CollegiateCollapse}.

\begin{dfn}
\begin{enumerate}
    \item     A degree $\mathbf{d}$ is \emph{$\alpha$-collegiate} if, for any ordinal $\beta \le \alpha$ and any computable structures $\+A$ and $\+B$, $\*d$ computes a $\Sigma_\beta$-elementary embedding of $\+A$ into $\+B$ if such an embedding exists. 

    \item A degree $\mathbf{d}$ is said to be \emph{computably collegiate} if it is $\alpha$-collegiate for every computable ordinal $\alpha$.
\end{enumerate}

\end{dfn}

The following result is expressed in the language of back-and-forth relations as defined, for instance, in Section 15.1 of \cite{ak00}.

\begin{prop}\label{prop:omega1ck.implies.isomorphic}
    For computable structures, $\sA \equiv_{\omega_1^{ck}} \sB$ implies $\sA \cong \sB$.
\end{prop}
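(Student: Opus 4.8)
The plan is to combine the classical back-and-forth characterization of isomorphism for countable structures with the fact that, for computable structures, the back-and-forth relations stabilize below $\omega_1^{ck}$.

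Recall first that the relations $\le_\gamma$ (equivalently $\equiv_\gamma$) between finite tuples of $\sA$ and finite tuples of $\sB$ of the same length are non-increasing in $\gamma$: if $\gamma \le \gamma'$ then $\le_{\gamma'} \,\subseteq\, \le_\gamma$. Since there are only countably many such pairs, this descending transfinite sequence of relations is eventually constant; write $\le_\infty$ for the stable relation. Because $\le_\infty$ agrees with its own ``one more round'' refinement, it has the back-and-forth extension property, and so the usual back-and-forth construction shows that for countable $\sA$ and $\sB$ one has $\sA \equiv_\infty \sB$ (the empty tuples are $\le_\infty$-related) if and only if $\sA \cong \sB$. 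Hence it suffices to prove that for computable $\sA$ and $\sB$ we have $\le_{\omega_1^{ck}} \,=\, \le_\infty$. As $\le_{\omega_1^{ck}}$ is the intersection of all $\le_\gamma$ with $\gamma < \omega_1^{ck}$, this says exactly that every pair of tuples ever dropped from the relations is dropped at some stage strictly below $\omega_1^{ck}$.

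This is where computability enters. Fix a pair $p = (\bar a, \bar b)$ and let $T_p$ be the tree associated to the back-and-forth game: its nodes record a finite play in which the spoiler successively picks tuples in one structure and the duplicator responds in the other, a branch being kept only so long as the quantifier-free types along it continue to agree (with the appropriate switch of sides at each round). Since $\sA$ and $\sB$ are computable, their atomic diagrams and hence $\le_0$ are computable, so $T_p$ is a computable tree, uniformly in $p$. An induction on ordinals shows that $T_p$ is well-founded exactly when $\bar a \not\le_\infty \bar b$, and that in that case the stage at which $p$ is dropped is at most the ordinal rank of $T_p$ (up to a harmless rescaling fixed by the indexing convention for the back-and-forth relations). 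A computable well-founded tree has rank strictly below $\omega_1^{ck}$, and $\omega_1^{ck}$ is closed under that rescaling, so every dropped pair is dropped before stage $\omega_1^{ck}$.

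Therefore $\le_{\omega_1^{ck}} \,=\, \le_\infty$, so $\sA \equiv_{\omega_1^{ck}} \sB$ yields $\sA \equiv_\infty \sB$ and hence $\sA \cong \sB$. This is in effect Nadel's bound on the Scott rank of a hyperarithmetic structure, presented here in self-contained form, and one could instead simply cite it. I expect the one delicate point to be the precise construction of $T_p$ and the verification that its rank controls the stage at which $p$ is dropped; this is routine but must respect the alternation of the two structures in the definition of the back-and-forth relations. I would note, finally, that no appeal to $\Sigma^1_1$-boundedness is needed here: each dropped pair is handled on its own, and although the stabilizing stage itself may be as large as $\omega_1^{ck}$ (which has countable cofinality), this is harmless since $\le_{\omega_1^{ck}}$ is the intersection over all smaller stages.
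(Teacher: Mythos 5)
Your overall frame is fine---the stabilized relation $\le_\infty$ does have the back-and-forth extension property, and the target lemma ($\le_{\omega_1^{ck}}=\le_\infty$ for computable structures, i.e.\ Nadel's bound) is true---but the step you flag as ``routine'' is precisely where the proof breaks. The tree $T_p$ of \emph{all} finite plays along which the quantifier-free types continue to agree is not well-founded exactly when $\bar a\not\le_\infty\bar b$. Take $\sA=(\mathbb{Q},<)$, $\sB=(\mathbb{Z},<)$ and $p$ the pair of empty tuples: these structures are not even $2$-equivalent, yet $T_p$ has an infinite branch (any play in which the two players cooperatively extend an order-preserving finite partial map forever). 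Well-foundedness of your $T_p$ says the quantifier-free types must clash after finitely many rounds \emph{no matter how either player moves}, which is far stronger than $\bar a\not\le_\infty\bar b$. The quantity that actually controls the stage at which $p$ is dropped is the game rank: since $\bar a\not\le_{\gamma+1}\bar b$ iff there is a spoiler move $\bar d$ such that \emph{every} duplicator response $\bar c$ gives $\bar b\bar d\not\le_\gamma\bar a\bar c$, the dropping stage is computed by a minimum over spoiler moves and a supremum over duplicator responses. That is the rank of the play tree pruned by an optimal spoiler strategy, and such a strategy is in general not computable (picking a winning move amounts to deciding membership in a $\Pi^1_1$ set of positions), so the pruned tree is not a computable tree and you get no $\omega_1^{ck}$ bound on its rank for free.

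Closing this gap is exactly where $\Sigma^1_1$-bounding enters, contrary to your closing remark that it can be avoided. The standard argument---and the paper's proof, which is three lines long---is global rather than rank-theoretic: the set of indices of computable linear orders along which the duplicator ($\exists$) has a winning strategy for the Ehrenfeucht--Fra\"iss\'e game from position $p$ is $\Sigma^1_1$; if it contains every computable well-order, then by $\Sigma^1_1$-bounding it contains an ill-founded order, and letting the spoiler descend along an infinite descending sequence while enumerating both universes turns the duplicator's strategy into an isomorphism extending $p$. Applied to the empty tuples this is the paper's proof of the proposition; applied to an arbitrary pair it yields your lemma. So either run that overspill argument (pairwise or just for the empty tuples), or cite Nadel's theorem outright; the tree-rank computation as written does not go through.
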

\begin{proof}
    The $\exists$ player has a winning strategy for the Ehrenfeucht-Fra\"iss\'e game of length $\alpha$ for every computable $\alpha$.  By $\Sigma^1_1$-bounding, they must have a winning strategy for a game of some ill-founded length.  By having the $\forall$ player play along a descending sequence, we can extract an isomorphism.
\end{proof}

\begin{prop}\label{CollegiateHFP} Let $\alpha$ be a computable ordinal.  Then every $\alpha$-collegiate degree is high for paths.\end{prop}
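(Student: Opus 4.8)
The plan is to reduce to the mechanism already used in the second half of the proof of Proposition~\ref{pathsvsisom}. Fix a nonempty $\Pi^0_1$ class of functions, presented as $[T]$ for an ill-founded computable tree $T \subseteq \omega^{<\omega}$; it suffices to produce a $\mathbf{d}$-computable path through $T$. Let $S$ be a computable rank-saturated tree of infinite rank possessing a computable path $f$, as in fact~(1) recalled in the proof of Proposition~\ref{pathsvsisom}. Since $T$ has a path, $T \star S$ is again a computable rank-saturated tree of infinite rank by fact~(3), and hence $S \cong T \star S$ by fact~(2).

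The key point is that such an isomorphism is, in particular, a $\Sigma_\beta$-elementary embedding for every $\beta$; so for any $\beta \le \alpha$ — and in particular for $\beta = 0$, where a $\Sigma_\beta$-elementary embedding is simply an embedding — there exists a $\Sigma_\beta$-elementary embedding $S \to T \star S$. As $S$ and $T \star S$ are computable structures and $\mathbf{d}$ is $\alpha$-collegiate, $\mathbf{d}$ computes some such embedding $g \colon S \to T \star S$. I would then extract a path exactly as in Proposition~\ref{pathsvsisom}: since $g$ preserves the tree order, $\big(g(f\upharpoonright n)\big)_{n \in \omega}$ is a chain of nodes of $T \star S$ of strictly increasing length, so its union is a path through $T \star S$ computable from $g$ and $f$, and projecting this path down yields a path through $T$. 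As the $\Pi^0_1$ class was arbitrary, $\mathbf{d}$ is high for paths.

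There is no serious obstacle here, since the facts about rank-saturated trees and the projection $[T \star S] \to [T]$ are precisely those already invoked for Proposition~\ref{pathsvsisom}. The one point that warrants care is verifying that the extraction step goes through when $g$ is merely an embedding rather than an isomorphism: this causes no trouble, because carrying the computable path of $S$ across and then projecting only use that $g$ preserves the tree order, together with the structure of $T \star S$, and never use surjectivity of $g$.
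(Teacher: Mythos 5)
Your proposal is correct and follows essentially the same route as the paper: both reduce to the rank-saturated tree construction from Proposition~\ref{pathsvsisom} (i.e., Proposition 2.6 of \cite{CFT2023}) and observe that a $\Sigma_0$-elementary embedding, i.e., a mere embedding of $S$ into $T \star S$, already suffices to carry the computable path of $S$ to a path of $T \star S$ and hence, by projection, to a path of $T$. Your added remark that the extraction step never uses surjectivity of $g$ is exactly the point the paper makes.
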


\begin{proof}
This follows the proof of Proposition 2.6 in~\cite{CFT2023}, where we showed that a high for isomorphism degree is high for paths.  In that proof, we constructed isomorphic trees and used the fact that an isomorphism between the trees carries a path through one tree to a path through the other tree.  Since the trees are isomorphic, there is a $\Sigma_0$-elementary embedding (that is, an embedding) from one to the other, and an embedding will also carry a path through one tree to a path through the other.
\end{proof}

We observe that, as a consequence of Proposition \ref{CollegiateHFP}, every $\alpha$-collegiate degree is also high for descending sequences.

\begin{prop} Every computably collegiate degree is $\alpha$-collegiate for every ordinal $\alpha$.
\end{prop}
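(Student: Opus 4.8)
The plan is to reduce to $\Sigma_\beta$-elementary embeddings with $\beta \ge \omega_1^{ck}$ and then run the $\Sigma^1_1$-bounding argument from the proof of Proposition~\ref{prop:omega1ck.implies.isomorphic}, being careful about effectivity so that highness for paths alone suffices to carry out the extraction. Fix a computably collegiate $\mathbf{d}$; by Proposition~\ref{CollegiateHFP} and the remark following it, $\mathbf{d}$ is high for paths and high for descending sequences, and only the former will actually be used. It suffices to show that for every ordinal $\beta$ and all computable structures $\sA,\sB$, if a $\Sigma_\beta$-elementary embedding $\sA \to \sB$ exists then $\mathbf{d}$ computes one; for $\beta < \omega_1^{ck}$ this holds because $\mathbf{d}$ is $\beta$-collegiate, so assume $\beta \ge \omega_1^{ck}$. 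By monotonicity of elementarity it is enough to have $\mathbf{d}$ compute an embedding that is $\Sigma_\gamma$-elementary for \emph{every} ordinal $\gamma$ (equivalently, an $L_{\infty\omega}$-elementary embedding), since any such embedding is in particular $\Sigma_\beta$-elementary.

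I would then work with the two-sided back-and-forth game for embeddings attached to the back-and-forth relations of Section~15 of~\cite{ak00}: the players alternate which of $\sA,\sB$ is being played in, $\forall$ also names a strictly decreasing parameter from a fixed linear order, and the length-$\gamma$ version is won by $\exists$ exactly when the relevant relation holds; a $\Sigma_\gamma$-elementary embedding $\sA \to \sB$ gives $\exists$ a winning strategy. Since $\beta \ge \omega_1^{ck}$, the hypothesis yields such an embedding, hence a winning strategy for $\exists$ in the length-$\gamma$ game, for every computable $\gamma$. The statement ``$\exists$ wins the length-$L$ game'' is $\Sigma^1_1$ uniformly in a presentation of the linear order $L$, and it holds for every computable well-order, so $\Sigma^1_1$-bounding --- precisely as in the proof of Proposition~\ref{prop:omega1ck.implies.isomorphic} --- produces an ill-founded computable linear order $L$ for which $\exists$ wins the length-$L$ game.

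To extract the embedding effectively, fix computable enumerations of the universes of $\sA$ and $\sB$ and require $\forall$ to play along these enumerations on the appropriate turns, so that a play of the length-$L$ game is determined by $\forall$'s descending chain through $L$ together with $\exists$'s structural responses. Let $T$ be the tree of all finite such plays in which $\exists$ has not yet lost. The ordering of $L$, the two enumerations, and the not-yet-lost condition (agreement of finitely much atomic data) are computable, so $T$ is a computable tree on $\omega^{<\omega}$; and $T$ is ill-founded, since letting $\forall$ descend along an infinite chain through $L$ while $\exists$ follows a winning strategy gives an infinite branch. As $\mathbf{d}$ is high for paths, $\mathbf{d}$ computes a branch of $T$. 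Along any branch the parameters played by $\forall$ form an infinite descending sequence, so the play is infinite and $\forall$ has enumerated all of both structures; hence --- exactly as the infinite play in Proposition~\ref{prop:omega1ck.implies.isomorphic} assembles an isomorphism --- the map read off the branch is an embedding of $\sA$ into $\sB$ preserving all infinitary formulas, and in particular a $\Sigma_\beta$-elementary one, computable from the branch and so from $\mathbf{d}$. As $\beta$ was arbitrary, $\mathbf{d}$ is $\alpha$-collegiate for every ordinal $\alpha$.

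I expect the main obstacle to be in the middle step: one must use the genuinely two-sided back-and-forth game for embeddings, since in the naive one-sided game surviving an infinite play gives only a plain embedding, and one must verify that surviving an infinite play in which $\forall$ both descends through $L$ and enumerates the two structures really does assemble an $L_{\infty\omega}$-elementary embedding. For computable $\sA$ and $\sB$ it is enough to obtain elementarity at all levels below $\omega_1^{ck}$, the back-and-forth relations between computable structures having stabilized by then, and the machinery required for all of this is developed in~\cite{ak00}. The side conditions --- uniformity of the $\Sigma^1_1$ predicate in $L$ for the bounding step and computability of $T$ --- are routine.
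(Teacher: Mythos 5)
Your proof is correct, and its skeleton --- split at $\omega_1^{ck}$, then apply $\Sigma^1_1$-bounding to back-and-forth games of computable length --- is the same as the paper's; the difference is that you rebuild machinery the paper simply cites. After reducing to $\beta \ge \omega_1^{ck}$, the paper notes that the existence of a $\Sigma_\beta$-elementary embedding gives $\sA \equiv_{\omega_1^{ck}} \sB$, invokes Proposition~\ref{prop:omega1ck.implies.isomorphic} to conclude $\sA \cong \sB$, and then finishes in one line: by Proposition~\ref{CollegiateHFP} the degree is high for paths, hence high for isomorphism (Proposition 2.6 of \cite{CFT2023}), so it computes an isomorphism, which is $\Sigma_\beta$-elementary for every $\beta$. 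You instead inline an effectivized version of that argument: you run the bounding step yourself, build the game tree $T$, and extract the map from a $\mathbf{d}$-computable path. This works, but it amounts to re-proving ``high for paths implies high for isomorphism'' fused with Proposition~\ref{prop:omega1ck.implies.isomorphic}, and it saddles you with exactly the verification burden you flag at the end: an arbitrary branch of $T$ is only a locally consistent infinite play, not one following $\exists$'s winning strategy, so you must check that local (atomic) consistency plus exhaustion of both universes already assembles the desired map. It does --- with atomic ``not-yet-lost'' conditions and the alternation forcing surjectivity, the branch yields a bijective embedding, i.e.\ an isomorphism, which is why it is elementary at every level --- but this also shows the detour through embedding games is unnecessary: once $\beta \ge \omega_1^{ck}$ the structures are isomorphic, so you may as well ask $\mathbf{d}$ for an isomorphism outright.
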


\begin{proof} Let $\mathbf{d}$ be computably collegiate, $\beta$ be an ordinal, and $\sA, \sB$ be computable structures such that there is a $\Sigma_\beta$-elementary embedding of $\sA$ into $\sB$.  Thus $\sA \equiv_\beta \sB$.  We must show that $\mathbf{d}$ computes a $\Sigma_\beta$-elementary embedding of $\sA$ into $\sB$.

Of course, if $\beta < \omega_1^{CK}$ there is nothing left to do, since a computably collegiate degree has this property by definition.  For $\omega_1^{CK}\le \beta$, we have that $\sA \cong \sB$ by Proposition~\ref{prop:omega1ck.implies.isomorphic}.  By Proposition~\ref{CollegiateHFP}, $\*d$ computes an isomorphism from $\sA$ to $\sB$, which is a $\Sigma_\beta$-elementary embedding.
\end{proof}

Note that Proposition~\ref{CollegiateHFP} is subsumed by the next result.

\begin{prop}\label{CollegiateCollapse}
    For any computable $\alpha$, the following properties of a degree $\mathbf{d}$ are equivalent:
    \begin{enumerate}
        \item $\mathbf{d}$ is $\alpha$-collegiate,
        \item $\mathbf{d}$ is computably collegiate, and
        \item $\mathbf{d}$ is high for isomorphism.
    \end{enumerate}
    The same is true for the uniform versions of these notions.
\end{prop}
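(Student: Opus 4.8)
The plan is to establish the cycle of implications $(2)\Rightarrow(1)\Rightarrow(3)\Rightarrow(2)$. The implication $(2)\Rightarrow(1)$ is immediate from the definitions: a computably collegiate degree is $\beta$-collegiate for every computable $\beta$, and in particular for every $\beta\le\alpha$. For $(1)\Rightarrow(3)$, I would argue that being $\alpha$-collegiate (even for $\alpha=1$, or indeed $\alpha=0$, by Proposition~\ref{CollegiateHFP} and the discussion of high for paths versus high for isomorphism in \cite{CFT2023}) already forces highness for paths, hence highness for isomorphism, since \cite{CFT2023} shows those two notions coincide. Concretely: given computable isomorphic structures $\sM,\sN$, build the pair-of-partial-isomorphisms tree $I_{\sM,\sN}$ as in the proof of Proposition~\ref{pathsvsisom}, together with a second copy, so that an embedding (a $\Sigma_0$-elementary map, which is what a $\beta$-collegiate degree produces for any $\beta$) between the associated structures transports a path to a path; alternatively, invoke Proposition~\ref{CollegiateHFP} directly to get high for paths and then apply the equivalence of high for paths and high for isomorphism from \cite{CFT2023}.

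The substantive direction is $(3)\Rightarrow(2)$: every degree that is high for isomorphism is computably collegiate. Fix a computable ordinal $\beta$ and computable structures $\sA,\sB$ such that a $\Sigma_\beta$-elementary embedding of $\sA$ into $\sB$ exists. I want $\mathbf{d}$ to compute such an embedding. The natural move is to set up a tree whose paths code $\Sigma_\beta$-elementary embeddings $\sA\to\sB$ — the nodes being finite partial maps that respect the back-and-forth conditions up to level $\beta$ — but this tree is not computable, only hyperarithmetic, so highness for isomorphism does not directly apply. The fix is to replace this tree by a genuinely computable structure using the rank-saturation / Harrison-order machinery already cited in the proof of Proposition~\ref{pathsvsisom}: encode the "there exists a $\Sigma_\beta$-elementary embedding" assertion into the (ill-)foundedness of a computable tree or, better, into an isomorphism question between two computable structures. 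The cleanest route is: the existence of a $\Sigma_\beta$-elementary embedding $\sA\hookrightarrow\sB$ is a $\Sigma^1_1$ statement (for fixed computable $\beta$ and computable $\sA,\sB$), so by $\Sigma^1_1$-completeness of the isomorphism problem for the class of all computable structures (or for rank-saturated trees, via the $\star$-construction) we can build computable $\sC_0\cong\sC_1$ exactly when the embedding exists, with a uniform procedure that additionally lets us read the embedding off an isomorphism $\sC_0\to\sC_1$ together with fixed computable auxiliary data. Then $\mathbf{d}$, being high for isomorphism, computes the isomorphism and hence the embedding.

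The main obstacle — and the step to treat carefully — is the last one: arranging the coding so that an isomorphism between the computable structures $\sC_0,\sC_1$ actually yields the $\Sigma_\beta$-elementary embedding we want, not merely the bit of information that one exists. This is exactly the phenomenon exploited in Proposition~\ref{pathsvsisom}, where an isomorphism of rank-saturated trees is post-processed (carry a computable path over, project down) into a path of the original tree; here the analogous post-processing must extract an actual $\Sigma_\beta$-elementary map. I expect this to go through by a back-and-forth argument relativized to $\mathbf{d}$: from the isomorphism one reconstructs a winning strategy for $\exists$ in the length-$\beta$ Ehrenfeucht--Fra\"iss\'e game (compare Proposition~\ref{prop:omega1ck.implies.isomorphic}), and since $\beta$ is a fixed computable ordinal, playing that strategy against a fixed computable enumeration of $\sA$ produces a $\Sigma_\beta$-elementary embedding computably in $\mathbf{d}$ and the strategy. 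The uniform version follows because every step — the $\Sigma^1_1$-completeness coding, the appeal to uniform highness for isomorphism, and the back-and-forth extraction — is uniform in (an index for) $\beta$ and in the indices for $\sA,\sB$.
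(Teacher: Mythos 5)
Your implications $(2)\Rightarrow(1)$ and $(1)\Rightarrow(3)$ are fine and match the paper (Proposition~\ref{CollegiateHFP} plus the equivalence of high for paths and high for isomorphism). The problem is in $(3)\Rightarrow(2)$, and it stems from the moment you discard the direct approach. You write that the tree of finite partial maps respecting the level-$\beta$ back-and-forth conditions ``is not computable, only hyperarithmetic, so highness for isomorphism does not directly apply.'' That premise is too strong: a degree high for isomorphism is high for paths, i.e., it computes elements of nonempty $\Pi^0_1$ classes on \emph{Baire space}, and therefore it computes elements of nonempty $\Sigma^1_1$ classes as well, since a $\Sigma^1_1$ class $\{f : \exists g\, R(f,g)\}$ with $R$ closed is the projection of the $\Pi^0_1$ class $\{f\oplus g : R(f,g)\}$ --- compute a member of the latter and project. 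For a fixed computable $\beta$, the set of $\Sigma_\beta$-elementary embeddings of $\sA$ into $\sB$ is a $\Sigma^1_1$ (indeed $\Delta^1_1$) class of functions, uniformly in $\beta$, $\sA$, $\sB$, so $\mathbf{d}$ computes a member whenever one exists. That one observation is the entire content of $(3)\Rightarrow(2)$ in the paper, and it is uniform.

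The substitute you propose --- coding the existence of a $\Sigma_\beta$-elementary embedding into an isomorphism question $\sC_0\cong\sC_1$ via $\Sigma^1_1$-completeness and then reading the embedding back off the isomorphism --- leaves its crucial step unproved. A generic $\Sigma^1_1$-completeness reduction only transfers the \emph{truth value}; to recover a witness you must build the reduction so that the structures carry the witness, and your sketch of ``reconstruct a winning strategy for $\exists$ in the length-$\beta$ Ehrenfeucht--Fra\"iss\'e game from the isomorphism $\sC_0\to\sC_1$'' is exactly the point at issue, not an argument for it. The way to make such a witness-extraction work is to route the $\Sigma^1_1$ class through a computable tree on $\omega\times\omega$ whose paths project onto the embeddings (and then, if one insists on staying with isomorphisms rather than paths, apply the $T\star S$ machinery of Proposition~\ref{pathsvsisom}) --- which is to say, the repair of your argument is precisely the direct approach you rejected.
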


\begin{proof} That (1) and (2) each imply (3) is exactly Proposition \ref{CollegiateHFP}.  To show that a degree high for isomorphism is $\alpha$-collegiate, note that the $\Sigma_\beta$-elementary embeddings form a $\Sigma^1_1$ class, so that if $\mathbf{d}$ is high for isomorphism, it can compute an element of this class.  Moreover, this argument can be made for any computable $\alpha$, so we have that (3) implies (2).  All of these steps are uniform.
\end{proof}

While this result seems to eliminate the relevance of the collegiate degrees, they may still be hierarchical in restricted classes of structures.

\begin{prop}\label{collegiatehierarchy} There is a sequence of ordinals $(\alpha_i)$ cofinal in $\omega_1^{CK}$ such that for each $i<j$ there exists a degree $\mathbf{d}_{ij}$ which is $\alpha_i$-collegiate for reduced Abelian $p$-groups but not $\alpha_j$-collegiate for reduced Abelian $p$-groups.
\end{prop}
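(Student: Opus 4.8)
The plan is to use the well-understood structure theory of reduced Abelian $p$-groups, where the isomorphism type is captured by the Ulm invariants (a sequence of dimensions indexed by ordinals), together with Barker's analysis of the back-and-forth relations for such groups (see \cite{ak00}, Chapter on Abelian $p$-groups, or \cite{ashknight}): for reduced Abelian $p$-groups the relation $\sA \equiv_\beta \sB$ is controlled, up to a fixed finite multiple, by agreement of Ulm invariants below roughly the $\beta/2$ level. Since the $\Sigma_\beta$-back-and-forth relations for these groups are themselves arranged in a proper hierarchy as $\beta$ ranges over the computable ordinals, we can exploit this to separate the collegiate levels.

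First I would fix the standard coding of reduced Abelian $p$-groups and recall the effective description of $\equiv_\beta$ for them in terms of Ulm invariants. Then, for each computable ordinal $\gamma$, I would build (uniformly) a pair of computable reduced Abelian $p$-groups $\sA_\gamma, \sB_\gamma$ which satisfy $\sA_\gamma \equiv_\gamma \sB_\gamma$ but $\sA_\gamma \not\equiv_{\gamma'} \sB_\gamma$ for some slightly larger $\gamma'$, hence $\sA_\gamma \not\cong \sB_\gamma$; concretely one arranges the Ulm sequences to agree up to level $\gamma$ and disagree just above. This is the Abelian-$p$-group analogue of the "rank" constructions, and it is where the bulk of the verification lies: one must check both that a $\Sigma_{\alpha_i}$-elementary embedding $\sA_\gamma \hookrightarrow \sB_\gamma$ genuinely exists (so being $\alpha_i$-collegiate for this class forces the degree to compute one) and that no $\Sigma_{\alpha_j}$-elementary embedding exists (so the degree is \emph{not} $\alpha_j$-collegiate, since there is nothing to compute and collegiateness only demands output when an embedding exists — wait, that direction needs care).

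Here is the subtlety, and I expect it to be the main obstacle. Being $\alpha_j$-collegiate for the class is a requirement that \emph{if} a $\Sigma_{\alpha_j}$-elementary embedding exists then the degree computes one; a single pair with no such embedding does not witness failure. So to get a degree $\mathbf{d}_{ij}$ that is $\alpha_i$-collegiate but \emph{not} $\alpha_j$-collegiate for reduced Abelian $p$-groups, I would instead need a pair $\sA \hookrightarrow \sB$ for which a $\Sigma_{\alpha_j}$-embedding \emph{does} exist but any such embedding computes something the degree cannot, while every $\Sigma_{\alpha_i}$-embedding problem for the class remains solvable by $\mathbf{d}_{ij}$. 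The natural route is a priority/forcing construction (in the spirit of the separations in \cite{CFT2023} and of Proposition~\ref{collegiatehierarchy}'s cousins here): enumerate all pairs of computable reduced Abelian $p$-groups and all functionals, meet each "$\alpha_i$-collegiate" requirement positively by the uniform back-and-forth strategy running up to level $\alpha_i$ (this can be done below a sufficiently generic/low-ish degree because the relevant $\Sigma^1_1$-class membership is forced), and diagonalize against each functional on a carefully chosen pair realizing a genuine $\Sigma_{\alpha_j}$-embedding whose "extra" information (the disagreement of Ulm invariants at the level between $\alpha_i$ and $\alpha_j$) we withhold. Finally I would choose the $\alpha_i$ cofinal in $\omega_1^{CK}$ spaced far enough apart (factor-of-2-plus-slack, dictated by the Ulm-invariant/back-and-forth calibration) that the positive and negative requirements never conflict. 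The delicate points are (i) making the positive "$\alpha_i$-collegiate for the class" action genuinely low-cost — i.e.\ showing the back-and-forth embedding for Abelian $p$-groups at a computable level can be produced by the kind of degree we are building, not something as strong as high-for-isomorphism — and (ii) exhibiting, for each $j$, a concrete pair of computable reduced Abelian $p$-groups admitting a $\Sigma_{\alpha_j}$-elementary embedding but no isomorphism and no $\Sigma_{\alpha_{j}+1}$-embedding, so that the diagonalization target is nonempty yet the witness is not forced on us.
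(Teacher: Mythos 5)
You have the right raw materials (Ulm invariants, Barker's calibration of the back-and-forth relations for reduced Abelian $p$-groups) and you correctly isolate the genuine subtlety: to defeat $\alpha_j$-collegiateness you need a pair for which a $\Sigma_{\alpha_j}$-elementary embedding \emph{exists} but is hard to compute, not a pair for which none exists. That observation is exactly where the content of the proposition lives. However, the paper's route is much lighter than what you propose: no forcing or priority construction is used. One simply takes $\mathbf{d}_{ij}$ to be an appropriate iterate of the Turing jump, $\mathbf{0}^{(\delta)}$ with $\delta$ calibrated between the levels needed for $\alpha_i$ and for $\alpha_j$. The positive direction is that, by the effective Ulm analysis of \cite{BarkerAPG,CalvertAPG}, the $\Sigma_{\alpha_i}$-back-and-forth relations on computable reduced Abelian $p$-groups are computable from $\mathbf{0}^{(\delta)}$, and a standard back-and-forth construction run against those relations then produces the required $\Sigma_\beta$-elementary embeddings for all $\beta \le \alpha_i$. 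The negative direction is the matching completeness of those relations at level $\alpha_j$, which yields pairs admitting $\Sigma_{\alpha_j}$-embeddings only of degree above $\mathbf{0}^{(\delta)}$; spacing the $\alpha_i$ cofinally in $\omega_1^{CK}$ with enough slack (your ``factor of 2 plus slack'') is then all that remains.

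The concrete gap in your version is the treatment of the positive requirements. Being $\alpha_i$-collegiate for the class is not a requirement you can ``meet positively'' with a finite forcing condition: it demands that the final degree compute, for \emph{every} pair of computable reduced Abelian $p$-groups admitting a $\Sigma_{\alpha_i}$-embedding, such an embedding. That is a global positive computational demand, and generic objects built by forcing are precisely the ones that tend \emph{not} to compute such things; ``the relevant $\Sigma^1_1$-class membership is forced'' does not make the embedding computable from the generic. The repair is to abandon genericity for the positive side and code in the needed oracle outright, i.e., work above $\mathbf{0}^{(\delta)}$ --- at which point the forcing machinery is superfluous. Relatedly, your delicate point (ii) asks for a pair with a $\Sigma_{\alpha_j}$-embedding but no $\Sigma_{\alpha_j+1}$-embedding; that by itself proves nothing about computability of the $\Sigma_{\alpha_j}$-embeddings. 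What you need instead is the hardness half of Barker's theorem: pairs for which every $\Sigma_{\alpha_j}$-elementary embedding computes a suitably complete $\Pi^0_{2\alpha_j}$-type set.
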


\begin{proof} Computing $\alpha_i$ embeddings can be done by an Ulm back-and-forth argument up to the right ordinal height \cite{BarkerAPG,CalvertAPG}.\end{proof}

Finally, we ask about an added level of uniformity, which comes perhaps closest to our original intent with the collegiate degrees --- degrees which are uniformly high for isomorphism, with their collegiate character as a failure mode.

\begin{dfn} A degree $\mathbf{d}$ is said to be \emph{hyperuniformly $\alpha$-collegiate} if it is uniformly high for isomorphism and the uniform procedure witnessing this fact has the further property that if there is a $\Sigma_\alpha$-elementary embedding from $\sA$ to $\sB$, then that procedure outputs such an embedding (note that if $\sA \cong \sB$, then the isomorphism the procedure outputs is such a $\Sigma_\alpha$-elementary embedding).\end{dfn}

Certainly we know of these degrees that they must all be loquaciously high for isomorphism, and we know that any degree computing $\sO$ is hyperuniformly $\alpha$-collegiate for any computable $\alpha$.

\begin{prop} For any computable $\alpha$, the hyperuniformly $\alpha$-collegiate degrees are exactly the ones that can compute $\+O$.\end{prop}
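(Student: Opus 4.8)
The forward direction is essentially the remark already made before the statement: given indices for $\sM_i$ and $\sM_j$, both ``$\sM_i \cong \sM_j$'' and ``there is a $\Sigma_\alpha$-elementary embedding of $\sM_i$ into $\sM_j$'' are $\Sigma^1_1$ (for fixed computable $\alpha$, being a $\Sigma_\alpha$-elementary embedding is a hyperarithmetic condition on the function, so existence is $\exists f$ with a $\Delta^1_1$ matrix), so $\sO$ decides each of them, and when one holds the corresponding set of witnesses is a nonempty $\Sigma^1_1$ class of elements of Baire space, from which $\sO$ computes a member (it uniformly computes leftmost paths through ill-founded computable trees). Hence the procedure that checks first for an isomorphism, then for a $\Sigma_\alpha$-elementary embedding, and outputs the constant zero function otherwise is $\sO$-computable, total, and witnesses that $\mathbf d$ is hyperuniformly $\alpha$-collegiate.

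For the converse, the plan is to prove that a hyperuniformly $\alpha$-collegiate degree $\mathbf d$ enumerates every $\Pi^1_1$ set and every $\Sigma^1_1$ set; these two properties together are equivalent to $\mathbf d \ge_T \sO$, since they let $\mathbf d$ enumerate both $\sO$ and its complement (and, conversely, enumerating the complete set of either class yields all of that class). The $\Pi^1_1$ half is immediate from what is already available: $\mathbf d$ is, as noted, loquaciously high for isomorphism, hence loquaciously high for paths by Proposition~\ref{pathsvsisom}, hence enumerates every $\Pi^1_1$ set by Proposition~\ref{prop:lhfp_enumerates_O}. So the whole content is the $\Sigma^1_1$ half, which by Theorem~\ref{thm:reticence_all_the_same} is precisely the assertion that $\mathbf d$ is reticently high for paths: one must turn the procedure $\Phi$ witnessing hyperuniform $\alpha$-collegiateness into a procedure that computes a path of a computable tree $T$ when $T$ is ill founded and diverges on every input when $T$ is well founded.

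To do this I would pass to reduced Abelian $p$-groups, where a $\Sigma_\alpha$-elementary embedding has the concrete meaning of agreement of Ulm data below $\alpha$ (the back-and-forth machinery behind Proposition~\ref{collegiatehierarchy}, cf.\ \cite{BarkerAPG,CalvertAPG}) and where the isomorphism problem is already $\Sigma^1_1$-complete. The aim is to build, uniformly in a given $\Sigma^1_1$ set $S$, computable reduced Abelian $p$-groups $\sA_n,\sB_n$ with a $\Sigma_\alpha$-elementary embedding $\sA_n \to \sB_n$ exactly when $n \in S$, arranged so that for $n \notin S$ there is no embedding of $\sA_n$ into $\sB_n$ at all, and so that the finitely checkable conditions for a candidate map to be a $\Sigma_\alpha$-elementary embedding become satisfied once enough Ulm agreement has accumulated. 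One then runs $\Phi$ on $\sA_n,\sB_n$, outputting its values while simultaneously (i) verifying those conditions on longer and longer initial segments and (ii) running the $\Pi^1_1$, hence $\mathbf d$-enumerable, search for a witness that $n \notin S$; as soon as (ii) succeeds or (i) fails one diverges, and otherwise the output is the promised embedding. Applied to a complete $\Sigma^1_1$ set, this enumerates it.

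The crux, and the step I expect to be genuinely hard, is exactly this passage from ``$\Phi$ outputs a $\Sigma_\alpha$-elementary embedding whenever one exists'' — a positive promise, compatible with $\Phi$ emitting total but meaningless output when no embedding exists — to a procedure that is silent in the absence of an embedding. It cannot follow from loquacious or uniform highness for isomorphism alone: Theorem~\ref{LHFInotO} produces a loquaciously high for isomorphism degree $\mathbf d$ with $\mathbf d' \equiv_T \sO$, so in particular $\mathbf d \not\ge_T \sO$. The construction must therefore extract real mileage from the elementarity level $\alpha$ rather than from mere isomorphism, and the delicate point is to arrange the Ulm-theoretic obstructions inside the $\sA_n,\sB_n$ so that they become decidable against a candidate map in a way robust to the adversarial choice of $\Phi$.
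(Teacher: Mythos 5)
Your forward direction is fine and matches the remark the paper makes just before the proposition. The converse, however, is not proved: you reduce it to showing that $\mathbf d$ enumerates every $\Sigma^1_1$ set, explicitly label that step as the hard part, and then offer only a sketch that does not work as described. Two concrete problems: first, for $\alpha \ge 1$, being a $\Sigma_\alpha$-elementary embedding is not a ``finitely checkable'' condition that becomes satisfied on longer and longer initial segments --- it is roughly $\Pi^0_{2\alpha}$, so you cannot verify it against a candidate map stage by stage and diverge upon failure. Second, even granting the verification, your scheme (emit $\Phi$'s values until either the verification fails or the $\mathbf d$-enumerable search witnesses $n \notin S$) does not enumerate $S$: for $n \notin S$ the procedure still converges on finitely many inputs before any negative witness appears, so ``converges somewhere'' is not equivalent to $n \in S$, and no other $\Sigma^0_1(\mathbf d)$ event separating the two cases is identified. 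The missing idea is precisely how to extract positive information from a procedure that is only promised to behave well when an embedding exists.

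The paper sidesteps reticence entirely. Given an instance $\beta$ of the well-ordering problem (a computable linear order that is either a well-order or a Harrison order), it builds $\+A$ from infinitely many copies of a rank-saturated tree $T_{\beta+1}$ together with copies of a tree $S_\alpha$ that is rank-saturated for ranks $\le \omega\alpha$, and $\+B$ from copies of $T_\beta$ and copies of $S_\alpha$. If $\beta$ is a Harrison order then $\+A \cong \+B$ and the procedure must output an isomorphism, which sends copies of $T_{\beta+1}$ to copies of $T_\beta$; if $\beta$ is a well-order there is no isomorphism (indeed no embedding of $T_{\beta+1}$ into $T_\beta$), but $T_{\beta+1}$ does $\Sigma_\alpha$-elementarily embed into $S_\alpha$, so a $\Sigma_\alpha$-elementary embedding $\+A \to \+B$ exists and the hyperuniform procedure must output one, which sends copies of $T_{\beta+1}$ to copies of $S_\alpha$. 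In \emph{both} cases the output is a genuine total map, and the two cases are distinguished by inspecting the image of the root of one copy of $T_{\beta+1}$ --- a single convergent computation. This \emph{decides} well-orderedness outright, hence computes $\sO$, with no need to enumerate $\Sigma^1_1$ sets, to pass through reticence, or to make the procedure silent anywhere. You should restructure the converse around a dichotomy of this kind, where the failure of isomorphism is deliberately compensated by a guaranteed $\Sigma_\alpha$-elementary embedding into a different, recognizable target.
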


\begin{proof}
    We show that such a degree can distinguish computable well-orders from Harrison orders and thus compute $\+O$.

    First, let $T_\infty$ be the computable rank saturated tree with a path. Let $S_\alpha$ be a tree of Scott rank $\omega_1^{ck}$, which is rank-saturated for ranks $\le \omega\alpha$, and let $T_\beta$ be a rank saturated tree of rank $\beta$, and similarly $T_{\beta+1}$.

    We build computable structures $\+A$ and $\+B$.  $\+A$ will consist of infinitely many copies of $T_{\beta+1}$ and infinitely many copies of $S_\alpha$, and  $\+B$ will consist of infinitely many copies of $T_\beta$ and infinitely many copies of $S_\alpha$.
    
    Note that if $\beta$ is a Harrison order, then $T_\beta \cong T_{\beta+1}$, so $\+A \cong \+B$, and thus our degree will provide an isomorphism mapping the copies of $T_{\beta+1}$ to copies of $T_\beta$. If instead $\beta$ is a well-order, then $T_\beta \not \cong T_{\beta+1}$ (in fact, there is no embedding of $T_{\beta+1}$ into $T_\beta$), so there is no isomorphism from $\+A$ to $\+B$, but $T_{\beta+1}$ $\Sigma_\alpha$-elementarily embeds into $S_\alpha$ (using Lemma 3.3 of \cite{CalvertKnightMillar2006}), so there is a $\Sigma_\alpha$-elementary embedding from $\+A$ to $\+B$.  So our degree will provide such an embedding, mapping the copies of $T_{\beta+1}$ to copies of $S_\alpha$.

    Thus, we can determine whether $\beta$ is a well-order by examining where the provided function sends some root of some copy of $T_{\beta+1}$.
\end{proof}

\section{Relationships Between These Notions}\label{SecRelationships}

We offer some existence results concerning reticence and loquaciousness.

\begin{prop}\label{CollegiateEq}
    There exist degrees of the following kinds:
    \begin{enumerate}
        \item Both reticently high for paths and loquaciously high for paths.
        \item Reticently high for paths but not loquaciously high for paths.
        \item Loquaciously high for paths but not reticently high for paths.
        \item High for paths, but neither reticently high for paths nor loquaciously high for paths.
    \end{enumerate}
\end{prop}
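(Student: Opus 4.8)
The plan is to reduce all four cases to one structural observation together with results already in hand. First, by Theorem~\ref{thm:reticence_all_the_same}, a degree is reticently high for paths exactly when it enumerates every $\Sigma^1_1$ set, equivalently, exactly when it enumerates $\overline{\sO}$ (which is $m$-complete among $\Sigma^1_1$ sets). By Proposition~\ref{prop:lhfp_enumerates_O}, every degree loquaciously high for paths enumerates $\sO$. Hence a degree that is both reticently and loquaciously high for paths has $\sO$ both c.e.\ and co-c.e.\ in it, so it computes $\sO$; conversely every $\mathbf{d}\ge_T\sO$ is reticently high for paths (it computes, hence enumerates, every $\Sigma^1_1$ set) and is loquaciously high for paths, since using $\sO$ one can, uniformly in $i$, first decide whether $T_i$ is ill founded and then output its leftmost path if so and $0^\omega$ otherwise, a total $\mathbf{d}$-computable witness. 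Thus ``reticently and loquaciously high for paths'' coincides with ``computes $\sO$'', and $\deg(\sO)$ settles case~(1).

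For case~(3) I would invoke Theorem~\ref{LHFInotO}, which provides a degree $\mathbf{d}$ that is loquaciously high for isomorphism --- hence loquaciously high for paths by Proposition~\ref{pathsvsisom} --- with $\mathbf{d}'\equiv_T\sO$; since $\mathbf{d}\not\ge_T\mathbf{d}'$, we get $\mathbf{d}\not\ge_T\sO$, so by the observation above $\mathbf{d}$ is not reticently high for paths. For case~(4) I would take a high-for-paths degree $\mathbf{d}$ with $\mathbf{d}'''\equiv_T\sO$ (such degrees exist; see \cite{CFT2023}). If $\mathbf{d}$ were reticently or loquaciously high for paths then $\sO$ would be c.e.\ in $\mathbf{d}$ (using $\overline{\sO}\equiv_T\sO$ in the reticent case), giving $\sO\le_T\mathbf{d}'$ and hence $\mathbf{d}'''\equiv_T\sO\le_T\mathbf{d}'$, which is impossible.

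Case~(2) is the only one needing a genuine construction: by the observation I must produce a degree that enumerates $\overline{\sO}$ (so is reticently high for paths) but does not enumerate $\sO$ (so a fortiori does not compute $\sO$, and in particular is not loquaciously high for paths). The plan is to force with conditions that are finite binary strings $\sigma$ such that $\sigma(\langle n,s\rangle)=1$ implies $n\in\overline{\sO}$, ordered by extension; being a condition is $\Sigma^1_1$. Appending a $1$ at a fresh $\langle n,s\rangle$ shows $E_n=\{\sigma:(\exists s)\,\sigma(\langle n,s\rangle)=1\}$ is dense for each $n\in\overline{\sO}$, so a sufficiently generic $X$ has $n\in\overline{\sO}\iff(\exists s)\,X(\langle n,s\rangle)=1$, which is a $\Sigma^0_1(X)$ enumeration of $\overline{\sO}$. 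The crux is meeting, for each $e$, the set $D_e=\{\sigma:\sigma\text{ forces }W_e^X\ne\sO\}$. Given a condition $\sigma$, let $U$ be the set of $y$ that some condition extending $\sigma$ enumerates into $W_e$; then $U$ is $\Sigma^1_1$. If $U\setminus\sO\ne\emptyset$, pick $y\in U\setminus\sO$ and a condition $\tau\supseteq\sigma$ with $y\in W_e^\tau$; then $\tau$ forces $y\in W_e^X\setminus\sO$, so $\tau\in D_e$. Otherwise $U\subseteq\sO$; since $\sO$ is $\Pi^1_1$-complete it is not $\Sigma^1_1$, so $\sO\ne U$, and any $y\in\sO\setminus U$ is never enumerated into $W_e$ by a condition extending $\sigma$; as every finite initial segment of a generic $X\supseteq\sigma$ is a condition extending $\sigma$, we get $y\notin W_e^X$ while $y\in\sO$, so $\sigma$ itself lies in $D_e$. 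Meeting all the $E_n$ and all the $D_e$ yields the required degree.

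I expect case~(2), and within it this last diagonalization, to be the main obstacle; what makes it go through cleanly is the observation that the set of elements forced into $W_e$ below a condition is $\Sigma^1_1$ and therefore cannot coincide with the properly $\Pi^1_1$ set $\sO$.
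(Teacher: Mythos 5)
Your argument is correct, and cases (1), (3), and (4) follow the paper's proof essentially verbatim: the characterization ``reticent $+$ loquacious $\iff$ computes $\sO$'' via Theorem~\ref{thm:reticence_all_the_same} and Proposition~\ref{prop:lhfp_enumerates_O}, Theorem~\ref{LHFInotO} for (3), and the high-for-paths degree three jumps below $\sO$ for (4). The only divergence is case (2): the paper simply cites Proposition~5.2 of \cite{CFT2023}, which already provides a reticently-high-for-paths degree strictly below $\sO$, and concludes via the computes-$\sO$ dichotomy; you instead give a self-contained forcing construction of a degree enumerating $\overline{\sO}$ but not $\sO$, and then rule out loquaciousness directly through Proposition~\ref{prop:lhfp_enumerates_O}. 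Your construction is sound --- the crux, that the set of elements forceable into $W_e^X$ below a condition is $\Sigma^1_1$ and therefore cannot equal the properly $\Pi^1_1$ set $\sO$, is exactly right, and it closely mirrors the paper's own forcing in the proposition showing a degree can enumerate $\sO$ without being loquaciously high for paths (with the roles of $\sO$ and $\overline{\sO}$ swapped). What your route buys is independence from the earlier paper for case (2); what the paper's route buys is brevity. One tiny stylistic point: in case (2) the inference ``does not enumerate $\sO$, so a fortiori does not compute $\sO$'' is a detour --- failing to enumerate $\sO$ already contradicts Proposition~\ref{prop:lhfp_enumerates_O} directly.
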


\begin{proof}
 The degrees that are reticently high for paths are exactly those that enumerate $\sO$'s complement (Theorem \ref{thm:reticence_all_the_same}); and a degree that is loquaciously high for paths enumerates $\sO$ (Proposition \ref{prop:lhfp_enumerates_O}). Thus, the degrees which have both of these properties must compute $\sO$. 
 Conversely, $\sO$ can solve the isomorphism problem and so determine whether to give an output or not, so a degree computing $\sO$ is both loquaciously and reticently high for paths.
 
Theorem \ref{LHFInotO} tells us that there are degrees that are loquaciously high for paths and strictly below $\sO$, and Proposition 5.2 of \cite{CFT2023} tells us that there are degrees strictly below $\sO$ that are reticently high for paths. By the previous paragraph, if $\mathbf{d}$ were both loquaciously and reticently high for paths, then $\mathbf{d}$ would compute $\sO$.  Consequently, there exist degrees of each kind that are not of the other.

Finally, there is a degree high for paths and three jumps below $\sO$ as per Proposition 2.13 of \cite{CFT2023}, so it can neither enumerate nor co-enumerate $\sO$ and is thus neither reticently nor loquaciously high for paths.
\end{proof}

In fact, we can strengthen part (4) of this result.

\begin{thm}
    There is a degree that is uniformly high for paths which is neither reticently high for paths nor loquaciously high for paths.
\end{thm}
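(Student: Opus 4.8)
The plan is to construct, by a forcing argument carried out below a fixed finite iterate of the jump of $\sO$, a set $D$ that is uniformly high for paths but such that $D$ enumerates neither $\sO$ nor $\overline{\sO}$ (that is, neither set is computably enumerable relative to $D$); writing $\mathbf{d}$ for its degree, this suffices. Indeed, if $\mathbf{d}$ were reticently high for paths then by Theorem~\ref{thm:reticence_all_the_same} it would enumerate every $\Sigma^1_1$ set, in particular the $\Sigma^1_1$-complete set $\overline{\sO}$; and if $\mathbf{d}$ were loquaciously high for paths then by Proposition~\ref{prop:lhfp_enumerates_O} it would enumerate the $\Pi^1_1$ set $\sO$. (It is worth noting that for uniformly high for paths degrees the latter is sharp: if $D$ is uniformly high for paths and enumerates $\sO$, then $D$ is already loquaciously high for paths, since one may run the uniform path-search in parallel with the $D$-enumeration of $\sO$ and, on an index $n$ whose tree is well-founded, switch to an arbitrary output the instant $n$ is enumerated. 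So keeping $\sO$ non-enumerable relative to $D$ is exactly what is needed to kill loquaciousness.)

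Uniform highness is obtained from a \emph{delayed-path} encoding. Let $(T_j)_{j\in\omega}$ list the computable trees on $\omega^{<\omega}$, so that $[T_j]$ is the $j$th $\Pi^0_1$ class of functions, and reserve a symbol $\$$. We arrange that $D=\bigoplus_j D^{[j]}$ has the following shape: when $T_j$ is ill-founded, $D^{[j]}$ is a genuine infinite path through $T_j$ with finitely many $\$$'s inserted between consecutive path-values; when $T_j$ is well-founded, $D^{[j]}$ is a finite string followed by $\$$'s forever. Let $\Phi_{\mathrm{main}}$ be the fixed functional which, on input $(j,x)$, reads $D^{[j]}$, discards $\$$'s, and returns the $x$th surviving value, waiting for it to appear. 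For ill-founded $T_j$ the map $x\mapsto\Phi_{\mathrm{main}}^D(j,x)$ is total and lies in $[T_j]$, so with $f(j)$ an index for $\Phi_{\mathrm{main}}(j,\cdot\,)$ we get that $D$ is uniformly high for paths; on a well-founded $T_j$, however, this same functional is \emph{partial}, because it eventually stalls inside the all-$\$$ tail. Crucially, the number of $\$$'s inserted at each step, and the finite string used for a well-founded block, are entirely free parameters.

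The forcing conditions are finite approximations of this form: each records an initial segment of finitely many blocks, with the side condition that the non-$\$$ part of each recorded block be a node of the corresponding $T_j$, extendible when $T_j$ is ill-founded. Extendibility is $\Sigma^1_1$ and hence $\sO$-decidable, and a few further jumps settle the remaining questions needed to navigate the construction, so it indeed runs below a fixed iterate of $\sO'$. The density requirements --- lengthen each block, and for ill-founded $T_j$ advance the recorded extendible node --- complete every block to an object of the required shape (using that an extendible node always has an extendible one-step extension and that $\$$'s may always be appended), so the generic $D$ is uniformly high for paths. To the remaining requirements --- for each $e$, that $W_e^D\neq\sO$ and $W_e^D\neq\overline{\sO}$ --- one applies the usual strategy: search for an extension of the current condition, and a number $n$, forcing $n$ into $W_e^D$ on the wrong side, and pass to it.

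The main obstacle is ruling out the rigid bad case, in which no such extension exists below some condition, so that the generic $D$ has $W_e^D=\sO$ (or $=\overline{\sO}$) unavoidably; this is a genuine danger, since the \emph{naive} encoding --- each ill-founded block carrying the leftmost path of $T_j$, each well-founded block carrying the dead end of the leftmost branch of $T_j$ --- does enumerate $\sO$ (test, for input $j$, whether the recovered values of $D^{[j]}$ agree with the computably obtained dead end of the leftmost branch of $T_j$, halting on agreement) and is therefore loquacious. What rescues the construction is that such an enumeration works only by exploiting a correlation between $T_j$ and the content of block $j$, and we are free to destroy that correlation: given $\Phi_e$, one selects a block that $\Phi_e$'s computation must consult, fixes its content --- a path, or a well-founded tail --- adversarially against $\Phi_e$ using only the freedom in the delays and in the well-founded string, thereby driving $\Phi_e$ into error or into a nonterminating wait; and if instead $\Phi_e$'s computation is insensitive to these parameters, then $W_e^D$ is, uniformly, $\Sigma^0_1$-like and so cannot equal the $\Pi^1_1$-complete $\sO$ nor the $\Sigma^1_1$-complete $\overline{\sO}$. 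Making this dichotomy precise, and coordinating the adversarial choices across the requirements and the density requirements, is the substance of the argument; granting it, a sufficiently generic $D$ witnesses the theorem.
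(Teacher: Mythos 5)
Your reduction at the top level is the same as the paper's: build an object that uniformly computes paths through all ill-founded $T_j$ while ensuring that neither $\sO$ nor $\omega\setminus\sO$ is c.e.\ in it, then invoke Theorem~\ref{thm:reticence_all_the_same} and Proposition~\ref{prop:lhfp_enumerates_O}. Your side remark that a uniformly-high-for-paths degree enumerating $\sO$ is automatically loquacious is also correct. But the proof has a genuine gap exactly where you say ``making this dichotomy precise \dots is the substance of the argument'': you have not explained how to force $n\notin W_e^D$ for some $n\in\sO$ (resp.\ $n\in\omega\setminus\sO$) in the case where every extension you can see threatens to put $n$ in. Your proposed dichotomy does not close this. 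The claim that if $\Phi_e$ is ``insensitive to the delays and the well-founded strings'' then $W_e^D$ is ``$\Sigma^0_1$-like'' is false: even with those parameters fixed, whether block $j$ ends in an all-$\$$ tail is exactly the $\Pi^1_1$-complete question of whether $T_j$ is well-founded, and which path an ill-founded block carries is itself non-arithmetical data, so the set of $n$ forceable into $W_e^D$ has no a priori low complexity. Deciding, from a finite condition, whether some legal completion puts $n$ into $W_e^D$ is itself a $\Sigma^1_1$ question about extendibility, which is precisely why the naive set of forceable $n$'s can in principle coincide with $\sO$ or its complement.

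The paper's proof supplies the missing machinery, and it is not a routine refinement. Conditions carry two extra components: a computable ordinal $\alpha$ and a low-for-$\omega_1^{ck}$ sequence of subtrees $\overline{S}$, with extensions required to move onto nodes of rank at least $\alpha$ and to stay inside the $S_e$. To defeat $W^p=\sO$ one compares the $\Sigma^1_1(\overline{S})$ set of $n$'s forceable via \emph{strong} extensions with $\sO$ (they differ because $\overline{S}$ is low for $\omega_1^{ck}$), and in the hard subcase uses an overspill argument with pseudo-ordinals plus $\Sigma^1_1$-bounding to find a computable $\gamma$ such that raising the ordinal component to $\gamma$ forces $n\notin W^p$. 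To defeat $W^p=\omega\setminus\sO$ one uses a $\Pi^1_1(\overline{S})$ definition quantifying over subtree-extensions $\overline{R}$ and the Gandy Basis Theorem to pick a low-for-$\omega_1^{ck}$ witness. Finally the paper works with a partial function $p$ under enumeration reducibility and passes to a Turing degree via (a strengthening of) Selman's theorem; your $\$$-padding of a total $D$ is a plausible substitute for that last step, but without an analogue of the $\alpha$ and $\overline{S}$ components your diagonalization requirements cannot be met as described.
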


\begin{proof}
    We fix $(T_e : e \in \omega)$ a standard listing of computable subtrees of $\omega^{<\omega}$. Our forcing conditions are triples $(\sigma, \alpha, \overline{S})$, where:
    \begin{itemize}
        \item $\sigma$ is a finite partial function from $\omega \times \omega$ to $\omega$. We will use the notation $\sigma^{[e]}$ where $\sigma^{[e]}(x) = \sigma(e, x)$. Each $\sigma^{[e]}$ will be defined on a finite initial segment of $\omega$ and will thus be an element of $\omega^{<\omega}$;
        \item If $T_e$ is ill founded, then $\sigma^{[e]}$ is extendable to a path through $T_e$;
        \item $\alpha$ is a computable ordinal;
        \item $\overline{S}$ is a sequence of trees $(S_e : e \in \omega)$ which is low for $\omega_1^{ck}$;
        \item For each $e$, either $S_e = T_e$ or $S_e$ is a nonempty subtree of $T_e$ without leaves; and
        \item $\sigma^{[e]}$ is on $S_e$.
    \end{itemize}

    A condition $(\tau, \beta, \overline{R})$ extends a condition $(\sigma, \alpha, \overline{S})$ if:
    \begin{enumerate}
        \item $\tau$ extends $\sigma$;
        \item $\beta \ge \alpha$;
        \item For each $e$, $R_e$ is a subtree of $S_e$; and
        \item\label{label:uhds_otp_property} For every $e$ with $\tau^{[e]} \neq \sigma^{[e]}$, the rank of $T_e^{\succeq \tau^{[e]}}$ is at least $\alpha$.  (Note that if $T_e$ has a path extending $\tau$, then this rank is $\infty$, which is always greater than $\alpha$.)
    \end{enumerate}
    Here $T_e^{\succeq \tau^{[e]}}$ denotes the tree $\{ \rho \in \omega^{<\omega}: \tau^{[e]}*\rho \in T_e\}$.

    We will take a sufficiently generic filter $\+F$, and then our final object will be the partial function
    \[
    p = \bigcup_{(\sigma, \alpha, \overline{S}) \in \+F} \sigma.
    \]
    More precisely, it will be the graph of this partial function.  Note that if $T_e$ is ill founded, then for any $n$ and any condition $(\sigma, \alpha, \overline{S})$, there is an extension $(\tau, \alpha, \overline{S})$ with $\tau^{[e]}$ having length at least $n$ --- that is, the set of conditions having such extensions is dense.  Thus, if $\+F$ is sufficiently generic, $p^{[e]}$ will be a path through $T_e$.

    As we will show, it will suffice to prove the theorem if $\+O \not \le_e p$ and $\omega \setminus \+O \not \le_e p$, where $\le_e p$ indicates enumeration reducibility to $p$'s graph.\footnote{In general, we identify partial functions with their graphs when considering enumeration reducibility and enumeration operators.}

    In our arguments, we will make use of two variant forms of extension, which we now define. 

    For $(\sigma, \alpha, \overline{S})$ a condition, we say that $\tau$ is a {\em strong extension} if it satisfies:
    \begin{itemize}
        \item $\tau$ extends $\sigma$;
        \item For every $e$, the string $\tau^{[e]}$ is on $S_e$;
        \item For every $e$ with $\tau^{[e]} \neq \sigma^{[e]}$, $\tau^{[e]}$ is extendable to a path through $T_e$.
    \end{itemize}
    So we are strengthening requirement~(\ref{label:uhds_otp_property}) of our definition of extension, and in fact $\alpha$ is irrelevant to this definition. 

    For $(\sigma, \alpha, \overline{S})$ a condition, we say that $\tau$ is a {\em weak extension} if it satisfies:
    \begin{itemize}
        \item $\tau$ extends $\sigma$;
        \item For every $e$, the string $\tau^{[e]}$ is on $S_e$;
        \item For every $e$ with $\tau^{[e]} \neq \sigma^{[e]}$, the rank of $T_e^{\succeq \tau^{[e]}}$ is at least $\alpha$.
    \end{itemize}

    When $(\tau, \alpha, \overline{S})$ is a condition, saying that $\tau$ is a weak extension of $(\sigma, \alpha, \overline{S})$ is equivalent to saying that $(\tau, \alpha, \overline{S})$ is an extension of $(\sigma, \alpha, \overline{S})$; the weakening comes from the fact that $(\tau, \alpha, \overline{S})$ may fail to be a condition because there might be $T_e$ which are ill founded although $\tau^{[e]}$ is not extendable to a path.  Thus, the ``weak" in ``weak extension" has the sense of a ``weak solution" to a differential equation.
    
    We make some observations that will be useful as we proceed.  
    
    \begin{enumerate}
        \item The set $\{ \tau : \text{$\tau$ strongly extends $(\sigma, \alpha, \overline{S})$}\}$ is $\Sigma^1_1(\overline{S})$.
        \item\label{item:weak_extension_sigma_1_1} The set $\{ \tau : \text{$\tau$ weakly extends $(\sigma, \alpha, \overline{S})$}\}$ is $\Sigma^1_1(\overline{S})$ uniformly in $(\sigma, \alpha, \overline{S})$.  All we need to say is that $\tau$ is an extension of $\sigma$, $\tau^{[e]}$ is on $S_e$ for each $e$, and for each $e$ with $\tau^{[e]} \neq \sigma^{[e]}$, $T_e^{\succeq\tau^{[e]}}$ has rank at least $\alpha$.  The last can be said by stating that there is a subtree and a ranking function from the subtree to $\alpha+1$ that gives the root of the subtree rank $\alpha$.
        \item For a fixed computable ordinal $\alpha$, the set $\{ \tau : \text{$\tau$ weakly extends $(\sigma, \alpha, \overline{S})$}\}$ is $\Delta^1_1(\overline{S})$. This is because $\*0^{(2\alpha+3)}$ is able to distinguish the computable trees with rank at least $\alpha$.
        \item Let $\Theta(\tau, \sigma, \alpha, \overline{S})$ be the $\Sigma^1_1$ formula defining weak extension as described in Observation (\ref{item:weak_extension_sigma_1_1}). If $\beta$ is a pseudo-ordinal and $\Theta(\tau, \sigma, \beta, \overline{S})$ holds, then $\tau$ in fact strongly extends $(\sigma, \alpha, \overline{S})$.
        \end{enumerate}

    \smallskip

\begin{claim} $\+O \not \le_e p$.  \end{claim}
    
\begin{proof}    Suppose $(\sigma, \alpha, \overline{S})$ is a condition and $W$ is an enumeration operator. 
    Consider
    \[
        X = \{ n : \exists \tau\, [\text{$\tau$ strongly extends $(\sigma, \alpha, \overline{S})$ and $n \in W^\tau$}]\}.
    \]
    Observe that $X$ is $\Sigma^1_1(\overline{S})$.  $\+O$ is $\Pi^1_1$, and thus $\Pi^1_1(\overline{S})$, but as $\overline{S}$ is low for $\omega_1^{ck}$, we have that $\+O$ cannot be $\Delta^1_1(\overline{S})$.  Thus $\+O \neq X$. So there is some $n \in X\setminus \+O$ or $n \in \+O\setminus X$.

    The first case is that there is some $n \in X \setminus \+O$.  Then fix a witnessing $\tau$, and the condition $(\tau, \alpha, \overline{S})$ ensures that $n \in W^p$ and thus $W^p \neq \+O$.

    The second case is that there is some $n \in \+O \setminus X$.  Fix this $n$.  Consider
    \[
        \{ \beta : \beta \ge \alpha \wedge \exists \tau\, [\text{$\tau$ weakly extends $(\sigma, \beta, \overline{S})$ and $n \in W^\tau$}]\}.
    \]
    This is a $\Sigma^1_1(\overline{S})$ set of ordinals.  It cannot contain any pseudo-ordinals, because then the witnessing $\tau$ would be a strong extension, contradicting $n \not \in X$.  Since $\overline{S}$ is low for $\omega_1^{ck}$, we may apply $\Sigma^1_1$-bounding to get some computable ordinal $\gamma$ such that there is no $\tau$ weakly extending $(\sigma, \gamma, \overline{S})$ with $n \in W^\tau$.  Then the condition $(\sigma, \gamma, \overline{S})$ ensures that $n \not \in W^p$, and thus $W^p \neq \+O$.

    We have shown that there is a dense set of conditions ensuring $W^p \neq \+O$.  It follows that for a sufficiently generic filter, $\+O \not \le_e p$.
\end{proof}

    \begin{claim} $\omega \setminus \+O \not \le_e p$.\end{claim}

\begin{proof}    For $\overline{R}$ a sequence of trees, we will also say $\overline{R}$ {\em extends} $(\sigma, \alpha, \overline{S})$ if $(\sigma, \alpha, \overline{R})$ extends $(\sigma, \alpha, \overline{S})$, though we do not require that $\overline{R}$ is low for $\omega_1^{ck}$.  Note that the class of such extensions is arithmetical in $\overline{S}$.

    Suppose $(\sigma, \alpha, \overline{S})$ is a condition and $W$ is an enumeration operator.
    Consider
    \[
        Y = \{ n : (\forall \overline{R} \text{ extending $(\sigma, \alpha, \overline{S})$})\, \exists \tau\, [\text{$\tau$ weakly extends $(\sigma, \alpha, \overline{R})$ and $n \in W^\tau$}]\}.
    \]
    Note that $\alpha$ is a fixed computable ordinal, so weak extension is $\Delta^1_1$.  Thus $Y$ is $\Pi^1_1(\overline{S})$.  As $\overline{S}$ is low for $\omega_1^{ck}$, we have $Y \neq \omega \setminus \+O$.  So there is some $n \in Y \cap \+O$ or $n \not \in Y \cup \+O$.

    In the first case, fix $n$ and define $\overline{R}$ as follows:
    \begin{itemize}
        \item If $S_e \neq T_e$ or $T_e$ is well-founded, $R_e = S_e$;
        \item Otherwise, let $R_e$ consist of all extendable nodes of $T_e$.
    \end{itemize}
    Note that if $T_e$ is ill founded, then $R_e \subseteq T_e$ contains no leaves, and so every string on $R_e$ is extendable to a path through $T_e$.  Now fix a witnessing $\tau$ for this $\overline{R}$.  Since every string on $R_e$ is extendable to a path through $T_e$, it follows that $(\tau, \alpha, \overline{S})$ is a condition, and so it is a condition extending $(\sigma, \alpha, \overline{S})$.  Further, it ensures $n \in W^p$, and thus $W^P \neq \omega \setminus \+O$.

    In the second case, fix $n$.  We know that
    \[
    \{ \overline{R} : \text{$\overline{R}$ extends $(\sigma, \alpha, \overline{S})$} \wedge \neg \exists \tau\, [\text{$\tau$ weakly extends $(\sigma, \alpha, \overline{R})$ and $n \in W^\tau$}]\}
    \]
    is nonempty, and we note that this class is hyperarithmetical in $\overline{S}$.  By the Gandy Basis Theorem, there is an element $\overline{R}$ which is low for $\omega_1^{ck}$.  So $(\sigma, \alpha, \overline{R})$ is a condition ensuring that $n \not \in W^p$, and thus $W^p \neq \omega \setminus \+O$.

    We have shown that there is a dense set of conditions ensuring $W^p \neq \omega \setminus \+O$.  It follows that for a sufficiently generic filter, $\omega \setminus \+O \not \le_e p$.
\end{proof}

    Thus there is a $p$ such that $\+O \not \le_e p$, $\omega \setminus \+O \not \le_e p$, and for every ill-founded tree $T_e$, the function $p^{[e]}$ gives a path through $T_e$.

    By a strengthening of Selman's Theorem (see e.g.,  Lemma IV.11 of \cite{Montalban1}), there is a Turing degree $\*d$ which enumerates (the graph of) $p$, but $\+O, \omega \setminus \+O \not \in \Sigma^0_1(\*d)$.  From the enumeration of $p$, the degree $\*d$ must be uniformly high for paths.  From Theorem~\ref{thm:reticence_all_the_same}, the degree $\*d$ is not reticently high for paths.  From Proposition~\ref{prop:lhfp_enumerates_O}, the degree $\*d$ is not loquaciously high for paths.  Thus, $\*d$ is our desired degree.
\end{proof}

Thus, we also have that there is a degree that is uniformly high for isomorphism that is neither loquaciously nor reticently high for isomorphism.

\section{Failure modes for non-lowness for isomorphism}\label{SecNonlow}

There is another related notion of structural separation of a degree from $\mathbf{0}$.  We say that a degree is \emph{low for isomorphism} if, whenever it can compute an isomorphism between two computable structures, there is already a computable isomorphism between them \cite{fs-lowim}.  Non-lowness for isomorphism is in this sense not precisely a ``highness" notion, but still identifies a structural class of degrees that are, in a certain sense, separated from $\mathbf{0}$.

We note that the major concepts of this paper trivialize for these degrees.

\begin{prop}  Every degree that is non-low for isomorphism is uniformly, reticently, and loquaciously non-low for isomorphism.
\end{prop}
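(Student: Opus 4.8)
The plan is to exploit the fact that non-lowness for isomorphism is a purely existential, single-pair property: $\mathbf{d}$ is non-low for isomorphism precisely when there is one pair of computable structures $\sA \cong \sB$ admitting a $\mathbf{d}$-computable isomorphism but no computable one. Since $\sA$ and $\sB$ are genuinely isomorphic, there is no ``bad case'' to manage, and the reticent, loquacious, and uniform refinements --- each of which only constrains how the witnessing procedure behaves, or fails to behave, on the ambient family of pairs --- can be arranged essentially for free around this one fixed success.

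Concretely, I would first unpack the hypothesis to obtain such structures $\sA = \sM_{i_0}$ and $\sB = \sM_{j_0}$ together with a $\mathbf{d}$-computable isomorphism $g \colon \sA \to \sB$; fix $D \in \mathbf{d}$ with $g \le_T D$ and an index $e$ with $g = \{e\}^D$. For the uniform version I would use the fixed oracle $D$ together with a total computable function returning $e$ at $(i_0, j_0)$ (its behaviour on other pairs being irrelevant, or filled in by the usual search for a computable isomorphism); since $\{e\}^D = g$ already witnesses non-lowness, this is a uniform witness. For the reticent version I would take the $\mathbf{d}$-computable $F$ with $F(i_0, j_0) = g$ and $F(i,j)$ divergent on every input for every other pair: this converges to an isomorphism on a pair with no computable isomorphism and diverges everywhere off the witness (in particular on every non-isomorphic pair), so the divergence-off-$I$ condition holds. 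For the loquacious version I would instead set $F(i_0, j_0) = g$ and let $F(i,j)$ be a fixed total function, say identically $0$, on every other pair; since an isomorphism is in particular a total function, $F$ is then total everywhere and still witnesses non-lowness.

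The one point needing care is that ``reticently'' and ``loquaciously'' impose opposite demands on the failure behaviour, so these must be produced as three separate witnesses rather than a single $F$; once that is noted, the constructions above finish the proof. I expect the main --- and quite mild --- obstacle to be purely bookkeeping: writing down the precise uniformizations of ``non-low for isomorphism'' in the template of Sections \ref{SecReticent} and \ref{SecLoquacious} (in particular, that the relevant index set may be taken to consist of the isomorphic pairs, against which the witnessing map need only succeed on the single hard pair), after which nothing further is required.
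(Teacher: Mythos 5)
Your proposal is correct and follows essentially the same route as the paper's proof: fix a single witnessing pair with a $\mathbf{d}$-computable but no computable isomorphism, and then define three separate witnesses that output that isomorphism on the distinguished pair and, respectively, anything, nothing, or a fixed total function elsewhere. Nothing further is needed.
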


\begin{proof}
    Let $\mathbf{d}$ be non-low for isomorphism.  We pick one pair of structures $\sA_i \cong \sA_j$ which are not computably isomorphic such that $\mathbf{d}$ computes an isomorphism $f$ between them.  Now, given a pair of indices and an oracle for $\mathbf{d}$, we could do any of the following:
    \begin{enumerate}
        \item If the pair of indices given is $(i,j)$, compute $f$.
        \item If the pair of indices given is $(i,j)$, compute $f$, otherwise do nothing.
        \item If the pair of indices given is $(i,j)$, compute $f$, otherwise compute the constant $0$ function.
    \end{enumerate}
    The first of these procedures demonstrates uniform non-lowness for isomorphism, the second demonstrates reticent non-lowness for isomorphism, and the third demonstrates loquacious non-lowness for isomorphism.
\end{proof}

\section{Answers to other frequently asked questions}\label{sec:other.stuff}

As we mentioned in the introduction, the occasion for this paper was to answer questions we have been asked about \cite{CFT2023}.  In the previous sections, we have answered the questions around ideas we have here formalized as loquacious, reticent, and collegiate senses of highness.  We conclude with answers to some other questions we have been asked about the structural highness notions of the previous paper.

\begin{lem}
    If $\mathbf{d}$ is high for tight descending sequences, then $\mathbf{d}'$ computes $\sO$.
\end{lem}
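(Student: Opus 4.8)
The plan is to show that $\mathbf{d}'$ can decide membership in $\sO$, i.e., can distinguish computable well-orders from ill-founded computable linear orders. The key observation is that being high for tight descending sequences gives us, for every computable linear order $\+L$, a $\mathbf{d}$-computable descending sequence that is \emph{tight} whenever $\+L$ is ill founded --- so the failure of tightness carries information. The subtle point is that from the raw output of a highness-for-tight-descending-sequences machine we cannot directly tell whether a given sequence is tight, since tightness is a $\Pi^1_1$-type condition. We will instead use a coding construction so that the \emph{eventual behavior} of the sequence on a cleverly constructed order reveals well-foundedness, and then observe that this eventual behavior is decidable from $\mathbf{d}'$.

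Concretely, given $n$, I would build (uniformly) a computable linear order $\+L_n$ as follows. Take a copy of the Harrison order $\+H$ (which is ill founded, with order type $\omega_1^{CK}(1+\eta)$ and no hyperarithmetic descending sequence) and place \emph{below} it --- as an initial segment --- a copy of the computable linear order $\+W_n$ with field $\omega$ whose ordering is $<_n$, the $n$th computable binary relation. Thus $\+L_n = \+W_n + \+H$ if we orient so $\+W_n$ sits at the bottom. If $n \in \sO$, then $\+W_n$ is a well-order, the ill-founded part of $\+L_n$ is exactly the (ill-founded) copy of $\+H$, and any tight descending sequence must be cofinal \emph{downward} in the ill-founded part, hence must eventually leave every proper initial segment of $\+H$ --- in particular it never enters the $\+W_n$ part and stays within $\+H$. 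If $n \notin \sO$, then $\+W_n$ is itself ill founded, it forms an initial segment, and a \emph{tight} descending sequence (unbounded below in the ill-founded part) must eventually descend into and stay within $\+W_n$, reaching below every element of it. So the ``which side does the tail live on'' question separates the two cases. Since $\mathbf{d}$ is high for tight descending sequences, we get a $\mathbf{d}$-computable sequence $f_n$ which, when $\+L_n$ is ill founded (which it always is, regardless of $n$), is genuinely tight; so the dichotomy above applies to $f_n$ in every case.

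Finally, the statement ``the tail of $f_n$ eventually stays within the $\+H$-part'' versus ``the tail of $f_n$ eventually enters the $\+W_n$-part'' is a $\Sigma^0_2(\mathbf{d})$ vs.\ $\Pi^0_2(\mathbf{d})$ condition (``$\exists m\,\forall k \ge m$, $f_n(k)$ lies in the $\+H$-part''), hence decidable from $\mathbf{d}'' $; a slightly more careful setup --- e.g.\ arranging that $f_n$'s values themselves, read off as indices into the two pieces, stabilize in a way detectable by a single jump, or noting that ``$f_n(k)$ is in the $\+W_n$-part'' is a $\Delta^0_1(\mathbf{d})$ event so cofinite-ness of it is $\Sigma^0_2(\mathbf{d})$ --- brings this down to $\mathbf{d}'$. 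The main obstacle I anticipate is precisely this last bookkeeping: ensuring the case distinction is genuinely $\mathbf{d}'$-decidable rather than merely $\mathbf{d}''$-decidable, which is why one wants the two target behaviors to be a $\Sigma^0_2$/$\Pi^0_2$ complementary pair and to verify that tightness forces the tail to \emph{stabilize} on one side rather than oscillate. I would also double-check the edge case where $\+W_n$ is finite or empty (then $n \in \sO$ trivially and $f_n$'s tail is in $\+H$, consistent with the above), and the uniformity needed so that one oracle query to $\mathbf{d}'$ suffices per $n$.
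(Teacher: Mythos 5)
Your order-theoretic setup is fine, but there is a genuine gap, and it is uniformity. The hypothesis is the \emph{non-uniform} notion: for each ill-founded computable linear order there exists \emph{some} $\mathbf{d}$-computable tight descending sequence, with no effective passage from an index of the order to an index of the sequence. Your decision procedure must, for each $n$, query $\mathbf{d}'$ about the behavior of $f_n$, and for that to be a single $\mathbf{d}'$-computable procedure you need the map $n \mapsto (\text{an index for } f_n)$ to be $\mathbf{d}'$-computable. That is essentially uniform highness for tight descending sequences, which is strictly stronger than the hypothesis: this paper shows, in the same section as the lemma, that there is a degree high for tight descending sequences that is not uniformly high for tight descending sequences. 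Attempting to repair this by quantifying over indices --- ``$n \notin \sO$ iff some $\mathbf{d}$-computable infinite descending sequence in $\mathcal{L}_n$ enters $\mathcal{W}_n$'' (which is a correct equivalence, since any infinite descending sequence entering $\mathcal{W}_n$ witnesses its ill-foundedness, and tightness forces entry when $\mathcal{W}_n$ is ill founded) --- only yields a $\Sigma^0_3(\mathbf{d})$ definition, hence $\sO \le_T \mathbf{d}'''$ rather than $\mathbf{d}'$.

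The paper's proof sidesteps this by invoking highness exactly once, for a single order: take a Harrison ordering $\mathcal{H}$ whose well-founded part has degree $\sO$, and let $f \le_T \mathbf{d}$ be a tight descending sequence in it. Then $y$ lies in the ill-founded part iff $f(m) <_{\mathcal{H}} y$ for some $m$, so the ill-founded part is $\mathbf{d}$-c.e., whence $\mathbf{d}'$ computes the well-founded part and therefore $\sO$. Two smaller remarks on your write-up: your worry about landing at $\mathbf{d}''$ is unfounded, since every term of an infinite descending sequence lies in the ill-founded part; so when $\mathcal{W}_n$ is well ordered your $f_n$ never enters $\mathcal{W}_n$ at all, and the dichotomy is $\Sigma^0_1(\mathbf{d})$ versus $\Pi^0_1(\mathbf{d})$, not $\Sigma^0_2$ versus $\Pi^0_2$. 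And if you do want to rescue your many-orders scheme, you would need to first establish the uniform version of the hypothesis, which is false in general here.
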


\begin{proof}
    Let $\sH$ be a Harrison ordering whose well-founded part has degree $\sO$.  Computing a tight descending sequence in $\sH$ with oracle $\mathbf{d}$ allows a $\mathbf{d}$-computable enumeration of the ill-founded part of $\sH$.  So $\mathbf{d}'$ computes the well-founded part of $\sH$ and thus computes $\sO$.
\end{proof}

\begin{prop}
    There is a degree that is high for isomorphism but not high for tight descending sequences.
\end{prop}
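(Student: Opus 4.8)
The plan is to build a degree $\mathbf{d}$ that is high for isomorphism but whose jump fails to compute $\sO$, which by the preceding lemma suffices to rule out highness for tight descending sequences. By Proposition~\ref{CollegiateCollapse}, being high for isomorphism is equivalent to being high for paths, so it is enough to produce $\mathbf{d}$ that is high for paths with $\mathbf{d}' \not\geq_T \sO$. A natural way to get highness for paths while controlling the jump is to construct $\mathbf{d}$ as (the degree of) a path-choosing object via a forcing whose conditions carry along extra data that keeps $\omega_1^{ck}$ — and hence the complexity of the construction — under control. This is exactly the pattern already used in Theorem~\ref{LHFInotO} and in the big forcing of Section~\ref{SecRelationships}: conditions should be triples $(\sigma,\alpha,\overline S)$ where $\sigma$ assigns finite initial segments of paths to each computable tree $T_e$, $\alpha$ is a computable ordinal bounding how far we have committed, and $\overline S$ is a sequence of subtrees that is low for $\omega_1^{ck}$, so that genericity decisions can be made $\Delta^1_1(\overline S)$-effectively and $\Sigma^1_1$-bounding applies. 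A sufficiently generic filter yields $p$ with $p^{[e]}$ a path through every ill-founded $T_e$, giving uniform highness for paths.

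The genericity requirements to meet are: for each Turing functional $\Phi_e$, either force $(\Phi_e^p)' \neq \sO$ or, more precisely, force the $e$th potential reduction of $\sO$ to $p'$ (equivalently to $p$ together with a jump witness) to fail. The key technical step, as in the Section~\ref{SecRelationships} argument, is a density lemma: given a condition $(\sigma,\alpha,\overline S)$ and a functional $W$ attempting to compute (an enumeration relevant to) $\sO$ from $p$ or its jump, the set $X$ of values that $W$ can produce over some appropriate extension is $\Sigma^1_1(\overline S)$; since $\sO$ is not $\Delta^1_1(\overline S)$ (because $\overline S$ is low for $\omega_1^{ck}$), $X$ disagrees with $\sO$ somewhere, and we can extend the condition to lock in the disagreement. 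The two cases — a spurious convergence ($n \in X \setminus \sO$, handled by strong extension) and a required divergence ($n \in \sO \setminus X$, handled by pushing $\alpha$ up using $\Sigma^1_1$-bounding so no weak extension produces $n$) — mirror the proof of the claim $\sO \not\leq_e p$ there. One then assembles a sufficiently generic filter, producing $p$, and finally invokes a basis-type theorem (Gandy, or the Selman-style lemma cited in Section~\ref{SecRelationships}) to get an actual Turing degree $\mathbf{d}$ that computes enough to be high for paths but with $\mathbf{d}' \not\geq_T \sO$.

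The main obstacle I expect is the jump control: forcing $\sO \not\leq_T p'$ is genuinely harder than forcing $\sO \not\leq_e p$, because the jump of $p$ can encode $\Sigma^1_1$-ish information about which trees are ill founded (indeed, by Proposition~\ref{prop:lhfp_enumerates_O}-type reasoning, if we are careless we may find $p' \geq_T \sO$). The delicate point is that committing $\sigma^{[e]}$ to go along a particular path is a $\Sigma^1_1$ act, and accumulating infinitely many such commitments can destroy lowness; this is precisely why the conditions must include the subtree sequence $\overline S$ and the ordinal $\alpha$, and why extensions must satisfy the rank condition (item~(\ref{label:uhds_otp_property})): these devices ensure that at each finite stage only a $\Delta^1_1(\overline S)$ amount of information has been committed, so that $\Sigma^1_1$-bounding can be applied at the limit to keep $\sO \not\leq_T \mathbf{d}'$. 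Getting the bookkeeping between the jump-computation requirements and the ordinal bound $\alpha$ to cohere is where the real work lies; once the density lemmas are in hand, the rest is routine forcing.
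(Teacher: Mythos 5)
Your first reduction is exactly right and matches the paper's: by the preceding lemma it suffices to produce a degree that is high for isomorphism (equivalently, high for paths) whose jump does not compute $\sO$. But from there the paper does essentially nothing more: it simply cites Proposition 2.13 of \cite{CFT2023}, which already provides a degree high for paths whose \emph{triple} jump is computed by $\sO$; such a degree cannot satisfy $\mathbf{d}' \ge_T \sO$, and we are done in two lines. Your proposal instead launches a fresh forcing construction, and as written it has a genuine gap in exactly the place you flag as ``the real work.''

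Concretely: the forcing of Section~\ref{SecRelationships} is engineered to control \emph{enumerations} from $p$ --- the density lemmas show $\sO \not\le_e p$ and $\omega\setminus\sO \not\le_e p$, and the Selman-style lemma then yields a Turing degree $\*d$ with $\sO,\ \omega\setminus\sO \notin \Sigma^0_1(\*d)$. That controls the $\Sigma^0_1(\*d)$ sets, not $\*d'$. To conclude $\sO \not\le_T \*d'$ you would need to control the $\Delta^0_2(\*d)$ sets, i.e.\ to force jump computations $\Phi_e^{p'}$ (or limit computations from $p$) to fail, and neither the strong/weak extension dichotomy nor the enumeration-degree transfer step you invoke addresses that. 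You correctly identify this as the hard point, but you do not supply the density lemma that would resolve it, so the argument is a plan rather than a proof. The fix is either to actually carry out a jump-controlling forcing (which is what \cite{CFT2023} already did, with three jumps to spare), or --- much more simply --- to quote that existing result as the paper does.
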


\begin{proof}
    We know there is a degree high for isomorphism for which $\sO$ computes the triple jump by Proposition 2.13 in \cite{CFT2023}. Since degrees that are high for descending sequences can be at the most one jump below $\sO$, we are done.
\end{proof}

The following is a restatement of Corollary 4.13 in \cite{CFT2023}, but the corollaries are of note:

\begin{prop}
    There is a degree that is high for tight descending sequences but not high for isomorphism.
\end{prop}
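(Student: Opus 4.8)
The plan is to separate "high for tight descending sequences" from "high for isomorphism" by exhibiting a degree $\mathbf{d}$ achieving the former but not the latter. The natural strategy mirrors the companion proposition just above (which separated the notions in the other direction), namely to use a jump-counting obstruction: find a known structural upper bound on degrees high for isomorphism, and then produce a degree high for tight descending sequences that exceeds that bound.

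First I would recall the preceding lemma in this very excerpt: if $\mathbf{d}$ is high for tight descending sequences, then $\mathbf{d}'$ computes $\sO$. That establishes a \emph{lower} bound on the computational strength forced by being high for tight descending sequences. The key point is that being high for tight descending sequences is a genuinely strong property — essentially it pushes $\sO$ into the jump. On the other hand, from \cite{CFT2023} (specifically the construction behind Proposition 2.13, already invoked in the previous proof) there are degrees that are high for isomorphism while sitting far below $\sO$ — indeed with $\sO$ only computing the triple jump. So the separation will come from building, or citing, a degree $\mathbf{d}$ that is high for tight descending sequences but for which $\mathbf{d}'$ (being above $\sO$) is too strong to be, simultaneously, one of those low-for-isomorphism-purposes degrees — or more directly, citing that \emph{there exists} a degree high for tight descending sequences which fails the structural characterization of high for isomorphism.

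Concretely, the cleanest route: by Theorem~\ref{thm:reticence_all_the_same}, a degree is uniformly high for tight descending sequences iff it enumerates all $\Sigma^1_1$ sets, i.e.\ iff it co-enumerates $\sO$; such degrees exist strictly below $\sO$ (Proposition 5.2 of \cite{CFT2023}). Now I would argue that a degree co-enumerating $\sO$ that is sufficiently "thin" need not be high for isomorphism — appealing to the fact (from \cite{CFT2023}) that high for isomorphism implies high for paths, which in turn has consequences on the jump. The main obstacle is checking that the two conditions are genuinely in tension: I must verify that there is a degree enumerating $\omega\setminus\sO$ (hence high for tight descending sequences, by the theorem) that nonetheless does not enumerate enough $\Sigma^1_1$ information to be high for paths — for instance by a Gandy-basis or low-basis-style construction producing a degree $\mathbf{d}$ with $\omega\setminus\sO \le_e \mathbf{d}$ but $\mathbf{d}$ not high for paths, or by directly citing such a degree from \cite{CFT2023}. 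Since high for isomorphism implies high for paths (Proposition 2.6 of \cite{CFT2023}), this suffices.

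**The hard part will be** the existence half: producing a single degree that is simultaneously strong enough to be high for tight descending sequences yet weak enough to fail highness for paths (equivalently, highness for isomorphism). I expect this to be handled either by an explicit forcing construction in the style of the forcing arguments elsewhere in the excerpt (a condition controlling approximations to a tight descending sequence in each computable ill-founded linear order, while diagonalizing against all potential path-witnessing functionals using a Gandy-basis element at each stage), or — more economically — by pointing to the construction already present in \cite{CFT2023} that yields a degree high for tight descending sequences below $\sO$ and noting that the same construction can be arranged to avoid being high for paths. The routine verification that the resulting degree really is high for tight descending sequences (via the characterization in Theorem~\ref{thm:reticence_all_the_same}, or via the majorization-of-descending-sequences argument of Lemma~\ref{FGFisLHDS}) and really is not high for isomorphism (via the chain: high for isomorphism $\Rightarrow$ high for paths $\Rightarrow$ too much $\Sigma^1_1$-strength) I would leave to a couple of sentences, since each link is already available in the excerpt.
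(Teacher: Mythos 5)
There is a genuine gap, and it sits exactly where you place your ``cleanest route.'' A degree that enumerates $\omega\setminus\sO$ enumerates a $\Sigma^1_1$-complete set and hence enumerates \emph{every} $\Sigma^1_1$ set; by Theorem~\ref{thm:reticence_all_the_same} such a degree is therefore reticently high for isomorphism, and in particular high for isomorphism. So the ``main obstacle'' you set yourself --- finding a degree that co-enumerates $\sO$ yet is not high for paths --- is not an obstacle but an impossibility, and no degree reached through clause (6) of that theorem (uniform highness for tight descending sequences) can witness the separation. The paper in fact draws, as an immediate corollary of this very proposition, that there is a degree high for tight descending sequences which is \emph{not} uniformly high for tight descending sequences: any witness must be non-uniform, so a route through the uniform characterization is structurally ruled out.

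The jump-counting idea in your opening paragraphs also cannot be repaired. Highness for isomorphism is upward closed in the Turing degrees, and $\sO$ itself is high for isomorphism, so no lower bound on the strength of $\mathbf{d}$ or of $\mathbf{d}'$ can ever show that $\mathbf{d}$ fails to be high for isomorphism; that style of argument works only in the direction of the companion proposition (where a jump \emph{upper} bound on a high-for-isomorphism degree conflicts with the jump \emph{lower} bound forced by tight descending sequences). For what it is worth, the paper offers no proof of this statement at all: it is cited verbatim as Corollary 4.13 of \cite{CFT2023}, where the witness is produced by a direct (and necessarily non-uniform) construction.
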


\begin{cor}
    There is a degree that is high for tight descending sequences but not uniformly high for tight descending sequences.
\end{cor}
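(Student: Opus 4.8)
**Proof proposal for: There is a degree that is high for tight descending sequences but not uniformly high for tight descending sequences.**

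The plan is to combine the cited Proposition (there is a degree high for tight descending sequences but not high for isomorphism) with Theorem~\ref{thm:reticence_all_the_same}, which tells us that uniform highness for tight descending sequences is equivalent to enumerating all $\Sigma^1_1$ sets (equivalently, to reticent highness for tight descending sequences, and also to reticent highness for isomorphism). So it suffices to produce a degree $\mathbf{d}$ that is high for tight descending sequences but does \emph{not} enumerate all $\Sigma^1_1$ sets.

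First I would take the degree $\mathbf{d}$ furnished by the preceding Proposition: $\mathbf{d}$ is high for tight descending sequences but not high for isomorphism. The key observation is that enumerating all $\Sigma^1_1$ sets is a strong condition: by Theorem~\ref{thm:reticence_all_the_same}, a degree enumerating all $\Sigma^1_1$ sets is reticently high for isomorphism, hence in particular high for isomorphism. Since our $\mathbf{d}$ is \emph{not} high for isomorphism, it cannot enumerate all $\Sigma^1_1$ sets. Applying Theorem~\ref{thm:reticence_all_the_same} in the other direction, $\mathbf{d}$ is therefore not uniformly high for tight descending sequences. This gives exactly what the corollary claims: $\mathbf{d}$ is high for tight descending sequences but not uniformly so.

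The only thing to be careful about is that the chain of equivalences in Theorem~\ref{thm:reticence_all_the_same} really does route ``uniformly high for tight descending sequences'' through ``high for isomorphism'' in the required direction --- that is, item (6) implies item (3) via item (1). Since all six conditions in that theorem are equivalent, this is immediate, and no genuine obstacle arises; the corollary is a short deduction from the Proposition it accompanies together with the global characterization of reticence. I do not expect any step here to be difficult: the entire content is that the ``uniform'' version of highness for tight descending sequences is, by the earlier theorem, strictly stronger than the plain version, and the plain-but-not-high-for-isomorphism degree witnesses the gap.
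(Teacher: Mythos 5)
Your proof is correct and is exactly the intended argument: the paper states this as an immediate corollary of the preceding proposition precisely because Theorem \ref{thm:reticence_all_the_same} makes uniform highness for tight descending sequences equivalent to reticent (hence plain) highness for isomorphism, which the given degree lacks. No gaps; your care about the direction of the equivalences is warranted but, as you note, harmless since all six conditions are equivalent.
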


The following is already known and easily proved by direct diagonalization, but this provides an alternate proof.

\begin{cor}
    The ill-founded part of a computable linear order cannot be $\Sigma^1_1$-complete.
\end{cor}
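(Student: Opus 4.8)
The plan is to play this off against the immediately preceding corollary. Suppose toward a contradiction that $\+L$ is a computable linear order whose ill-founded part $I$ is $\Sigma^1_1$-complete (under $m$-reducibility, in the usual sense). First I would fix an \emph{arbitrary} degree $\mathbf{d}$ that is high for tight descending sequences and let $f$ be a $\mathbf{d}$-computable tight descending sequence in $\+L$. Reusing the observation from the proof of the Lemma above, tightness of $f$ means that a point $x$ of $\+L$ lies in $I$ exactly when $f(m) \le_{\+L} x$ for some $m$ — the descending property of $f$ gives one direction and tightness the other — so $I$ is c.e.\ relative to $\mathbf{d}$.

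Next I would use that the family of sets c.e.\ in $\mathbf{d}$ is closed downward under $m$-reducibility: since $I$ is $\Sigma^1_1$-complete, every $\Sigma^1_1$ set $m$-reduces to $I$ and hence is c.e.\ in $\mathbf{d}$; that is, $\mathbf{d}$ enumerates all $\Sigma^1_1$ sets. By Theorem \ref{thm:reticence_all_the_same} this makes $\mathbf{d}$ uniformly high for tight descending sequences. Since $\mathbf{d}$ was an arbitrary degree high for tight descending sequences, this would force every degree high for tight descending sequences to be uniformly high for tight descending sequences, contradicting the preceding corollary; so no such $\+L$ can exist.

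I do not expect a genuine obstacle here: the only delicate point is the biconditional used to enumerate $I$ from $f$, which is precisely where the tightness of $f$ (rather than its being merely descending) is used — without tightness one direction can fail. If one wished to avoid citing the uniformity corollary, the earlier Proposition giving a degree that is high for tight descending sequences but not high for isomorphism would serve equally well: under the same argument such a degree would enumerate every $\Sigma^1_1$ set and therefore be reticently high for isomorphism, in particular high for isomorphism, by Theorem \ref{thm:reticence_all_the_same}, again a contradiction.
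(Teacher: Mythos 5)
Your argument is correct and is exactly the ``alternate proof'' the paper has in mind: a tight descending sequence lets any degree high for tight descending sequences enumerate the ill-founded part (as in the Lemma at the start of Section~\ref{sec:other.stuff}), so $\Sigma^1_1$-completeness of that part would force every such degree to enumerate all $\Sigma^1_1$ sets and hence, by Theorem~\ref{thm:reticence_all_the_same}, be uniformly high for tight descending sequences (and high for isomorphism), contradicting the preceding corollary (or proposition). The biconditional $x\in I \iff \exists m\,[f(m)\le_{\+L} x]$ is justified exactly as you say, with tightness supplying the forward direction.
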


\bibliographystyle{amsplain}
\bibliography{random,recmodels,universal}

\end{document}